\newtheorem{theorem}{Theorem}[section]
\newtheorem{lemma}[theorem]{Lemma}
\newtheorem{proposition}[theorem]{Proposition}
\newtheorem{corollary}[theorem]{Corollary}
\newtheorem{claim}[theorem]{Claim}
\newtheorem{subclaim}[theorem]{Subclaim}
\newtheorem{fact}[theorem]{Fact}
\newtheorem{question}[theorem]{Question}
\theoremstyle{definition}
\newtheorem{definition}[theorem]{Definition}
\newtheorem{remark}[theorem]{Remark}
\newcommand{\cf}{\mathrm{cf}}
\newcommand{\bb}{\mathbb}
\newcommand{\bbm}{\mathbbm}
\begin{document}
\title{Squares and narrow systems}
\author{Chris Lambie-Hanson}
\address{Department of Mathematics, Bar-Ilan University \\ Ramat Gan, 5290002, Israel}
\email{lambiec@macs.biu.ac.il}
\thanks{This research was undertaken while the author was a Lady Davis Postdoctoral Fellow. 
The author would like to thank the Lady Davis Fellowship Trust and the Hebrew University of Jerusalem. 
The author would also like to thank Menachem Magidor for many helpful discussions and the 
anonymous referee for a number of useful suggestions.}
\keywords{tree property, narrow systems, square principles, large cardinals, Proper Forcing Axiom}
\subjclass[2010]{Primary 03E35. Secondary 03E05, 03E55.}
\begin{abstract}
	A narrow system is a combinatorial object introduced by Magidor and Shelah in connection with work on the tree property at successors of singular cardinals. In analogy to the tree property, a cardinal $\kappa$ satisfies the \emph{narrow system property} if every narrow system of height $\kappa$ has a cofinal branch. In this paper, we study connections between the narrow system property, square principles, and forcing axioms. We prove, assuming large cardinals, both that it is consistent that $\aleph_{\omega+1}$ satisfies the narrow system property and $\square_{\aleph_{\omega}, < \aleph_{\omega}}$ holds and that it is consistent that every regular cardinal satisfies the narrow system property. We introduce natural strengthenings of classical square principles and show how they can be used to produce narrow systems with no cofinal branch. Finally, we show that the Proper Forcing Axiom implies that every narrow system of countable width has a cofinal branch but is consistent with the existence of a narrow system of width $\omega_1$ with no cofinal branch.
\end{abstract}
\maketitle

\section{Introduction}

The question as to when certain large cardinal properties can hold at accessible cardinals has been of considerable interest in modern set theory. Of particular interest are successors of singular cardinals (particularly $\aleph_{\omega + 1}$), at which these properties are typically more difficult and require larger cardinals to attain than at successors of regular cardinals. One of the large cardinal properties that has received a great deal of attention is the tree property. In \cite{magidorshelah}, Magidor and Shelah prove that the tree property holds at the successor of a singular limit of strongly compact cardinals and that, assuming large cardinals (roughly a huge cardinal with $\omega$-many supercompact cardinals above it), it is consistent that the tree property holds at $\aleph_{\omega + 1}$. In the same paper, they introduce the notion of a narrow system, which has proved to be a valuable tool in the analysis of the tree property at successors of singular cardinals and is the primary subject of this paper. In \cite{sinapova}, Sinapova reduces the large cardinals needed to obtain the tree property at $\aleph_{\omega + 1}$ by forcing it from $\omega$-many supercompact cardinals. In \cite{neeman}, Neeman shows the consistency of the tree property holding simultaneously at $\aleph_{\omega + 1}$ and $\aleph_{n}$ for all $2 \leq n < \omega$ and, in the process, demonstrates a different method for forcing the tree property at $\aleph_{\omega + 1}$ from $\omega$-many supercompact cardinals.

The forcing constructions employed by Magidor and Shelah, Sinapova, and Neeman are all distinct, but, in all known models in which the tree property holds at the successor of a singular cardinal, $\mu$, the verification of the tree property follows the same general two-step pattern. In the first step, it is argued that every $\mu^+$-tree admits a narrow system of height $\mu^+$ (a precise definition of this will be given later). In the second step, it is argued that every narrow system of height $\mu^+$ has a cofinal branch. With this in mind and with an eye towards getting a better understanding of matters surrounding the tree property, we focus our attention here on these two steps, and in particular on the latter, taken individually.

The general structure of the paper is as follows. In Section \ref{narrowSystemSect}, we introduce narrow systems and prove some basic facts about them. In Section \ref{forcingSect}, we recall some combinatorial and forcing notions that will be useful throughout the paper. In Section \ref{preservationSect}, we present branch preservation lemmas for narrow systems. These are slight improvements on a similar lemma of Sinapova from \cite{sinapova}. In Section \ref{weakSquareSect}, we present some forcing constructions related to the narrow system property. In particular, starting from large cardinals, we obtain a model in which every narrow system has a cofinal branch, a model in which the narrow system property at $\aleph_{\omega + 1}$ and $\square_{\aleph_\omega, < \aleph_\omega}$ both hold, and a model in which there is an inaccessible, non-weakly compact $\lambda$ such that the narrow system property holds at $\lambda$. In Section \ref{counterexampleSect}, we demonstrate how branchless narrow systems can be constructed from certain strengthenings of classical square principles. In Section \ref{indexedSquareSect}, we demonstrate how to force some of these square principles and, in Section \ref{separatingSect}, we demonstrate how to separate certain of them from one another. In Section \ref{derivedSect}, we discuss derived systems and use them to get finer control over the failure of the narrow system property. In Section \ref{pfaSect}, we show that the Proper Forcing Axiom implies that every narrow system with countable width has a cofinal branch but has no effect on narrow systems with uncountable width. At the end, we present some open questions.

Our notation is for the most part standard. The reference for all undefined notions and notations is \cite{jech}. If $A$ is a set of ordinals, then $\mathrm{otp}(A)$ denotes the order type of $A$ and $A'$ denotes the set of limit points of $A$, i.e. $\{\alpha \in A \mid \sup(A \cap \alpha) = \alpha\}$. If $X$ is a set and $\kappa$ is a cardinal, then $[X]^\kappa = \{Y \subseteq X \mid |Y| = \kappa \}$. If $\kappa < \lambda$ are regular cardinals, then $S^\lambda_\kappa = \{\alpha < \lambda \mid \cf(\alpha) = \kappa \}$, and $\mathrm{cof}(\kappa)$ denotes the class of ordinals of cofinality $\kappa$. $\mathrm{On}$ denotes the class of all ordinals. If $\lambda$ is an uncountable, regular cardinal, $T$ is a $\lambda$-tree, and $\alpha < \lambda$, then we will assume that level $\alpha$ of $T$ is $\{\alpha\} \times \kappa_\alpha$, where $\kappa_\alpha < \lambda$. In particular, if $\lambda = \mu^+$, then, for all $0 < \alpha < \lambda$, we will assume that level $\alpha$ of $T$ is $\{\alpha\} \times \mu$. The tree relation for a tree $T$ will typically be denoted by $<_T$. If $R$ is a binary relation, then we will typically write $a <_R b$ to stand for $(a, b) \in R$. 
\section{Narrow systems} \label{narrowSystemSect}

\begin{definition}
	Let $R$ be a binary relation on a set $X$.
	\begin{itemize}
		\item{If $a,b \in X$, then $a$ and $b$ are \emph{$R$-comparable} if $a = b$, $a <_R b$, or $b <_R a$. Otherwise, $a$ and $b$ are \emph{$R$-incomparable}, which is denoted $a \perp_R b$.}
		\item{$R$ is \emph{tree-like} if, for all $a,b,c \in X$, if $a <_R c$ and $b <_R c$, then $a$ and $b$ are $R$-comparable.}
	\end{itemize}
\end{definition}

We now recall the notion of a $\lambda$-system, introduced in \cite{magidorshelah}.

\begin{definition} \label{system_def}
  Let $\lambda$ be an infinite, regular cardinal. $S = \langle \bigcup_{\alpha \in I}\{\alpha\} \times \kappa_\alpha, \mathcal{R} \rangle$ is a \emph{$\lambda$-system} if:
	\begin{enumerate}
		\item{$I \subseteq \lambda$ is unbounded and, for all $\alpha \in I$, $\kappa_\alpha$ is a cardinal such that $0 < \kappa_\alpha < \lambda$. We sometimes slightly 
        abuse notation and write $S$ to denote its underlying set, i.e. $\bigcup_{\alpha \in I}\{\alpha\} \times \kappa_\alpha$. For example, 
        we will write $u \in S$ instead of $u \in \bigcup_{\alpha \in I}\{\alpha\} \times \kappa_\alpha$.
        For each $\alpha \in I$, we say that $S_\alpha := \{\alpha\} \times \kappa_\alpha$ is the $\alpha^{\mathrm{th}}$ level of $S$. Similarly, 
        if $\vartriangleleft$ is one of $<, \leq, >,$ or $\geq$ and $\beta < \lambda$, then $S_{\vartriangleleft \beta}:= 
        \bigcup\{\{\alpha\} \times \kappa_\alpha \mid \alpha \in I$ and $\alpha \vartriangleleft \beta\}$.}
		\item{$\mathcal{R}$ is a set of binary, transitive, tree-like relations on $S$ and $0 < |\mathcal{R}| < \lambda$;}
		\item{for all $R \in \mathcal{R}$, $\alpha_0, \alpha_1 \in I$, $\beta_0 < \kappa_{\alpha_0}$, and $\beta_1 < \kappa_{\alpha_1}$, if $(\alpha_0, \beta_0) <_R (\alpha_1,\beta_1)$, then $\alpha_0 < \alpha_1$;}
		\item{for all $\alpha_0 < \alpha_1$, both in $I$, there are $\beta_0 < \kappa_{\alpha_0}$, $\beta_1 < \kappa_{\alpha_1}$, and $R \in \mathcal{R}$ such that $(\alpha_0, \beta_0) <_R (\alpha_1, \beta_1)$.}
	\end{enumerate}

	If $S = \langle \bigcup_{\alpha \in I}\{\alpha\} \times \kappa_\alpha, \mathcal{R} \rangle$ is a $\lambda$-system, then we define 
  $\mathrm{width}(S) = \max(\sup(\{\kappa_\alpha \mid \alpha \in I\}), |\mathcal{R}|)$ and $\mathrm{height}(S) = \lambda$. $S$ is a \emph{narrow $\lambda$-system} if $\mathrm{width}(S)^+ < \lambda$.

	$S$ is a \emph{strong $\lambda$-system} if it satisfies the following strengthening of (4):
	\begin{enumerate}
    \item[($4'$)]{for all $\alpha_0 < \alpha_1$, both in $I$, and for every $\beta_1 < \kappa_{\alpha_1}$, there are $\beta_0 < \kappa_{\alpha_0}$ and $R \in \mathcal{R}$ such that $(\alpha_0, \beta_0) <_R (\alpha_1,\beta_1)$.}
	\end{enumerate}

	If $R \in \mathcal{R}$, a \emph{branch of $S$ through $R$} is a set $b \subseteq S$ such that for all $u,v \in b$, $u$ and $v$ are $R$-comparable. $b$ is a \emph{cofinal branch} if, for unboundedly many $\alpha \in I$, $b \cap S_\alpha \not= \emptyset$.
\end{definition}

\begin{remark}
  If $\lambda$ is a successor cardinal and $S = \langle \bigcup_{\alpha \in I}\{\alpha\} \times \kappa_\alpha, \mathcal{R} \rangle$ is a $\lambda$-system, or if $\kappa$ is weakly inaccessible and $S$ is a narrow $\lambda$-system, then there is an unbounded $J \subseteq I$ and a $\kappa < \lambda$ such that, for all $\alpha \in J$, $\kappa_\alpha = \kappa$. It will then be sufficient for us to work with subsystems of the form $\langle J \times \kappa, \mathcal{R} \rangle$, so, in the case that $\lambda$ is a successor cardinal or we are considering only narrow systems, we will assume our systems are of this form. In addition, if $S$ is a $\lambda$-system, then we will write $\mathcal{R}(S)$ to refer to the set of relations of $S$.
\end{remark}

\begin{definition}
	Let $\lambda$ be a regular cardinal, and let $T$ be a $\lambda$-tree. $T$ \emph{admits a narrow system} if there is an unbounded $I \subseteq \lambda$ and a $\kappa$ with $\kappa^+ < \lambda$ such that $\langle I \times \kappa, \{<_T\} \rangle$ is a system.
\end{definition}

\begin{remark}
	Note that, as $T$ is a tree, verifying that $\langle I \times \kappa, \{<_T\} \rangle$ in the above definition is a system 
  amounts to checking condition (4) in Definition \ref{system_def}.
\end{remark}

We will be interested in statements asserting that all narrow systems of a certain shape have a cofinal branch. 
We first show that, when verifying that all narrow systems of a given height and width have a cofinal branch, 
it suffices to consider systems having a single relation.

\begin{proposition} \label{singlerelation}
	Let $\lambda$ be an uncountable, regular cardinal, and suppose $S = \langle I \times \kappa, \mathcal{R} \rangle$ is a $\lambda$-system with no cofinal branch. 
  Suppose $\mathrm{width}(S) = \kappa'$. Then there is a $\lambda$-system $S' = \langle I \times \kappa', \mathcal{R}' \rangle$ 
  with no cofinal branch such that $|\mathcal{R}'| = 1$.
\end{proposition}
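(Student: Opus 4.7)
The plan is to encode all the relations of $S$ into a single tree-like relation on $I \times \kappa'$ by using the second coordinate of each node both to remember a ``fiber'' of $S$ and to tag which relation of $\mathcal{R}$ witnesses connectivity along that fiber. Since $\kappa' = \max(\kappa, |\mathcal{R}|)$ is at least $\aleph_0$ in the nontrivial case (finite-width $\lambda$-systems for $\lambda$ uncountable regular can be handled by a direct pigeonhole), we have $\kappa' \cdot |\mathcal{R}| = \kappa'$ and can fix a bijection $\pi : \kappa' \to \kappa \times |\mathcal{R}|$. Enumerate $\mathcal{R} = \{R_i \mid i < |\mathcal{R}|\}$.

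Define a single relation $R^\ast$ on $I \times \kappa'$ by declaring $(\alpha_0, \beta_0) <_{R^\ast} (\alpha_1, \beta_1)$ iff there exist $i < |\mathcal{R}|$ and $\gamma_0, \gamma_1 < \kappa$ with $\pi(\beta_0) = (\gamma_0, i)$, $\pi(\beta_1) = (\gamma_1, i)$, and $(\alpha_0, \gamma_0) <_{R_i} (\alpha_1, \gamma_1)$. Then set $\mathcal{R}' = \{R^\ast\}$ and $S' = \langle I \times \kappa', \mathcal{R}'\rangle$. I would verify the four clauses of Definition \ref{system_def} in turn. Transitivity of $R^\ast$ follows from transitivity of each $R_i$, since all three $\pi$-images must share the same tag $i$. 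Tree-likeness is the key check: if $(\alpha_0, \beta_0) <_{R^\ast} (\alpha_2, \beta_2)$ and $(\alpha_1, \beta_1) <_{R^\ast} (\alpha_2, \beta_2)$, then both relations are witnessed by the same $i$ (namely the second coordinate of $\pi(\beta_2)$), so tree-likeness of $R_i$ transfers to $R^\ast$. Clause (3) is immediate. Clause (4) is inherited by pulling back, through $\pi^{-1}$, the $R_i$-related pair of nodes provided by clause (4) for $S$.

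For the branch property, suppose toward contradiction that $b \subseteq I \times \kappa'$ is a cofinal branch of $S'$ through $R^\ast$. Fix any $(\alpha, \beta) \in b$, let $\pi(\beta) = (\gamma, i)$, and observe that for every other $(\alpha^*, \beta^*) \in b$, $R^\ast$-comparability with $(\alpha, \beta)$ forces $\pi(\beta^*)$ to have second coordinate $i$ as well. Writing $\pi(\beta^*) = (\gamma^*, i)$, the set $b^* := \{(\alpha^*, \gamma^*) \mid (\alpha^*, \pi^{-1}(\gamma^*, i)) \in b\}$ is then, by unwinding the definition of $R^\ast$, a cofinal branch of $S$ through $R_i$, contradicting the hypothesis on $S$.

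The main technical point I expect to be delicate is the tree-likeness verification, because it is what forces every member of a hypothetical cofinal branch of $S'$ to live in a single ``$R_i$-fiber'' and hence lets us reduce a cofinal branch of $S'$ to one of $S$; everything else is essentially bookkeeping via the bijection $\pi$.
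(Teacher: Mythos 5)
Your proposal is correct and is essentially the paper's own proof: the paper likewise fixes a bijection $\pi : \kappa' \rightarrow \kappa \times |\mathcal{R}|$, defines a single relation by requiring the two nodes to carry the same relation-tag in their $\pi$-images and to be related under the corresponding $R_i$, and pulls any cofinal branch of $S'$ back to a cofinal branch of $S$ (the paper leaves these verifications, which you spell out, as ``easily verified''). Your parenthetical handling of the finite-width case addresses a degenerate situation the paper silently ignores (a bijection $\pi$ exists only when $\kappa'$ is infinite), though a bare pigeonhole is not quite enough there -- one would use a uniform ultrafilter on $\lambda$ together with tree-likeness to see that finite-width systems always have cofinal branches, making the hypothesis vacuous.
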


\begin{proof}
	Let $\kappa_0 = |\mathcal{R}|$, and enumerate $\mathcal{R}$ as $\langle R_\xi \mid \xi < \kappa_0 \rangle$. 
  Fix a bijection $\pi : \kappa' \rightarrow \kappa \times \kappa_0$. For $\beta < \kappa'$, denote $\pi(\beta)$ 
  as $(\beta^0, \beta^1)$. Let $\mathcal{R}' = \{R\}$, and define the system $S'$ by letting 
  $(\alpha_0, \beta_0) <_R (\alpha_1, \beta_1)$ iff $\beta^1_0 = \beta^1_1$ and, in $S$, 
  $(\alpha_0, \beta^0_0) <_{R_{\beta^1_0}} (\alpha_1, \beta^0_1)$. It is easily verified that $S'$ is a 
  $\lambda$-system and that a cofinal branch through $S'$ would give rise to a cofinal branch through $S$.
\end{proof}

\begin{definition}
	Let $\kappa \leq \lambda$ be infinite cardinals. The \emph{$(\kappa, \lambda)$-narrow system property} (abbreviated $NSP(\kappa, \lambda)$) holds if every narrow system of width $<\kappa$ and height $\lambda$ has a cofinal branch. The \emph{$(\kappa, \geq \lambda)$-narrow system property} ($NSP(\kappa, \geq \lambda)$) holds if every narrow system of width $<\kappa$ and height a regular cardinal $\geq \lambda$ has a cofinal branch.

	If $\lambda$ is a regular, uncountable cardinal, then the \emph{narrow system property} holds at $\lambda$ (abbreviated $NSP(\lambda)$) if every narrow system of height $\lambda$ has a cofinal branch. Note that this is the same as $NSP(\lambda, \lambda)$.
\end{definition}

The reader may be wondering why we are focusing on narrow systems, for which we require $\mathrm{width}(S)^+ < \mathrm{height}(S)$, 
rather than adopting the seemingly more natural requirement of $\mathrm{width}(S) < \mathrm{height}(S)$. One reason for this is that 
the analogue of the narrow system property for systems satisfying $\mathrm{width}(S) < \mathrm{height}(S)$ is inconsistent: as will be 
shown in Proposition \ref{wide_system_prop}, it is a theorem of ZFC that, for every infinite cardinal $\kappa$, there is a $\kappa^+$-system 
of width $\kappa$ with no cofinal branch.

\begin{remark} \label{singlerelationremark}
  Note that, by Proposition \ref{singlerelation}, if $\kappa \leq \lambda$ are infinite cardinals and there is a counterexample to $NSP(\kappa, \lambda)$, 
  then there is a counterexample $S$ such that $|\mathcal{R}(S)| = 1$. Therefore, in order to verify $NSP(\kappa, \lambda)$, it suffices to verify that 
  every narrow $\lambda$-system $S$ such that $\mathrm{width}(S)<\kappa$ and $|\mathcal{R}(S)| = 1$ has a cofinal branch.
\end{remark}

\begin{proposition} \label{wcprop}
	If $\lambda$ is weakly compact, then $NSP(\lambda)$ holds.
\end{proposition}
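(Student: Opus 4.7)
The plan is to reduce to a single-relation system and then apply the partition property characterization of weak compactness. By Remark \ref{singlerelationremark}, it suffices to consider a narrow $\lambda$-system of the form $S = \langle I \times \kappa, \{R\} \rangle$ with $\kappa^+ < \lambda$. Since $\lambda$ is weakly compact it is in particular inaccessible, so $\kappa \times \kappa$ has cardinality less than $\lambda$, and $\lambda$ satisfies the partition relation $\lambda \to (\lambda)^2_\mu$ for every $\mu < \lambda$; the proof will use $\lambda \to (\lambda)^2_{\kappa \times \kappa}$.

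For each pair $\alpha_0 < \alpha_1$ in $I$, property (4) of Definition \ref{system_def} yields a pair $(\beta_0, \beta_1) \in \kappa \times \kappa$ with $(\alpha_0, \beta_0) <_R (\alpha_1, \beta_1)$; choose one such pair and set $c(\alpha_0, \alpha_1) := (\beta_0, \beta_1)$. Applying the partition relation to $c : [I]^2 \to \kappa \times \kappa$ produces an unbounded $c$-homogeneous $H \subseteq I$ with constant value $(\beta_0^*, \beta_1^*)$.

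The claim is that $b := \{(\alpha, \beta_0^*) \mid \alpha \in H\}$ is a cofinal branch of $S$ through $R$. Cofinality is immediate because $H$ is unbounded in $\lambda$. To verify that $b$ is an $R$-chain, fix $\alpha_0 < \alpha_1$ in $H$ and pick any $\alpha_2 \in H$ above $\alpha_1$, which exists because $H$ is unbounded. Homogeneity of $H$ gives both $(\alpha_0, \beta_0^*) <_R (\alpha_2, \beta_1^*)$ and $(\alpha_1, \beta_0^*) <_R (\alpha_2, \beta_1^*)$, so tree-likeness of $R$ forces $(\alpha_0, \beta_0^*)$ and $(\alpha_1, \beta_0^*)$ to be $R$-comparable, and axiom (3), which requires $<_R$ to strictly raise levels, then pins down $(\alpha_0, \beta_0^*) <_R (\alpha_1, \beta_0^*)$.

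The main subtlety is this last step: the homogeneous color $(\beta_0^*, \beta_1^*)$ need not satisfy $\beta_0^* = \beta_1^*$, so homogeneity a priori only supplies mixed-index relations between consecutive levels of $H$. The three-level argument, which invokes both tree-likeness and the level-strictness built into axiom (3), is precisely what is needed to extract an honest branch whose elements all share the single second coordinate $\beta_0^*$.
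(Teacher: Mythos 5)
Your proof is correct and follows essentially the same route as the paper's: reduce to a single relation via Remark \ref{singlerelationremark}, color pairs from $I$ by witnesses to condition (4), apply $\lambda \rightarrow (\lambda)^2_\kappa$, and use tree-likeness on a three-element configuration from the homogeneous set to extract the branch $\{(\alpha,\beta^*_0) \mid \alpha \in H\}$. The only difference is cosmetic: you spell out the appeal to condition (3) to rule out $(\alpha_1,\beta^*_0) <_R (\alpha_0,\beta^*_0)$, a step the paper leaves implicit.
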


\begin{proof}
	Suppose $\lambda$ is weakly compact, $\kappa < \lambda$, and $S = \langle I \times \kappa, \mathcal{R} \rangle$ is a $\lambda$-system. We will show that $S$ has a cofinal branch. 
  By Remark \ref{singlerelationremark}, we may assume that $\mathcal{R} = \{R\}$. We define a function $f:[I]^2 \rightarrow \kappa \times \kappa$ as follows. For every $\alpha_0 < \alpha_1$ with $\alpha_0, \alpha_1 \in I$, find $\beta, \gamma \in \kappa$ such that $(\alpha_0, \beta) <_R (\alpha_1, \gamma)$ and let $f(\{\alpha_0, \alpha_1\}) = (\beta, \gamma)$. Since $\lambda$ is weakly compact, $\lambda \rightarrow (\lambda)^2_\kappa$, so there are an unbounded $H \subseteq I$ and $\beta^*, \gamma^* < \kappa$ such that, for all $\alpha_0 < \alpha_1$ with $\alpha_0, \alpha_1 \in H$, $f(\{\alpha_0, \alpha_1\}) = (\beta^*, \gamma^*)$.

	Now let $\alpha_0 < \alpha_1 < \alpha_2$, with all three in $H$. $(\alpha_0, \beta^*) <_R (\alpha_2, \gamma^*)$ and $(\alpha_1, \beta^*) <_R (\alpha_2, \gamma^*)$, so, since $R$ is tree-like, $(\alpha_0, \beta^*) <_R (\alpha_1, \beta^*)$. Thus, $\{(\alpha, \beta^*) \mid \alpha \in H\}$ is a cofinal branch through $S$.
\end{proof}

\begin{proposition}
	If $\lambda$ is strongly compact, then $NSP(\lambda, \geq \lambda)$ holds.
\end{proposition}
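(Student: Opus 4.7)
The plan is to run the argument of Proposition \ref{wcprop} with the partition relation $\lambda \to (\lambda)^2_\kappa$ replaced by an ultrafilter pigeonhole. Fix a regular $\mu \geq \lambda$ and a narrow $\mu$-system $S$; by Remark \ref{singlerelationremark} we may assume $S = \langle I \times \kappa, \{R\} \rangle$ with $\kappa < \lambda$ and $I \subseteq \mu$ unbounded.

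First I would produce an appropriate ultrafilter. The filter on $I$ consisting of subsets whose complement in $I$ is bounded below $\mu$ is $\mu$-complete by regularity of $\mu$, hence $\lambda$-complete. Strong compactness of $\lambda$ then extends it to a $\lambda$-complete ultrafilter $U$ on $I$; every member of $U$ is thus unbounded in $\mu$.

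Next I would imitate the weakly compact proof. For each $\alpha \in I$ and each $(\beta, \gamma) \in \kappa \times \kappa$, let
\[
A^\alpha_{\beta, \gamma} = \{\alpha' \in I \mid \alpha' > \alpha \text{ and } (\alpha, \beta) <_R (\alpha', \gamma)\}.
\]
Condition (4) of Definition \ref{system_def} says that these sets cover the tail of $I$ above $\alpha$, which lies in $U$, so $\lambda$-completeness together with $|\kappa \times \kappa| < \lambda$ yields some pair $(\beta(\alpha), \gamma(\alpha))$ with $A^\alpha_{\beta(\alpha), \gamma(\alpha)} \in U$. A second application of $\lambda$-completeness, to the function $\alpha \mapsto (\beta(\alpha), \gamma(\alpha))$, produces $(\beta^*, \gamma^*)$ and $B \in U$ such that $A^\alpha_{\beta^*, \gamma^*} \in U$ for every $\alpha \in B$.

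Finally, I expect $b = \{(\alpha, \beta^*) \mid \alpha \in B\}$ to be a cofinal branch: for $\alpha_0 < \alpha_1$ in $B$, the intersection $A^{\alpha_0}_{\beta^*, \gamma^*} \cap A^{\alpha_1}_{\beta^*, \gamma^*}$ lies in $U$ and so is nonempty, and any $\alpha_2$ in it gives $(\alpha_0, \beta^*), (\alpha_1, \beta^*) <_R (\alpha_2, \gamma^*)$; tree-likeness together with $\alpha_0 < \alpha_1$ then forces $(\alpha_0, \beta^*) <_R (\alpha_1, \beta^*)$, and cofinality follows from $B \in U$. The main point where strong compactness (rather than mere weak compactness of $\lambda$) is needed is the filter-extension step: when $\mu > \lambda$ there is no guarantee of a suitable partition relation at $\mu$, but the $\lambda$-complete ultrafilter on $I$ supplied by strong compactness is exactly the right substitute.
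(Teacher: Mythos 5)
Your proof is correct and is essentially the paper's own argument: the paper likewise takes a $\lambda$-complete ultrafilter (on $\mu$, containing $I$ and all co-bounded sets) supplied by strong compactness, uses $\lambda$-completeness twice to stabilize a pair $(\beta^*,\gamma^*)$ with the sets $A_\alpha \in U$, and then derives comparability of $(\alpha_0,\beta^*)$ and $(\alpha_1,\beta^*)$ from tree-likeness at a common upper witness $\alpha_2 \in A_{\alpha_0} \cap A_{\alpha_1}$. Your only deviations are cosmetic---placing the ultrafilter on $I$ rather than $\mu$, and replacing the paper's choice function $f:[I]^2 \rightarrow \kappa \times \kappa$ with the covering family $A^\alpha_{\beta,\gamma}$---and both steps are verified correctly.
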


\begin{proof}
	Suppose $\lambda$ is strongly compact, $\kappa < \lambda$, $\mu \geq \lambda$ is a regular cardinal, and $S = \langle I \times \kappa, \mathcal{R} \rangle$ is a $\mu$-system. We may again assume that $\mathcal{R} = \{R\}$. Since $\lambda$ is strongly compact, every $\lambda$-complete filter can be extended to a $\lambda$-complete ultrafilter. Thus, let $U$ be a $\lambda$-complete ultrafilter over $\mu$ containing $I$ and all co-bounded subsets of $\mu$. As in the proof of Proposition \ref{wcprop}, define $f:[I]^2 \rightarrow \kappa \times \kappa$ so that, if $\alpha_0 < \alpha_1$ are in $I$ and $f(\{\alpha_0, \alpha_1\}) = (\beta, \gamma)$, then $(\alpha_0, \beta) <_R (\alpha_1, \gamma)$.

	For each $\alpha \in I$, use the $\lambda$-completeness of $U$ to find $\beta_\alpha, \gamma_\alpha < \kappa$ such that $A_\alpha := \{\alpha' \in I \setminus (\alpha + 1) \mid f(\{\alpha, \alpha'\}) = (\beta_\alpha, \gamma_\alpha)\} \in U$. Again using the $\lambda$-completeness of $U$, find $B \subseteq I$ and $\beta^*, \gamma^* < \kappa$ such that $B \in U$ and, for all $\alpha \in B$, $(\beta_\alpha, \gamma_\alpha) = (\beta^*, \gamma^*)$. Let $\alpha_0 < \alpha_1$ be in $B$, and let $\alpha_2 \in A_{\alpha_0} \cap A_{\alpha_1}$. Then $(\alpha_0, \beta^*) <_R (\alpha_2, \gamma^*)$ and $(\alpha_1, \beta^*) <_R (\alpha_2, \gamma^*)$, so, since $R$ is tree-like, $(\alpha_0, \beta^*) <_R (\alpha_1, \beta^*)$. Thus, $\{(\alpha, \beta^*) \mid \alpha \in B\}$ is a cofinal branch through $S$.
\end{proof}

\section{Combinatorial and forcing preliminaries} \label{forcingSect}

In this section, we recall some relevant combinatorial and forcing notions and basic facts thereon. 
We start with variations on Jensen's square principle, which will be important throughout this paper.

\begin{definition} \label{jensen_square_def}
	Let $\lambda$ and $\mu$ be cardinals, with $\mu$ infinite and $\lambda > 1$. A $\square_{\mu, <\lambda}$-sequence is a sequence $\vec{\mathcal{C}} = \langle \mathcal{C}_\alpha \mid \alpha < \mu^+ \rangle$ such that:
	\begin{enumerate}
	  \item{For all limit $\alpha < \mu^+$, $\mathcal{C}_\alpha$ is a collection of club subsets of $\alpha$ and $1 \leq |\mathcal{C}_\alpha| < \lambda$;}
		\item{for all limit $\alpha < \beta < \mu^+$ and all $C \in \mathcal{C}_\beta$, if $\alpha \in C'$, then $C \cap \alpha \in \mathcal{C}_\alpha$;}
    \item for all limit $\alpha < \mu^+$ and all $C \in \mathcal{C}_\alpha$, $\mathrm{otp(C)} \leq \mu$.
	\end{enumerate}
	$\square_{\mu, < \lambda}$ holds if there is a $\square_{\mu, < \lambda}$-sequence.
\end{definition}

\begin{remark} \label{square_remark}
  $\square_{\mu, < \lambda^+}$ is usually denoted $\square_{\mu, \lambda}$. It is immediate that, if $\lambda_0 < \lambda_1$, then $\square_{\mu, \lambda_0}$ implies $\square_{\mu, \lambda_1}$. $\square_{\mu,1}$ is Jensen's classical principle $\square_\mu$. $\square_{\mu, \mu}$ is also called \emph{weak square} and denoted $\square^*_\mu$. Jensen proved that $\square^*_\mu$ is equivalent to the existence of a special $\mu^+$-Aronszajn tree (for a proof of this fact, see \cite[Section 5]{cummings_notes}. $\square_{\mu, \mu^+}$ is also called \emph{silly square} and holds in all models of ZFC. 
\end{remark}

\begin{definition}
  Suppose $1 < \lambda \leq \kappa$ are cardinals, with $\kappa$ infinite and regular, and $\vec{\mathcal{C}} = \langle \mathcal{C}_\alpha 
  \mid \alpha < \kappa \rangle$ satisfies (1) and (2) of Definition \ref{jensen_square_def} for all $\alpha < \beta < \kappa$ ($\kappa$ 
  here is replacing the $\mu^+$ from Definition \ref{jensen_square_def}). A club 
  $D \subseteq \kappa$ is said to be a \emph{thread} through $\vec{\mathcal{C}}$ if, for all $\alpha \in D'$, $D \cap \alpha \in \mathcal{C}_\alpha$.
\end{definition}

Clause (3) in Definition \ref{jensen_square_def} easily implies that, if $\vec{\mathcal{C}}$ is a $\square_{\mu, < \lambda}$-sequence, then there 
is no thread through $\vec{\mathcal{C}}$. If we weaken clause (3) to just require its anti-thread consequence, then we obtain the 
definition of $\square(\kappa, < \lambda)$.

\begin{definition}
	Let $1 < \lambda \leq \kappa$ be cardinals, with $\kappa$ infinite and regular. A $\square(\kappa, < \lambda)$-sequence is a sequence $\vec{\mathcal{C}} = \langle \mathcal{C}_\alpha \mid \alpha < \kappa \rangle$ such that:
	\begin{enumerate}
		\item{for all limit $\alpha < \kappa$, $\mathcal{C}_\alpha$ is a collection of club subsets of $\alpha$ 
      and $1 \leq |\mathcal{C}_\alpha| < \lambda$;}
		\item{for all limit $\alpha < \beta < \kappa$ and all $C \in \mathcal{C}_\beta$, if $\alpha \in C'$, then $C \cap \alpha \in \mathcal{C}_\alpha$;}
    \item{there is no thread through $\vec{\mathcal{C}}$.}
	\end{enumerate}
	$\square(\kappa, < \lambda)$ holds if there is a $\square(\kappa, < \lambda)$-sequence. As above, we denote $\square(\kappa, < \lambda^+)$ by $\square(\kappa, \lambda)$ and $\square(\kappa, 1)$ by $\square(\kappa)$.
\end{definition}

We will also need the notion of approachability, which plays an important role in the study of successors of singular cardinals.

\begin{definition}
	Let $\kappa$ be a regular, uncountable cardinal, and let $\vec{a} = \langle a_\alpha \mid \alpha < \kappa \rangle$ 
  be a sequence of bounded subsets of $\kappa$. If $\gamma < \kappa$, then $\gamma$ is \emph{approachable with 
  respect to $\vec{a}$} if there is an unbounded $A \subseteq \gamma$ such that $\mathrm{otp}(A) = \cf(\gamma)$ and, 
  for every $\beta < \gamma$, there is $\alpha < \gamma$ such that $A \cap \beta = a_\alpha$.
\end{definition}

\begin{definition}
	Let $\kappa$ be a regular, uncountable cardinal.
	\begin{itemize}
		\item{If $A \subseteq \kappa$, then $A \in I[\kappa]$ if there is a club 
        $C \subseteq \kappa$ and a sequence $\vec{a} = \langle a_\alpha \mid \alpha < \kappa \rangle$ 
        of bounded subsets of $\kappa$ such that, for every $\gamma \in A \cap C$, $\cf(\gamma) < \gamma$ and 
        $\gamma$ is approachable with respect to $\vec{a}$.}
		\item{If $\mu$ is a singular cardinal, then $AP_\mu$ (the \emph{approachability property} at $\mu$) is the assertion that $\mu^+ \in I[\mu^+]$.}
	\end{itemize}
\end{definition}

$I[\kappa]$ is called the \emph{approachability ideal}. A wealth of information on $I[\kappa]$ can be found in \cite{eisworth}. We collect some of the relevant facts here.

\begin{remark}
	Let $\kappa$ be a regular, uncountable cardinal.
	\begin{itemize}
		\item{$I[\kappa]$ is a normal, $\kappa$-complete ideal extending the non-stationary ideal.}
		\item{Suppose $\kappa^{<\kappa} = \kappa$ and $\vec{a} = \langle a_\alpha \mid \alpha < \kappa \rangle$ 
      is a fixed enumeration of all bounded subsets of $\kappa$. Then $A \in I[\kappa]$ iff there is a club 
      $C \subseteq \kappa$ such that, for all $\gamma \in A \cap C$, $\cf(\gamma) < \gamma$ and $\gamma$ is approachable with respect to $\vec{a}$.}
		\item{If $\mu$ is an infinite cardinal, then $\square^*_\mu \Rightarrow AP_\mu$.}
		\item{If $\lambda$ is a supercompact cardinal and $\mu$ is a singular cardinal such that $\cf(\mu) < \lambda < \mu$, then $\neg AP_\mu$.}
	\end{itemize}
\end{remark}

We now move to forcing. We first recall the notion of strategic closure.

\begin{definition}
	Let $\mathbb{P}$ be a partial order and let $\beta$ be an ordinal.
	\begin{enumerate}
		\item {The two-player game $G_\beta(\mathbb{P})$ is defined as follows: Players I and II alternately play entries in $\langle p_\alpha \mid \alpha < \beta \rangle$, a decreasing sequence of conditions in $\mathbb{P}$ with $p_0 = \bbm{1}_{\mathbb{P}}$. Player I plays at odd stages, and Player II plays at even stages (including all limit stages). If there is an even stage $\alpha < \beta$ at which Player II cannot play, then Player I wins. Otherwise, Player II wins.}
    \item{$\mathbb{P}$ is \emph{$\beta$-strategically closed} if Player II has a winning strategy for the game $G_\beta(\mathbb{P})$. $\mathbb{P}$ 
      is \emph{$<\beta$-strategically closed} if it is $\alpha$-strategically closed for all $\alpha < \beta$.}
	\end{enumerate}
\end{definition}

We now introduce the standard forcing poset to add a $\square_{\mu, \lambda}$-sequence (cf. \cite{cfm}, Definition 6.2).

\begin{definition}
	Let $\lambda$ and $\mu$ be cardinals, with $1 \leq \lambda \leq \mu$ and $\mu$ uncountable. $\bb{S}(\mu, \lambda)$ is the forcing poset consisting of all conditions $p = \langle \mathcal{C}^p_\alpha \mid \alpha \leq \gamma^p \rangle$ such that:
	\begin{enumerate}
		\item{$\gamma^p < \mu^+$;}
		\item{for all limit $\alpha \leq \gamma^p$ and all $C \in \mathcal{C}^p_\alpha$, $C$ is a club in $\alpha$ and $\mathrm{otp}(C) \leq \mu$;}
		\item{for all limit $\alpha \leq \gamma^p$, $1 \leq |\mathcal{C}^p_\alpha| \leq \lambda$;}
		\item{for all limit $\alpha < \beta \leq \gamma^p$ and all $C \in \mathcal{C}^p_\beta$, if $\alpha \in C'$, then $C' \cap \alpha \in \mathcal{C}^p_\alpha$.}
	\end{enumerate}
	For all $p,q \in \bb{S}(\mu, \lambda)$, $q \leq p$ iff $q$ end-extends $p$, i.e., $\gamma^q \geq \gamma^p$ and, for all $\alpha \leq \gamma^p$, $\mathcal{C}^q_\alpha = \mathcal{C}^p_\alpha$.

	$\bb{S}(\mu, < \lambda)$ is defined similarly, except that, in item (3), we require $1 \leq |\mathcal{C}^p_\alpha| < \lambda$. 
  When considering $\bb{S}(\mu, < \lambda)$, we will assume $1 < \lambda \leq \mu^+$, as $\bb{S}(\mu, \mu) = \bb{S}(\mu, < \mu^+)$.
\end{definition}

Proofs of the following can be found in \cite{cfm}.

\begin{proposition}
	Let $\lambda$ and $\mu$ be cardinals, with $1 \leq \lambda \leq \mu$ and $\mu$ uncountable.
	\begin{enumerate}
		\item{$\bb{S}(\mu, \lambda)$ (resp. $\bb{S}(\mu, < \lambda)$) is $\omega_1$-closed and $(\mu + 1)$-strategically closed. In particular, forcing with $\bb{S}(\mu, \lambda)$ (resp. $\bb{S}(\mu, < \lambda)$) does not add any new $\mu$-sequences of ordinals.}
		\item{If $G$ is $\bb{S}(\mu, \lambda)$-generic (resp. $\bb{S}(\mu, < \lambda)$-generic) over $V$, then 
      $\bigcup G$ is a $\square_{\mu, \lambda}$-sequence (resp. a $\square_{\mu, < \lambda}$-sequence).}
	\end{enumerate}
\end{proposition}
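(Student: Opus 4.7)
The plan is to prove the three assertions in order, with the substantial work being the strategic closure. For $\omega_1$-closure, given a decreasing $\omega$-sequence $\langle p_n \mid n < \omega \rangle$, after passing to a subsequence to assume the $\gamma^{p_n}$ are strictly increasing, I would build a lower bound $q$ by setting $\gamma^q := \sup_n \gamma^{p_n}$, inheriting $\mathcal{C}^q_\alpha$ for $\alpha < \gamma^q$ from the $p_n$ via coherence, and setting $\mathcal{C}^q_{\gamma^q} := \{D\}$ with $D := \{\gamma^{p_n} \mid n < \omega\}$. Since $\mathrm{otp}(D) = \omega \leq \mu$ and $D$ has no limit points below $\gamma^q$, clauses (2)--(4) of the definition of $\bb{S}(\mu,\lambda)$ are satisfied at $\gamma^q$ (with clause (4) holding vacuously), so $q$ is a valid lower bound.

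For $(\mu+1)$-strategic closure I would exhibit an explicit winning strategy $\sigma$ for Player II. At an even successor stage $\alpha+2$ following Player I's move $p_{\alpha+1}$, $\sigma$ extends by taking $\gamma^{p_{\alpha+2}} := \gamma^{p_{\alpha+1}}+1$; since the new top is a successor ordinal, no further club data is required. At a limit even stage $\alpha$, $\sigma$ sets $\gamma^{p_\alpha} := \sup_{\beta<\alpha}\gamma^{p_\beta}$, inherits the $\mathcal{C}$-values below $\gamma^{p_\alpha}$, and puts $\mathcal{C}^{p_\alpha}_{\gamma^{p_\alpha}} := \{C_\alpha\}$ with $C_\alpha := \{\gamma^{p_\beta} \mid \beta < \alpha,\ \beta \text{ an even successor}\}$. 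The verifications to carry out are that $C_\alpha$ is cofinal in $\gamma^{p_\alpha}$ with order type at most $\alpha \leq \mu$, and that coherence holds at $\gamma^{p_\alpha}$; the latter is the subtle point. Using the strategy's insistence on placing tops at successor ordinals at even successor stages, one shows that any $\eta \in C'_\alpha \cap \gamma^{p_\alpha}$ must be a limit ordinal equal to $\gamma^{p_{\beta^*}}$ for a unique limit $\beta^* < \alpha$, so by induction $\mathcal{C}^{p_{\beta^*}}_\eta = \{C_{\beta^*}\}$ and $C_\alpha \cap \eta = C_{\beta^*}$, yielding the required coherence. Given $(\mu+1)$-strategic closure, the standard interleaving argument shows no new $\mu$-sequences of ordinals are added: for a name $\dot{f}$ and starting condition $p_0$, have Player I at each odd stage $2\beta+1$ play an extension deciding $\dot{f}(\beta)$ while Player II follows $\sigma$; the output $p_\mu$ extends $p_0$ and decides $\dot{f}$.

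For part (2), I would verify that for each $\gamma < \mu^+$ the set $\{p \mid \gamma^p \geq \gamma\}$ is dense --- any condition can be extended by moving its top to a successor above $\gamma$ --- so by genericity $\bigcup G = \vec{\mathcal{C}}$ has domain $\mu^+$. Clauses (1), (2), and (3) of Definition \ref{jensen_square_def} then transfer directly from clauses (3), (4), and (2) of the definition of $\bb{S}(\mu,\lambda)$ (or of $\bb{S}(\mu,<\lambda)$), giving the desired $\square_{\mu,\lambda}$- or $\square_{\mu,<\lambda}$-sequence. The principal difficulty in the entire proof is the limit-stage coherence check in the strategic closure argument; once that is established, everything else is essentially bookkeeping.
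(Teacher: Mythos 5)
Your overall architecture is the standard one (the paper itself gives no proof, deferring to Cummings--Foreman--Magidor, and your strategy-based argument is essentially the cited one), and your $\omega_1$-closure argument and the interleaving argument deducing ``no new $\mu$-sequences'' from $(\mu+1)$-strategic closure are fine. But there is a genuine error in the heart of the strategic-closure argument: at a limit even stage $\alpha$, the set $C_\alpha := \{\gamma^{p_\beta} \mid \beta < \alpha,\ \beta \text{ an even successor}\}$ is \emph{not closed} in $\gamma^{p_\alpha}$ once $\alpha > \omega$. Your own coherence analysis identifies the limit points of $C_\alpha$ below $\gamma^{p_\alpha}$: they are exactly the ordinals $\gamma^{p_{\beta^*}}$ for limit even $\beta^* < \alpha$ (by continuity of the top ordinals at Player II's limit moves). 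But those ordinals are tops of limit stages, not of even successor stages, so they do not belong to $C_\alpha$ --- indeed they cannot, since every even-successor top is a successor ordinal by your own $+1$ trick, while $\gamma^{p_{\beta^*}}$ is a limit. Hence already at stage $\omega \cdot 2$ the set $C_{\omega\cdot 2}$ omits its limit point $\gamma^{p_\omega}$, is not a club in $\gamma^{p_{\omega\cdot 2}}$, and Player II's purported move violates clause (2) of the definition of $\bb{S}(\mu,\lambda)$. You verified cofinality, order type, and clause (4), but never closedness, which is where the definition as written fails. The repair is small and your coherence computation survives it verbatim: put $C_\alpha := \{\gamma^{p_\beta} \mid \beta < \alpha,\ \beta \text{ even}\}$, including the limit-stage tops. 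Then $C_\alpha$ is closed, its limit points are exactly the $\gamma^{p_{\beta^*}}$ with $\beta^*$ a limit even stage, at which II explicitly placed $\mathcal{C}^{p_{\beta^*}}_{\gamma^{p_{\beta^*}}} = \{C_{\beta^*}\}$, and $C_\alpha \cap \gamma^{p_{\beta^*}} = C_{\beta^*}$ gives clause (4); the successor-ordinal trick still does its real job, namely guaranteeing that no limit point of $C_\alpha$ is a top chosen by Player I, so that coherence is only ever required at points II controlled.

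A secondary gap is the density claim in part (2): it is not true that ``any condition can be extended by moving its top to a successor above $\gamma$.'' A condition must specify $\mathcal{C}^q_\beta$ at \emph{every} limit ordinal $\beta$ in the interval $(\gamma^p, \gamma]$, and these values must cohere with one another, so there is no one-step extension; moreover a naive filling such as $C_\beta = (\gamma^p, \beta)$ can violate the order-type bound $\mathrm{otp}(C_\beta) \leq \mu$. The standard remedy is an induction on $\gamma < \mu^+$: at $\gamma$ of cofinality $\theta \leq \mu$, fix an increasing cofinal sequence $\langle \gamma_i \mid i < \theta \rangle$ and run your (corrected) strategy for $\theta$-many rounds, letting Player I use the induction hypothesis to jump above $\gamma_i$ at odd stages and Player II close off at limits exactly as in the strategy; the resulting condition has top $\geq \gamma$. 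With these two repairs --- closing the clubs $C_\alpha$ at limit stages, and proving density of $\{p \mid \gamma^p \geq \gamma\}$ by induction rather than by fiat --- your proof is correct and matches the proof the paper cites.
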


There is also a natural forcing to add a thread through a square sequence.

\begin{definition}
	Suppose $\lambda$ and $\mu$ are cardinals and $\vec{\mathcal{C}} = \langle \mathcal{C}_\alpha \mid \alpha < \mu^+ \rangle$ is a $\square_{\mu, < \lambda}$-sequence. Let $\kappa \leq \mu$ be a regular, uncountable cardinal. $\bb{T}_\kappa(\vec{\mathcal{C}})$ is the forcing poset whose conditions are all $t$ such that:
	\begin{enumerate}
		\item{$t$ is a closed, bounded subset of $\mu^+$ and $\mathrm{otp}(t) < \kappa$;}
		\item{for all $\alpha \in t'$, $t \cap \alpha \in \mathcal{C}_\alpha$.}
	\end{enumerate}
	If $s,t \in \bb{T}_\kappa(\vec{\mathcal{C}})$, then $s \leq t$ iff $s$ end-extends $t$.
\end{definition}

If $\vec{\mathcal{C}}$ was added by forcing with $\bb{S}(\mu, < \lambda)$, then 
$\bb{T}_\kappa(\vec{\mathcal{C}})$ is rather nicely behaved. To be more precise, let 
$\dot{G}$ be the canonical $\bb{S}(\mu, < \lambda)$-name for the generic filter, and let $\dot{\vec{\mathcal{C}}}$ be the 
canonical $\bb{S}(\mu, < \lambda)$-name for $\bigcup \dot{G}$. Let $\kappa \leq \mu$ be a regular, uncountable cardinal. 
For a poset $\bb{P}$ and a cardinal $\theta$, let $\bb{P}^\theta$ denote the full-support product of $\theta$ copies of $\bb{P}$.

\begin{proposition} \label{denseclosed}
	Suppose $0 < \theta < \lambda \leq \mu^+$. Then $\bb{S}(\mu, < \lambda) * (\dot{\bb{T}}_\kappa(\dot{\vec{\mathcal{C}}}))^\theta$ 
  has a $\kappa$-directed closed dense subset.
\end{proposition}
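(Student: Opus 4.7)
The plan is to exhibit an explicit dense subset $\bb{D} \subseteq \bb{S}(\mu, <\lambda) * (\dot{\bb{T}}_\kappa(\dot{\vec{\mathcal{C}}}))^\theta$ and show directly that $\bb{D}$ is $\kappa$-directed closed. Let $\bb{D}$ consist of those conditions $(p, \langle \check{t}_i \mid i < \theta \rangle)$ with each $t_i \in V$, such that $\gamma^p$ is a limit ordinal, each $t_i$ is a closed bounded subset of $\mu^+$ with $\max(t_i) = \gamma^p$ and $\gamma^p \in t_i'$, $\mathrm{otp}(t_i) < \kappa$, and for every $\alpha \in t_i'$, $t_i \cap \alpha \in \mathcal{C}^p_\alpha$.

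For density, let $(q, \langle \dot{\sigma}_i \mid i < \theta \rangle)$ be arbitrary. Since $\bb{S}(\mu, <\lambda)$ is $(\mu+1)$-strategically closed, it adds no bounded subsets of $\mu^+$, so each $\dot{\sigma}_i^G$ lies in $V$; the set of $q' \leq q$ deciding $\dot{\sigma}_i$ as a check-name $\check{t}^0_i$ for some $t^0_i \in V$ is therefore dense below $q$. Since $\theta < \lambda \leq \mu^+$ gives $\theta \leq \mu$, we use strategic closure to build a descending sequence of length $\theta + 1$ in $\bb{S}$ deciding every $\dot{\sigma}_i$, producing $q^* \leq q$ with $q^* \Vdash \dot{\sigma}_i = \check{t}^0_i$ for all $i < \theta$. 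Routinely extend $q^*$ to some $q^{**}$ whose height $\gamma^{q^{**}}$ is a limit ordinal strictly greater than every $\max(t^0_i)$. Put $\gamma^p := \gamma^{q^{**}} + \omega$, $t_i := t^0_i \cup \{\gamma^{q^{**}} + n \mid 1 \leq n < \omega\} \cup \{\gamma^p\}$, and let $p$ extend $q^{**}$ to height $\gamma^p$ with $\mathcal{C}^p_{\gamma^p} := \{t_i \cap \gamma^p \mid i < \theta\}$. This set has size $\leq \theta < \lambda$, $\gamma^p$ is the unique new limit, and coherence for each $t_i \cap \gamma^p$ at $\alpha < \gamma^p$ in its derivative reduces to the thread property of $t^0_i$ in $q^{**}$. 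Thus $(p, \langle \check{t}_i \mid i < \theta \rangle) \in \bb{D}$ refines $(q, \langle \dot{\sigma}_i \mid i < \theta \rangle)$.

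For $\kappa$-directed closure, let $\langle (p_j, \langle \check{t}^j_i \mid i < \theta \rangle) \mid j \in J \rangle$ be a directed family in $\bb{D}$ with $|J| < \kappa$. If $\bar{\gamma} := \sup_j \gamma^{p_j}$ is attained, then by directedness some $(p_{j^*}, \vec{t}^{\,j^*})$ is already a lower bound. Otherwise, for each $i < \theta$ set $\bar{t}_i := \bigcup_j t^j_i \cup \{\bar{\gamma}\}$. By end-extension and directedness this is a closed bounded subset of $\mu^+$ with $\bar{\gamma} \in \bar{t}_i'$ (since the $\gamma^{p_j} = \max(t^j_i)$ are cofinal in $\bar{\gamma}$), and $\mathrm{otp}(\bar{t}_i) = (\sup_j \mathrm{otp}(t^j_i)) + 1 < \kappa$ by regularity of $\kappa$ together with $|J| < \kappa$. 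Define $p$ by $\gamma^p := \bar{\gamma}$, $\mathcal{C}^p_\alpha := \mathcal{C}^{p_j}_\alpha$ for any $j$ with $\alpha \leq \gamma^{p_j}$ (well-defined by pairwise compatibility), and $\mathcal{C}^p_{\bar{\gamma}} := \{\bar{t}_i \cap \bar{\gamma} \mid i < \theta\}$, of size $\leq \theta < \lambda$. For any $\alpha < \bar{\gamma}$ in $\bar{t}_i'$, choose $j$ with $\gamma^{p_j} > \alpha$; then $\bar{t}_i \cap \alpha = t^j_i \cap \alpha \in \mathcal{C}^{p_j}_\alpha = \mathcal{C}^p_\alpha$. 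Thus $(p, \langle \check{\bar{t}}_i \mid i < \theta \rangle) \in \bb{D}$ is a lower bound for the family.

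The main technical point is designing $\bb{D}$ so that the matching-tops requirement $\max(t_i) = \gamma^p$ with $\gamma^p \in t_i'$ forces $\bigcup_j t^j_i$ to be automatically cofinal in $\bar{\gamma}$ at directed limits, making it a coherent club to place in $\mathcal{C}^p_{\bar{\gamma}}$. The hypothesis $\theta < \lambda$ is essential precisely to fit the $\theta$ many new clubs into $\mathcal{C}^p_{\bar{\gamma}}$ within the $< \lambda$ cardinality bound demanded by $\bb{S}(\mu, <\lambda)$.
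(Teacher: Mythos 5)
Your proof is correct and follows essentially the same route as the paper's: you isolate a dense set of conditions whose side threads are decided in $V$ with tops synchronized to $\gamma^p$, and at limits you seal the unions of the threads into the top level by setting $\mathcal{C}^p_{\bar{\gamma}}$ to be exactly those clubs. The only cosmetic differences are that you additionally require $\gamma^p \in t_i'$ (forcing the $+\omega$ padding in your density step, which the paper avoids by letting $\gamma^p$ sit isolated atop each $t_i$) and that you verify $\kappa$-directed closure by a direct attained/unattained case analysis, whereas the paper observes that the dense set is tree-like under $\geq$, so plain $\kappa$-closure suffices.
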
 

\begin{proof}
	Let $\bb{U}$ consist of all $(p, \langle \dot{t}_\eta \mid \eta < \theta \rangle) \in \bb{S}(\mu, < \lambda) * (\dot{\bb{T}}_\kappa(\dot{\vec{\mathcal{C}}}))^\theta$ such that:
	\begin{enumerate}
		\item{there is $\langle t_\eta \mid \eta < \theta \rangle \in V$ such that $p \Vdash ``\langle \dot{t}_\eta \mid \eta < \theta \rangle = \langle \check{t}_\eta \mid \eta < \theta \rangle"$;}
		\item{for all $\eta < \theta$, $\gamma^p = \max(t_\eta)$.}
	\end{enumerate}

	We first show that $\bb{U}$ is dense. To this end, let $(p_0, \langle \dot{t}^0_\eta \mid \eta < \theta \rangle) \in \bb{S}(\mu, < \lambda) * (\dot{\bb{T}}_\kappa(\dot{\vec{\mathcal{C}}}))^\theta$. Since $\bb{S}(\mu, < \lambda)$ is $\mu^+$-distributive, we can find $p \leq p_0$ and $\langle t^0_\eta \mid \eta < \theta \rangle \in V$ such that $p \Vdash ``\langle \dot{t}^0_\eta \mid \eta < \theta \rangle = \langle \check{t}^0_\eta \mid \eta < \theta \rangle."$ By strengthening $p$ if necessary, we may assume that $\gamma^p > \max(t^0_\eta)$ for all $\eta < \theta$. For all $\eta < \theta$, let $t_\eta = t^0_\eta \cup \{\gamma^p\}$. Then $(p, \langle \check{t}_\eta \mid \eta < \theta \rangle) \leq (p_0, \langle \dot{t}^0_\eta \mid \eta < \lambda \rangle)$ and is in $\bb{U}$.

  We next show that $\bb{U}$ is $\kappa$-directed closed. First note that $(\bb{U}, \geq)$ (with the reverse order) is tree-like, 
  i.e., for all $u,v,w \in \bb{U}$, if $w \leq u,v$, then either $u \leq v$ or $v \leq u$. A tree-like partial order is $\kappa$-directed 
  closed iff it is $\kappa$-closed, so it suffices to show that $\bb{U}$ is $\kappa$-closed. To this end, let $\langle (p_\xi, \langle \dot{t}^\xi_\eta \mid \eta < \theta \rangle) \mid \xi < \epsilon \rangle$ be a strictly decreasing sequence from $\bb{U}$, where $\epsilon < \kappa$ is a limit ordinal. For each $\xi < \epsilon$ and $\eta < \theta$, let $t^\xi_\eta$ be such that $p_\xi \Vdash ``\dot{t}^\xi_\eta = \check{t}^\xi_\eta."$ Let $\gamma = \sup(\{\gamma^{p_\xi} \mid \xi < \epsilon\})$. For each $\eta < \theta$, let $t^*_\eta = \bigcup_{\xi < \epsilon} t^\xi_\eta$, and note that $t^*_\eta$ is a club in $\gamma$. We define $p = \langle \mathcal{C}^p_\alpha \mid \alpha \leq \gamma \rangle$ as follows. For $\alpha < \gamma$, let $\mathcal{C}^p_\alpha = \mathcal{C}^{p_\xi}_\alpha$, where $\xi < \epsilon$ is least such that $\alpha \leq \gamma^{p_\xi}$. Let $\mathcal{C}^p_\gamma = \{t^*_\eta \mid \eta < \theta \}$. Finally, for all $\eta < \theta$, let $t_\eta = t^*_\eta \cup \{\gamma\}$. It is easily verified that $(p, \langle \check{t}_\eta \mid \eta < \theta \rangle)$ is a lower bound for $\langle (p_\xi, \langle \dot{t}^\xi_\eta \mid \eta < \theta \rangle) \mid \xi < \epsilon \rangle$ in $\bb{U}$.
\end{proof}

\begin{remark} \label{denseclosedremark}
  An easy adaptation to the proof of Proposition \ref{denseclosed} yields that if, in addition to the hypotheses 
  in Proposition \ref{denseclosed}, $\dot{\bb{R}}$ is a $\bb{S}(\mu, < \lambda) * \dot{\bb{T}}_\kappa(\dot{\vec{\mathcal{C}}})$-name 
  for a $\kappa$-closed forcing notion, then $\bb{S}(\mu, < \lambda) * (\dot{\bb{T}}_\kappa(\dot{\vec{\mathcal{C}}}) * \dot{\bb{R}})^\theta$ 
  has a dense $\kappa$-closed subset, namely the set $\bb{U}^*$ of all $(p, \langle (\dot{t}_\eta, \dot{r}_\eta) \mid \eta < \lambda \rangle)$ 
  such that $(p, \langle \dot{t}_\eta \mid \eta < \theta \rangle)$ is in the set $\bb{U}$ identified in the proof of Proposition \ref{denseclosed}. 
  The verification that $\bb{U}^*$ is dense and $\kappa$-closed is precisely as in the proof of Proposition \ref{denseclosed}.
\end{remark}

\begin{corollary} \label{distributive_cor}
	Let $G$ be $\bb{S}(\mu, < \lambda)$-generic over $V$, and let $\vec{\mathcal{C}} = \bigcup G$. Let $\kappa \leq \mu$ be a regular, uncountable cardinal.
	\begin{enumerate}
		\item{In $V[G]$, $\bb{T} = \bb{T}_\kappa(\vec{\mathcal{C}})$ is $\kappa$-distributive. Moreover, $\bb{T}^\theta$ is $\kappa$-distributive for all $\theta < \lambda$ 
      and, if $\dot{\bb{R}}$ is a $\bb{T}$-name for a $\kappa$-closed forcing, then $(\bb{T} * \dot{\bb{R}})^\theta$ is $\kappa$-distributive 
    for all $\theta < \lambda$.}
    \item{If $H$ is $\bb{T}$-generic over $V[G]$, then $T = \bigcup{H}$ is a thread through $\vec{\mathcal{C}}$ of order type $\kappa$.}
	\end{enumerate}
\end{corollary}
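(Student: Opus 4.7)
The plan is to derive both parts directly from Proposition \ref{denseclosed} and Remark \ref{denseclosedremark}. For (1), I would invoke Proposition \ref{denseclosed} to see that $\bb{S}(\mu, < \lambda) * (\dot{\bb{T}}_\kappa(\dot{\vec{\mathcal{C}}}))^\theta$ has a $\kappa$-directed closed dense subset and is therefore $\kappa$-distributive. Since $\bb{S}(\mu, < \lambda)$ is $\mu + 1$-strategically closed and so adds no $<\kappa$-sequences of ordinals, a standard quotient argument then yields that $(\bb{T}_\kappa(\vec{\mathcal{C}}))^\theta$ is $\kappa$-distributive in $V[G]$: any new $<\kappa$-sequence of ordinals in a $(\bb{T}_\kappa(\vec{\mathcal{C}}))^\theta$-extension of $V[G]$ already lies in $V \subseteq V[G]$ by the $\kappa$-distributivity of the full iteration. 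The moreover clause proceeds identically, invoking Remark \ref{denseclosedremark} in place of Proposition \ref{denseclosed}.

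For (2), I would first observe that because $s \le t$ in $\bb{T}_\kappa(\vec{\mathcal{C}})$ means $s$ end-extends $t$, the filter $H$ is linearly ordered under end-extension and $T \cap (\max(t) + 1) = t$ for every $t \in H$. Since each such $t$ has $\mathrm{otp}(t) < \kappa$, this yields $\mathrm{otp}(T) \le \kappa$. For the reverse inequality together with unboundedness of $T$ in $\mu^+$, the plan is to verify that for each $\alpha < \kappa$ and $\gamma < \mu^+$ the set $D_{\alpha, \gamma} := \{t \in \bb{T}_\kappa(\vec{\mathcal{C}}) : \mathrm{otp}(t) \ge \alpha \text{ and } \max(t) \ge \gamma\}$ is dense in $V[G]$. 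Working in $V$ with the $\kappa$-closed dense subset $\bb{U} \subseteq \bb{S}(\mu, < \lambda) * \dot{\bb{T}}_\kappa(\dot{\vec{\mathcal{C}}})$ from Proposition \ref{denseclosed}, given any $(p_0, \dot{t}_0)$ I would first extend into $\bb{U}$ and then build a $\le$-decreasing sequence of length roughly $\alpha$ there, at successor steps adding one successor ordinal to the $\bb{T}$-component (sidestepping coherence) and bumping $\max$ past $\gamma$ at an early step, and at limit stages taking the lower bound furnished by the proof of Proposition \ref{denseclosed}, which automatically places the accumulated club into the new $\mathcal{C}^p_{\gamma^p}$ so that coherence is witnessed. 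A routine genericity argument then transfers this density back to $\bb{T}_\kappa(\vec{\mathcal{C}})$ over $V[G]$, so $H$ meets each $D_{\alpha, \gamma}$ and $T$ has order type exactly $\kappa$ and is cofinal in $\mu^+$.

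Finally, to see that $T$ coheres with $\vec{\mathcal{C}}$, given $\beta \in T'$ with $\beta < \mu^+$ I would pick any $t \in H$ with $\max(t) > \beta$; since $\beta$ is a limit of $T \cap \beta = t \cap \beta$, closedness of $t$ forces $\beta \in t'$, whence $T \cap \beta = t \cap \beta \in \mathcal{C}_\beta$ by the coherence clause in the definition of $\bb{T}_\kappa(\vec{\mathcal{C}})$. The only real technical point is the limit-stage coherence inside the density argument for $D_{\alpha, \gamma}$, but this is already the content of the lower-bound construction in the proof of Proposition \ref{denseclosed}, so essentially no additional work is needed.
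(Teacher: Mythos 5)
Your proposal is correct and follows essentially the same route as the paper, which states this as an immediate consequence of Proposition \ref{denseclosed} and Remark \ref{denseclosedremark}: part (1) is exactly the standard quotient argument from the $\kappa$-closed dense subset of the full iteration, and part (2) is the intended density-and-genericity argument, carried out inside the dense set $\bb{U}$ so that the limit-stage lower bound supplies the needed coherence. No gaps.
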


We similarly introduce a forcing poset to add a $\square(\kappa, \lambda)$-sequence.

\begin{definition}
	Suppose $1 < \lambda \leq \kappa$ are cardinals, with $\kappa$ regular and uncountable. $\bb{Q}(\kappa, < \lambda)$ is the forcing poset consisting of conditions $q = \langle \mathcal{C}^q_\alpha \mid \alpha \leq \gamma^q \rangle$ such that:
	\begin{enumerate}
		\item{$\gamma^q < \kappa$;}
		\item{for all limit $\alpha \leq \gamma^q$ and all $C \in \mathcal{C}^q_\alpha$, $C$ is a club in $\alpha$;}
		\item{for all limit $\alpha \leq \gamma^q$, $1 \leq |\mathcal{C}^q_\alpha| < \lambda$;}
		\item{for all limit $\alpha < \beta \leq \gamma^q$ and all $C \in \mathcal{C}^q_\beta$, if $\alpha \in C'$, then $C \cap \alpha \in \mathcal{C}^q_\alpha$.}
	\end{enumerate}
	$\bb{Q}(\kappa, < \lambda)$ is ordered by end-extension. $\bb{Q}(\kappa, < \lambda^+)$ will be denoted by $\bb{Q}(\kappa, \lambda)$.
\end{definition}

\begin{definition}
	Suppose $1 < \lambda \leq \kappa$ are cardinals, with $\kappa$ regular and uncountable, and $\vec{\mathcal{C}} = \langle \mathcal{C}_\alpha \mid \alpha < \kappa \rangle$ is a $\square(\kappa, < \lambda)$-sequence. $\bb{T}(\vec{\mathcal{C}})$ is the forcing poset whose conditions are closed, bounded subsets $t$ of $\kappa$ such that, for all $\alpha \in t'$, $t \cap \alpha \in \mathcal{C}_\alpha$. $\bb{T}(\vec{\mathcal{C}})$ is ordered by end-extension.
\end{definition}

The following is proved similarly to the corresponding facts about forcings to add and thread $\square_{\mu, < \lambda}$-sequences. A proof 
of item (2) for $\bb{Q}(\kappa, 1)$ can be found in \cite[Lemma 35]{lh}. The proof for $\bb{Q}(\kappa, < \lambda)$ for arbitrary $\lambda \leq \kappa$ 
is essentially the same.

\begin{proposition} \label{dist_prop_2}
	Let $1 < \lambda \leq \kappa$, with $\kappa$ regular and uncountable.
	\begin{enumerate}
		\item{$\bb{Q}(\kappa, < \lambda)$ is $\omega_1$-closed and $\kappa$-strategically closed.}
		\item{If $G$ is $\bb{Q}(\kappa, < \lambda)$-generic over $V$, then $\bigcup G$ is a $\square(\kappa, < \lambda)$-sequence in $V[G]$.}
		\item{If $\dot{\vec{\mathcal{C}}}$ is the canonical $\bb{Q}(\kappa, < \lambda)$-name for the union of the generic filter and $0 < \theta < \lambda$, then $\bb{Q}(\kappa, < \lambda) * \dot{\bb{T}}(\dot{\vec{\mathcal{C}}})^\theta$ has a dense $\kappa$-directed closed subset.}
	\end{enumerate}
\end{proposition}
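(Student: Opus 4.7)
The plan is to adapt the standard proofs for the corresponding facts about $\bb{S}(\mu, <\lambda)$ (given above) and $\bb{Q}(\kappa, 1)$ (cited as \cite[Lemma 35]{lh}), treating the three clauses in sequence.

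For clause (1), $\omega_1$-closure is immediate: given a descending $\omega$-sequence $\langle q_n \mid n < \omega \rangle$, take the pointwise union and place the club $\{\gamma^{q_n} \mid n < \omega\}$ at the new top $\gamma := \sup_n \gamma^{q_n}$; the coherence requirement there is vacuous since this club has order type $\omega$ and hence no proper limit points. For $\kappa$-strategic closure, I would exhibit a winning strategy in which Player II plays, at each of her successor-stage moves, a condition whose top is a successor ordinal (so that clause (3) of $\bb{Q}(\kappa, <\lambda)$ is vacuous at the top), and at each limit stage $\xi$ takes the natural union, placing $C_\xi := \{\gamma^{q_\zeta} \mid \zeta < \xi,\ \zeta \text{ a limit or } 0\}$ into $\mathcal{C}^{q_\xi}_{\gamma^{q_\xi}}$. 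Coherence of $C_\xi$ holds by induction, since its limit points are exactly the ordinals $\alpha = \gamma^{q_\zeta}$ for limit $\zeta < \xi$, and Player II has previously placed $C_\zeta = C_\xi \cap \alpha$ into $\mathcal{C}^{q_\zeta}_\alpha$.

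For clause (2), items (1) and (2) of the definition of a $\square(\kappa, <\lambda)$-sequence hold for $\vec{\mathcal{C}} := \bigcup G$ directly from the definition of $\bb{Q}(\kappa, <\lambda)$. For the anti-thread condition, suppose toward a contradiction that some $q^*$ forces a name $\dot{D}$ to be a thread. Using the $\kappa$-strategic closure, I would build below $q^*$ a descending sequence $\langle q_\xi \mid \xi < \omega_1 \rangle$ in which Player I's moves decide $\dot{D} \cap \gamma^{q_\xi}$ at each stage and, after interspersing decisions that force $\dot{D}$ to contain points outside the trace of the tops, in which the resulting decided trace $D^*$ is a club in $\gamma := \sup_\xi \gamma^{q_\xi}$ differing from $\{\gamma^{q_\xi} \mid \xi < \omega_1\}$. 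A lower bound $q^{**}$ with $\mathcal{C}^{q^{**}}_\gamma = \{\{\gamma^{q_\xi} \mid \xi < \omega_1\}\}$ is then a valid condition that forces $\dot{D} \cap \gamma = D^* \notin \mathcal{C}_\gamma$, contradicting the thread property.

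For clause (3), I would follow the proof of Proposition \ref{denseclosed} essentially verbatim. Let $\bb{U} \subseteq \bb{Q}(\kappa, <\lambda) * \dot{\bb{T}}(\dot{\vec{\mathcal{C}}})^\theta$ consist of all $(q, \langle \dot{t}_\eta \mid \eta < \theta \rangle)$ such that each $\dot{t}_\eta$ is decided by $q$ to equal some $\check{t}_\eta \in V$ with $\max(t_\eta) = \gamma^q$. Density of $\bb{U}$ uses the $<\kappa$-distributivity of $\bb{Q}(\kappa, <\lambda)$ (from the strategic closure of clause (1)) together with $\theta < \lambda \leq \kappa$ to pin the $\theta$-many names down simultaneously. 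The reverse order on $\bb{U}$ is tree-like, so $\kappa$-directed closure reduces to $\kappa$-closure, and a limit of a descending $<\kappa$-sequence is built by unioning the $\bb{Q}$-parts, unioning each $t_\eta^\xi$ into a club $t^*_\eta$ in the new top $\gamma$, placing $\{t^*_\eta \mid \eta < \theta\}$ (of size $\leq \theta < \lambda$) into $\mathcal{C}^q_\gamma$, and extending each threading coordinate by $\{\gamma\}$. The main obstacle is the anti-thread verification in clause (2); coordinating Player I's play so that the decided trace $D^*$ genuinely differs from the trace of tops $\{\gamma^{q_\xi}\}$ is a standard but slightly delicate perturbation argument, while the remaining parts are routine generalizations of arguments already present in the excerpt.
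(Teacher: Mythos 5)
Your overall route --- adapting the proof of Proposition \ref{denseclosed} and the argument of \cite[Lemma 35]{lh} --- is exactly the one the paper intends (the paper gives no separate proof, deferring to these), and your treatment of clause (3) is correct. However, there is a concrete error in your strategy for clause (1): at a limit stage $\xi$ you have Player II place $C_\xi = \{\gamma^{q_\zeta} \mid \zeta < \xi,\ \zeta \text{ a limit or } 0\}$ into $\mathcal{C}^{q_\xi}_{\gamma^{q_\xi}}$. This set is in general not a club in $\gamma^{q_\xi}$: at $\xi = \omega$ it is the singleton $\{\gamma^{q_0}\}$, and more generally at any limit stage that is not a limit of limit ordinals (e.g.\ $\xi = \omega \cdot 2$) it is bounded in $\gamma^{q_\xi} = \sup_{\zeta < \xi} \gamma^{q_\zeta}$, so $q_\xi$ violates clause (2) of the definition of $\bb{Q}(\kappa, <\lambda)$ and is not a condition. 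The repair is standard: let $C_\xi$ be the set of tops at \emph{all} even stages $\zeta < \xi$ (equivalently, the closure of the set of all tops played so far). Since Player II takes unions at limit stages, $\zeta \mapsto \gamma^{q_\zeta}$ is continuous there, so this set is club in $\gamma^{q_\xi}$, its limit points are exactly the tops at limit stages, and your inductive coherence verification then goes through verbatim. Note in passing that your claim that the limit points of your $C_\xi$ are the tops at limit $\zeta$ is false for the set as you defined it (they are the tops at limits of limit stages) but true for the repaired set. Your clause (2) argument inherits this problem, since it places the trace of tops $\{\gamma^{q_\xi} \mid \xi < \omega_1\}$ at the final top and needs Player II's strategy to have secured its coherence.

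Separately, your clause (2) construction is more elaborate than necessary, and the step you flag as ``slightly delicate'' is in fact easy. An $\omega$-length play suffices: below $q^*$ build a descending sequence $q_0 \geq q_1 \geq \cdots$ with tops $\gamma_n$ increasing to $\gamma$, choosing at each step an ordinal $\beta_n$ with $\gamma_n < \beta_n < \gamma_{n+1}$ such that $q_{n+1} \Vdash ``\check{\beta}_n \in \dot{D}"$ (possible since $\dot{D}$ is forced to be unbounded). The lower bound $q$ with $\mathcal{C}^q_\gamma = \{\{\gamma_n \mid n < \omega\}\}$ is a legitimate condition because this club has order type $\omega$, so the coherence requirement at $\gamma$ is vacuous --- no strategy bookkeeping is needed at all. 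Then $q$ forces $\gamma \in \dot{D}'$, hence forces $\dot{D} \cap \gamma \in \mathcal{C}_\gamma = \{\{\gamma_n \mid n < \omega\}\}$, yet also forces $\beta_0 \in \dot{D} \cap \gamma$ with $\beta_0 \notin \{\gamma_n \mid n < \omega\}$, a contradiction. This one-point trick --- forcing a single element of $\dot{D}$ strictly between two successive tops --- is precisely the ``perturbation'' you describe, and it needs to be performed only once, not interspersed along an $\omega_1$-sequence. (You should also record, as the paper does for $\bb{P}(\lambda,\kappa)$, that conditions with arbitrarily large tops are dense, so that $\bigcup G$ is defined on all of $\kappa$; this follows from the strategic closure you establish in clause (1).) With these corrections, your proposal matches the standard proof the paper points to.
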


We need two more general facts about forcing. The first is due to Magidor and concerns absorbing forcing posets into L\'{e}vy collapses.

\begin{fact}[\cite{magidor}, Lemma 3] \label{lift}
  Let $\kappa$ be a regular cardinal, and let $\kappa<\lambda<\mu$. Suppose that, in $V^{\mathrm{Coll}(\kappa, <\lambda)}$, $\mathbb{P}$ is a separative, $\kappa$-closed partial order and $|\mathbb{P}|<\mu$. Let $i$ be the natural complete embedding of $\mathrm{Coll}(\kappa, <\lambda)$ into $\mathrm{Coll}(\kappa, <\mu)$ (namely, the identity embedding). Then $i$ can be extended to a complete embedding $k$ of $\mathrm{Coll}(\kappa, <\lambda)*\dot{\mathbb{P}}$ into $\mathrm{Coll}(\kappa, <\mu)$ so that the quotient forcing $\mathrm{Coll}(\kappa, <\mu)/k``(\mathrm{Coll}(\kappa, <\lambda)*\dot{\mathbb{P}})$ is $\kappa$-closed.
\end{fact}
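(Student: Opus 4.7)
The plan is to exploit the natural product factorization
\[\mathrm{Coll}(\kappa, <\mu) \cong \mathrm{Coll}(\kappa, <\lambda) \times \mathrm{Coll}(\kappa, [\lambda, \mu)),\]
obtained by splitting each condition according to which cardinal it collapses, and then to absorb $\mathbb{P}$ into the right-hand factor while working inside the extension by $\mathrm{Coll}(\kappa, <\lambda)$. Abbreviate $\mathbb{C}_0 = \mathrm{Coll}(\kappa, <\lambda)$ and $\mathbb{C}_1 = \mathrm{Coll}(\kappa, [\lambda, \mu))$, and fix a $\mathbb{C}_0$-generic $G_0$ over $V$.

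Working in $V[G_0]$, the poset $\mathbb{C}_1$ is still $\kappa$-closed and collapses every $V$-cardinal in $[\lambda, \mu)$ to have size $\kappa$. Choose a cardinal $\nu$ of $V[G_0]$ with $|\mathbb{P}| \leq \nu < \mu$. Then both $\mathbb{C}_1$ and $\mathbb{P} \times \mathbb{C}_1$ are separative, $\kappa$-closed posets of cardinality at most $\mu$ (using that $\nu^{<\kappa} = \nu$ holds in $V[G_0]$, which is ensured by the first-stage collapse), each forcing $\mu$ to have cardinality $\kappa$. The core of the argument is the standard absorption lemma: any two such posets admit forcing-equivalent dense subsets, each of which can be realized as a canonical tree isomorphic to a dense subset of $\mathrm{Coll}(\kappa, \nu)$. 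Applying this to the pair $(\mathbb{P} \times \mathbb{C}_1, \mathbb{C}_1)$ produces a complete embedding $e : \mathbb{P} \to \mathbb{C}_1$ whose quotient $\mathbb{C}_1 / e``\mathbb{P}$ is (forcing equivalent to) a $\kappa$-closed subforcing of $\mathbb{C}_1$.

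With $e$ in hand, let $\dot{e}$ be a $\mathbb{C}_0$-name for it, and define $k: \mathbb{C}_0 * \dot{\mathbb{P}} \to \mathbb{C}_0 \times \mathbb{C}_1 \cong \mathrm{Coll}(\kappa, <\mu)$ by $k(p, \dot{q}) = (p, \dot{e}(\dot{q}))$. Since $\dot{e}$ sends $\bbm{1}_{\mathbb{P}}$ to the trivial condition of $\mathbb{C}_1$, the map $k$ extends the identity embedding $i$ of $\mathbb{C}_0$ into $\mathrm{Coll}(\kappa, <\mu)$. A routine check shows $k$ is a complete embedding. The resulting quotient forcing $\mathrm{Coll}(\kappa, <\mu)/k``(\mathbb{C}_0 * \dot{\mathbb{P}})$, computed in the extension by $\mathbb{C}_0 * \dot{\mathbb{P}}$, is isomorphic to the interpretation of $\mathbb{C}_1/\dot{e}``\dot{\mathbb{P}}$, which is $\kappa$-closed in $V[G_0]$; since $\mathbb{P}$ is itself $\kappa$-closed over $V[G_0]$, no new $\kappa$-sequences are added at the second stage, so the quotient remains $\kappa$-closed over $V[G_0 * H_{\mathbb{P}}]$, as required.

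The main obstacle is the absorption step, which requires normalizing a $\kappa$-closed separative poset to a canonical tree structure and matching it against the analogous tree inside $\mathbb{C}_1$; the delicate point is verifying the requisite cardinal arithmetic (principally $\nu^{<\kappa} = \nu$ for $\nu \in [\lambda, \mu)$) in $V[G_0]$, which is precisely why the first-stage collapse $\mathrm{Coll}(\kappa, <\lambda)$ is needed rather than working over $V$ directly. Everything else — the product factorization of the Lévy collapse, the verification that $k$ is a complete embedding, and the closure of the final quotient — is routine bookkeeping.
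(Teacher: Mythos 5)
The paper does not prove this statement at all --- it is quoted as a Fact with a citation to Magidor --- so the comparison here is with the standard argument behind Magidor's lemma, whose overall architecture (factor $\mathrm{Coll}(\kappa,<\mu) \cong \mathbb{C}_0 \times \mathbb{C}_1$, absorb $\mathbb{P}$ into the tail collapse working in $V[G_0]$, then lift to the iteration) you have correctly reproduced. However, your central absorption step contains a genuine error. First, neither $\mathbb{C}_1 = \mathrm{Coll}(\kappa,[\lambda,\mu))$ nor $\mathbb{P} \times \mathbb{C}_1$ forces $\mu$ to have cardinality $\kappa$: in every application of this Fact, $\mu$ is inaccessible (the image of a supercompact under an embedding --- note that the Fact implicitly needs this, since your computation $|\mathbb{C}_1| \leq \mu$ and the existence of $\nu \in [|\mathbb{P}|, \mu)$ with $\nu^{<\kappa} = \nu$ both require it), so $\mathbb{C}_1$ is $\mu$-cc, $\mathbb{P} \times \mathbb{C}_1$ is as well, and both preserve $\mu$, forcing $\mu = \kappa^+$. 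Second, and fatally for the step as written, neither poset can ``be realized as a canonical tree isomorphic to a dense subset of $\mathrm{Coll}(\kappa,\nu)$'' for any $\nu < \mu$: a poset with a dense subset of size $\leq \nu$ is $\nu^+$-cc and hence preserves all cardinals $\geq \nu^+$, whereas $\mathbb{C}_1$ collapses $\nu^{++}$ (and cofinally many cardinals below $\mu$). The uniqueness-of-the-collapse lemma you invoke --- a separative, $\kappa$-closed poset of size $\leq \nu = \nu^{<\kappa}$ collapsing $\nu$ to $\kappa$ has a dense subset isomorphic to the tree $({}^{<\kappa}\nu, \supseteq)$, hence is equivalent to $\mathrm{Coll}(\kappa,\nu)$ --- must instead be applied to a \emph{single coordinate}: split $\mathbb{C}_1 \cong \mathrm{Coll}(\kappa,\nu) \times \mathbb{C}_1'$, verify that $\mathbb{P} \times \mathrm{Coll}(\kappa,\nu)$ satisfies its hypotheses in $V[G_0]$ (this is exactly where your observation that $\nu^{<\kappa} = \nu$ can be arranged in $V[G_0]$ earns its keep), conclude $\mathbb{P} \times \mathrm{Coll}(\kappa,\nu) \simeq \mathrm{Coll}(\kappa,\nu)$, and then multiply back by $\mathbb{C}_1'$. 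This yields the complete embedding $e : \mathbb{P} \to \mathrm{RO}(\mathbb{C}_1)$ with quotient equivalent to $\mathrm{Coll}(\kappa,\nu)^{V[G_0]} \times \mathbb{C}_1'$, which is $\kappa$-closed over $V[G_0 \ast H]$ because $\mathbb{P}$ adds no new ${<}\kappa$-sequences --- the conclusion you wanted, but reached by a correct route.

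There is a second, smaller gap in the lifting step: $k(p,\dot{q}) = (p, \dot{e}(\dot{q}))$ is not a condition of $\mathbb{C}_0 \times \mathbb{C}_1$, since $\dot{e}(\dot{q})$ is merely a $\mathbb{C}_0$-name for a member of $\mathbb{C}_1$, not a member of $\mathbb{C}_1$. Converting an embedding that exists only in $V^{\mathbb{C}_0}$ into one definable in $V$ is the actual content of the lemma and cannot be elided. The standard repair: view $(p, \dot{e}(\dot{q}))$ as a condition of the iteration $\mathbb{C}_0 \ast \check{\mathbb{C}}_1$, into which the product $\mathbb{C}_0 \times \mathbb{C}_1$ embeds densely (strengthen $p$ to decide the name), or equivalently work in the Boolean completion and set $k(p,\dot{q}) = \sum\{(p',c) \mid p' \leq p,\ p' \Vdash \dot{e}(\dot{q}) = \check{c}\}$; one should also choose $\dot{e}$ so that $\bbm{1}_{\mathbb{C}_0}$ forces it to be a complete embedding with $\kappa$-closed quotient. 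With these two repairs your outline becomes the standard proof; as written, the absorption claim is false and the definition of $k$ is type-incorrect.
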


The second fact is due to Shelah and involves the preservation of stationary sets by sufficiently closed forcing.

\begin{fact}[\cite{shelah_col}, Theorem 20] \label{apstat}
	Let $\kappa < \lambda$ be infinite, regular cardinals. Suppose that $S$ is a stationary subset of $S^{\lambda}_{\kappa}$, $S \in I[\lambda]$, and $\mathbb{P}$ is a $\kappa^+$-closed forcing poset. Then $S$ remains stationary in $V^{\mathbb{P}}$.
\end{fact}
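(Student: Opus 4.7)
The plan is to argue by contradiction: supposing some $p \in \bb{P}$ forces a $\bb{P}$-name $\dot{C}$ to be a club in $\lambda$ disjoint from $\check{S}$, I will use approachability of a carefully chosen point of $S$ to build, inside a suitably closed elementary submodel, a decreasing sequence of conditions of length $\kappa$ below $p$ whose lower bound forces that point of $S$ into $\dot{C}$.

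Fix $\vec{a} = \langle a_\alpha \mid \alpha < \lambda \rangle$ and a club $D \subseteq \lambda$ witnessing $S \in I[\lambda]$, a sufficiently large regular $\theta$, and a well-ordering $<^*$ of $H_\theta$. Using the standard equivalence between membership in $I[\lambda]$ and the existence of internally approachable submodels (see \cite{eisworth}), I choose a continuous internally approachable chain $\langle M_i \mid i < \kappa \rangle$ of elementary submodels of $H_\theta$, each of size less than $\kappa$ and containing $\{<^*, \bb{P}, \dot{C}, p, \vec{a}, D\}$, with $\langle M_j \mid j \leq i \rangle \in M_{i+1}$ for every $i < \kappa$, such that $M := \bigcup_{i < \kappa} M_i$ satisfies $\gamma := \sup(M \cap \lambda) \in S \cap D$ and $M \cap \lambda = \gamma$. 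Set $\alpha_i := \sup(M_i \cap \lambda) < \gamma$, so that $\langle \alpha_i \mid i < \kappa \rangle$ is strictly increasing with supremum $\gamma$.

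Working in $V$, I recursively construct a $\leq$-decreasing sequence $\langle q_i \mid i < \kappa \rangle$ in $\bb{P}$ below $p$ together with ordinals $\langle \beta_i \mid i < \kappa \rangle$ so that each $q_i, \beta_i \in M$, $\alpha_i < \beta_i$, and $q_{i+1} \Vdash \check{\beta}_i \in \dot{C}$. At a successor stage, elementarity of $M$ combined with the forced unboundedness of $\dot{C}$ furnishes the $<^*$-least pair $(q_{i+1}, \beta_i) \in M$ satisfying the required properties. At a limit stage $i < \kappa$, internal approachability ensures that the entire partial sequence $\langle q_j, \beta_j \mid j < i \rangle$, being defined by the very recursion from parameters in $M_i$, already lies in $M$; then the $\kappa^+$-closure of $\bb{P}$ together with elementarity of $M$ yield a lower bound in $M$, of which I pick the $<^*$-least. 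A final application of $\kappa^+$-closure produces a lower bound $q^*$ of the full sequence. Since each $\beta_i \in M \cap \lambda = \gamma$ and $\sup_i \beta_i \geq \sup_i \alpha_i = \gamma$, we obtain $\sup_i \beta_i = \gamma$, so $q^*$ forces $\gamma$ to be a limit point, hence an element, of the closed set $\dot{C}$, contradicting $\gamma \in S$ and $p \Vdash \dot{C} \cap \check{S} = \emptyset$.

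The main obstacle is making the recursion survive its $\kappa$ limit stages while remaining inside $M$: because $M$ itself is not $<\kappa$-closed in $V$, I need to present $M$ as the union of an internally approachable chain, precisely the structure provided by the hypothesis $S \in I[\lambda]$, so that each partial output of the recursion is captured by some stage of the chain and hence lies in $M$.
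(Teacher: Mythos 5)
The paper offers no proof of this statement: it is quoted verbatim as a Fact from Shelah \cite{shelah_col}, so there is no internal argument to compare against. Your proposal reconstructs what is indeed the standard proof of Shelah's theorem, and its architecture is sound: assume some $p$ forces a club $\dot{C}$ disjoint from $\check{S}$, trap a point $\gamma \in S$ with an internally approachable chain, run a uniformly definable ($<^*$-least) recursion of length $\kappa$ whose proper initial segments are captured by the chain and hence lie in $M$, and invoke $\kappa^+$-closure in $V$ only for the final lower bound, which then forces $\gamma \in \dot{C}$. You correctly identify the crux, namely that internal approachability --- exactly what $S \in I[\lambda]$ buys --- is what lets the recursion survive limit stages of uncountable length while staying inside $M$.

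Two repairs are needed. First, your requirement $M \cap \lambda = \gamma$ is unsatisfiable except when $\gamma < \kappa^+$: since $|M| \leq \kappa$, transitivity of $M \cap \lambda$ would force $|\gamma| \leq \kappa$. In particular it fails in precisely the situation where the paper applies this Fact ($\kappa = \aleph_{n+1}$, $\lambda = \aleph_{\omega+1}$, where club-many points of $S$ lie above $\kappa^+$). Fortunately your argument never uses transitivity: the only consequence you draw is $\beta_i < \gamma$, which already follows from $\beta_i \in M \cap \lambda$ together with $\gamma = \sup(M \cap \lambda)$ and $\gamma \notin M$; so simply delete that clause. Second, the appeal to ``the standard equivalence'' should be replaced by the one direction you actually need, and it requires $\kappa > \omega$: fix $\vec{a}$ and a club $D$ witnessing $S \in I[\lambda]$, a long continuous internally approachable chain $\langle N_\alpha \mid \alpha < \lambda \rangle$ of models of size $< \lambda$ with all parameters in $N_0$, and $\gamma \in S \cap D$ in the club of $\alpha$ with $N_\alpha \cap \lambda = \alpha$ (this uses stationarity of $S$); approachability then yields $A \subseteq \gamma$ cofinal of order type $\kappa$ all of whose proper initial segments lie in $N_\gamma$, and the $<^*$-Skolem hulls of these initial segments (together with the segments as elements) give your chain $\langle M_i \mid i < \kappa \rangle$, with $\sup(M \cap \lambda) = \gamma$ because every hull is contained in $N_\gamma$. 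Note also that for $\kappa = \omega$ the phrase ``models of size less than $\kappa$'' is vacuous --- elementary submodels of $H(\theta)$ are infinite --- but there no approachability is needed at all: a single countable $M \prec H(\theta)$ with $\sup(M \cap \lambda) \in S$ suffices, since finite partial runs of the recursion belong to $M$ automatically. With these adjustments your proof is correct.
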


\section{Preservation lemmas} \label{preservationSect}

In this section, we present two preservation lemmas for narrow systems, each a slight improvement of a similar lemma of Sinapova \cite[Theorem 14]{sinapova}. 
For convenience, we first introduce the following definition.

\begin{definition} \label{full_set_def}
	Let $\lambda$ be an uncountable regular cardinal, let $S = \langle I \times \kappa, \mathcal{R} \rangle$ be a narrow $\lambda$-system, and let $\theta = \mathrm{width}(S)$. $\bar{b} = \{b_\gamma \mid \gamma < \theta \}$ is a \emph{full set of branches through S} if:
	\begin{enumerate}
    \item{for all $\gamma < \theta$, there is $R \in \mathcal{R}$ such that $b_\gamma$ is a branch of $S$ through $R$;}
		\item{for all $\alpha \in I$, there is $\gamma < \theta$ such that $b_\gamma \cap S_\alpha \not= \emptyset$.}
	\end{enumerate}
\end{definition}

\begin{remark}
	Note that, since $\lambda$ is regular and $\theta < \lambda$, condition (2) in the above definition implies that, for some $\gamma < \theta$, $b_\gamma$ is a cofinal branch.
\end{remark}

\begin{lemma} \label{preservationtheorem}
	Suppose that $\lambda$ is an uncountable cardinal, $S = \langle I \times \kappa, \mathcal{R} \rangle$ is a narrow $\lambda$-system, $\mathrm{width}(S) = \theta$, $\bb{P}$ is a $\theta^+$-closed forcing poset, $G$ is $\bb{P}$-generic over $V$, and, in $V[G]$, there is a full set of branches through $S$. Then there is a cofinal branch through $S$ in $V$.
\end{lemma}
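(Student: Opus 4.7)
The plan is to argue by contradiction: suppose that $S$ admits no cofinal branch in $V$ and derive a cardinality contradiction at a single level of $S$. First, I would invoke Proposition \ref{singlerelation} to reduce to the case $\mathcal{R}=\{R\}$, observing that the construction there yields cofinal branches in $S$ if and only if its single-relation replacement does. Next, by the remark following Definition \ref{full_set_def}, the full set of branches in $V[G]$ produces a single cofinal branch through $S$ in $V[G]$, so fix $p\in\mathbb{P}$ and a $\mathbb{P}$-name $\dot{b}$ with $p\Vdash``\dot{b}$ is a cofinal branch through $S$''. The core observation is that for every $q\leq p$ and every $\gamma<\lambda$ there exist $\alpha\in I\cap(\gamma,\lambda)$, distinct $u^0,u^1\in S_\alpha$, and extensions $q^0,q^1\leq q$ with $q^i\Vdash u^i\in\dot{b}$; otherwise the unique elements forced below $q$ into $\dot{b}$ would assemble, via the tree-likeness of $R$, into a cofinal branch through $S$ in $V$. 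A parallel tree-like argument shows that such a split at level $\alpha$ propagates upward to any prescribed $\alpha'\in I$ with $\alpha'\geq\alpha$.

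Let $\xi^*$ be the least cardinal with $2^{\xi^*}\geq\theta^+$; by Cantor's theorem, $\xi^*\leq\theta<\theta^+$. I would then construct by recursion on $\xi\leq\xi^*$ a system $\{p_s\mid s\in{}^{\leq\xi^*}2\}$ of conditions and an increasing sequence $\langle\alpha_\xi\in I\mid\xi<\xi^*\rangle$ such that $p_\emptyset=p$; for every $s\in {}^{\xi}2$, the conditions $p_{s\frown 0}$ and $p_{s\frown 1}$ extend $p_s$ and force distinct elements of $S_{\alpha_\xi}$ to lie in $\dot{b}$; and at each limit $\xi$, $p_s$ is a lower bound of $\langle p_{s\restriction\eta}\mid\eta<\xi\rangle$, which exists because $\mathbb{P}$ is $\theta^+$-closed and this sequence has length $|\xi|\leq\xi^*\leq\theta<\theta^+$. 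At stage $\xi<\xi^*$ there are $2^{|\xi|}<\theta^+<\lambda$ conditions $p_s$, each equipped with an upward-propagated splitting level below $\lambda$, so by regularity of $\lambda$ one may choose a uniform $\alpha_\xi\in I$ above all of them and above every previously chosen $\alpha_\eta$.

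Finally, pick $\alpha^*\in I$ above $\sup_{\xi<\xi^*}\alpha_\xi$, which exists since this supremum has cofinality at most $\xi^*\leq\theta<\lambda$, and for each $s\in {}^{\xi^*}2$ extend $p_s$ to some $p'_s$ forcing a specific $v_s\in S_{\alpha^*}$ to lie in $\dot{b}$. For distinct $s,t\in {}^{\xi^*}2$ differing first at stage $\xi$, tree-likeness of $R$ forces $v_s\neq v_t$: otherwise the two distinct predecessors forced at level $\alpha_\xi$ would both be $R$-below the common value and hence $R$-comparable and equal, a contradiction. Thus $|S_{\alpha^*}|\geq 2^{\xi^*}\geq\theta^+$, contradicting the width bound $|S_{\alpha^*}|\leq\theta$. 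I expect the main obstacle to be the bookkeeping in the recursive construction, particularly verifying at every stage $\xi<\xi^*$ that a uniform splitting level $\alpha_\xi\in I$ can be located below $\lambda$; this is precisely where the narrowness hypothesis $\theta^+<\lambda$ together with the regularity of $\lambda$ is essential, as it keeps the sub-$\theta^+$-many per-node splitting levels from overflowing $\lambda$.
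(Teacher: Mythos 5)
There is a genuine gap, and it occurs at your very first move: replacing the full set of branches by a single cofinal-branch name $\dot{b}$. (The reduction to a single relation via Proposition \ref{singlerelation} is fine.) A cofinal branch of a system is only required to meet \emph{unboundedly many} levels, and which levels it meets is not decided by $p$: different extensions of a condition can force $\dot{b}$ to meet different unbounded subsets of $I$. Consequently you cannot, as your construction demands, extend a given condition to force a node of $\dot{b}$ into a \emph{prescribed} level: neither the uniform splitting levels $\alpha_\xi$ (two extensions of each $p_s$ forcing distinct elements of the common level $S_{\alpha_\xi}$ into $\dot{b}$) nor the final common level $\alpha^*$ (each $p'_s$ forcing some $v_s \in S_{\alpha^*}$ into $\dot{b}$) can be arranged. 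Your ``propagation'' step suffers the same defect: incomparability does propagate upward (as in Claim \ref{oneextensionclaim}), but only to nodes at \emph{some} levels above a given bound, not at a prescribed common level. Relatedly, the ``otherwise'' justification of your core observation fails: if no two extensions of $q$ force distinct nodes of a common level into $\dot{b}$, the set of forceable nodes has at most one point per level, but two such nodes at different levels, forced by incompatible conditions, need not be $R$-comparable, so this set need not be a branch; the correct dichotomy, as in Claim \ref{split_claim}, is phrased in terms of $R$-incomparable nodes at arbitrary levels. If you repair the construction accordingly, allowing splits and final nodes at uncontrolled levels, the first-difference argument still shows $s \mapsto v_s$ is injective, but the $\theta^+$-many nodes $v_s$ are now spread over possibly $\theta^+$-many levels of $S_{\geq\alpha^*}$, and no contradiction with the width bound $|S_\alpha| \leq \theta$ arises.

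This is precisely why the paper never passes to a single branch. Clause (2) of Definition \ref{full_set_def} guarantees that \emph{every} level $\alpha \in I$ --- in particular the one fixed level $\beta^*$ chosen at the end --- is met by some $\dot{b}_\gamma$, though one cannot know in advance which $\gamma$. The paper therefore splits all potentially cofinal branches $\gamma \in A$ simultaneously at each node of the tree of conditions (Claim \ref{manyextensionclaim}), and at the end extends each of $\theta^+$-many lower bounds $p_f$ to catch a node $v_f \in S_{\beta^*}$ of some branch $\dot{b}_{\gamma_f}$; the pigeonhole is then applied to the pairs $(v_f, \gamma_f) \in S_{\beta^*} \times A$, of which there are only $\theta$-many possible values, producing $f \neq g$ agreeing on both coordinates, after which the split for that particular $\gamma$ at the first difference of $f$ and $g$ contradicts tree-likeness. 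In effect your proposal attempts to prove the stronger assertion that a $\theta^+$-closed forcing cannot add even a single cofinal branch to a branchless narrow system; the paper's method does not establish that statement, and your construction does not either.
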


\begin{remark}
  This lemma improves Sinapova's lemma in that it applies to $\theta^+$-closed forcing posets, whereas \cite[Theorem 14]{sinapova} applies only to $\theta^{++}$-closed posets.
\end{remark}

\begin{proof}
	We work in $V$, supposing for the sake of contradiction that there is no cofinal branch through $S$. For $\gamma < \theta$, let $\dot{b}_\gamma$ be a $\bb{P}$-name, and let $p^* \in \bb{P}$ be such that $p^* \Vdash ``\{\dot{b}_\gamma \mid \gamma < \theta\}$ is a full set of branches through S.$"$ Since $\bb{P}$ is $\theta^+$-closed, we may assume that there are a nonempty $A \subseteq \theta$, $\alpha^* < \lambda$, and $r:A \rightarrow \mathcal{R}$ such that:
	\begin{itemize}
		\item{for all $\gamma < \theta, p^* \Vdash ``\dot{b}_\gamma$ is a cofinal branch iff $\gamma \in A"$;}
		\item{for all $\gamma \in \theta \setminus A$, $p^* \Vdash ``\dot{b}_\gamma \subseteq S_{<\alpha^*}"$;}
		\item{for all $\gamma \in A$, $p^* \Vdash ``\dot{b}_\gamma$ is a branch through $r(\gamma)."$}
	\end{itemize}
	\begin{claim} \label{split_claim}
		For every $p \leq p^*$ and every $\gamma \in A$, there are $q_0, q_1 \leq p$ and $u_0, u_1 \in S$ such that:
		\begin{enumerate}
			\item{for $i < 2$, $q_i \Vdash ``u_i \in \dot{b}_\gamma"$;}
			\item{$u_0 \perp_{r(\gamma)} u_1$.}
		\end{enumerate}
	\end{claim}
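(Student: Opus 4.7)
The plan is to argue by contradiction: from the failure of the claim at some $p \leq p^*$ and $\gamma \in A$, construct a cofinal branch through $S$ in $V$, contradicting the standing assumption of the proof. Specifically, suppose there are $p \leq p^*$ and $\gamma \in A$ such that for every $q_0, q_1 \leq p$ and every $u_0, u_1 \in S$ with $q_i \Vdash ``u_i \in \dot{b}_\gamma"$ for $i < 2$, the elements $u_0$ and $u_1$ are $r(\gamma)$-comparable.

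Working in $V$, I would form the set
\[
B = \{u \in S \mid \exists q \leq p \ (q \Vdash ``u \in \dot{b}_\gamma")\}.
\]
By the assumption on $p$ and $\gamma$, any two members of $B$ are $r(\gamma)$-comparable, so $B$ is a branch of $S$ through $r(\gamma)$. To see $B$ is cofinal, fix $\alpha_0 < \lambda$; since $p \leq p^*$ and $p^* \Vdash ``\dot{b}_\gamma$ is a cofinal branch$"$, some $q \leq p$ forces some $u \in S$ with first coordinate at least $\alpha_0$ to lie in $\dot{b}_\gamma$, and this $u$ belongs to $B$. Thus $B$ meets $S_\alpha$ for unboundedly many $\alpha \in I$, contradicting the assumption that $S$ has no cofinal branch in $V$.

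This is a standard branch-extraction argument and I do not expect a real obstacle. The only delicate point is verifying that $B$ genuinely forms a branch through $r(\gamma)$: pairwise $r(\gamma)$-comparability, combined with condition (3) of Definition \ref{system_def} (which prevents distinct elements on the same level from being $r(\gamma)$-comparable), ensures $|B \cap S_\alpha| \leq 1$ for every $\alpha \in I$, while transitivity and tree-likeness of $r(\gamma)$ handle the remaining pairs automatically.
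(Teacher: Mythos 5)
Your proposal is correct and is essentially the paper's own proof: the paper likewise assumes the claim fails for some $p \leq p^*$ and $\gamma \in A$ and observes that $b = \{u \in S \mid \exists q \leq p\, (q \Vdash ``u \in \dot{b}_\gamma")\}$ is then a cofinal branch through $S$ in $V$, contradicting the standing assumption. You merely spell out the details the paper leaves implicit (pairwise $r(\gamma)$-comparability from the failure of the claim, and cofinality from $p^* \Vdash ``\dot{b}_\gamma$ is cofinal$"$ for $\gamma \in A$), which is fine.
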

	\begin{proof}
		Suppose not, and let $p$ and $\gamma$ form a counterexample. Then $b = \{u \in S \mid$ for some $q \leq p$, $q \Vdash ``u \in \dot{b}_\gamma"\}$ is a cofinal branch through $S$ in $V$.
	\end{proof}
	\begin{claim} \label{oneextensionclaim}
		For every $p_0, p_1 \leq p^*$ and every $\gamma \in A$, there are $q_0 \leq p_0$, $q_1 \leq p_1$, and $u_0, u_1 \in S$ such that:
		\begin{enumerate}
			\item{for $i < 2$, $q_i \Vdash ``u_i \in \dot{b}_\gamma"$;}
			\item{$u_0 \perp_{r(\gamma)} u_1$.}
		\end{enumerate}
	\end{claim}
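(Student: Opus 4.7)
The plan is to prove Claim \ref{oneextensionclaim} by contradiction, reducing it to Claim \ref{split_claim} through the tree-like property of $r(\gamma)$ together with the cofinality of the generic branch. Suppose the claim fails, so we fix $p_0, p_1 \leq p^*$ and $\gamma \in A$ witnessing the failure; that is, whenever $q_0 \leq p_0$, $q_1 \leq p_1$, and $u_0, u_1 \in S$ satisfy $q_i \Vdash ``u_i \in \dot{b}_\gamma"$ for $i < 2$, the elements $u_0$ and $u_1$ are $r(\gamma)$-comparable.

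First I would apply Claim \ref{split_claim} to $p_1$ and $\gamma$, obtaining $q^1_0, q^1_1 \leq p_1$ and $v_0, v_1 \in S$ with $q^1_i \Vdash ``v_i \in \dot{b}_\gamma"$ and $v_0 \perp_{r(\gamma)} v_1$. Let $\alpha^{**} < \lambda$ be a common strict upper bound for the levels of $v_0$ and $v_1$. Next, since $p_0 \leq p^*$ and $p^* \Vdash ``\dot{b}_\gamma$ is a cofinal branch through $r(\gamma)",$ there must exist $q_0 \leq p_0$, an ordinal $\beta \in I$ with $\beta > \alpha^{**}$, and an element $u \in S_\beta$ such that $q_0 \Vdash ``u \in \dot{b}_\gamma"$.

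Now I would invoke the failure assumption twice. Taking $q_0$ with $q^1_0$ gives that $u$ and $v_0$ are $r(\gamma)$-comparable, and condition (3) of Definition \ref{system_def} (which forces any instance of $<_{r(\gamma)}$ to increase the first coordinate) together with $\beta > \alpha^{**}$ forces $v_0 <_{r(\gamma)} u$; analogously, $v_1 <_{r(\gamma)} u$. But then the tree-likeness of $r(\gamma)$ immediately yields that $v_0$ and $v_1$ are $r(\gamma)$-comparable, contradicting $v_0 \perp_{r(\gamma)} v_1$.

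The main potential obstacle is verifying the step that produces $u$ at an arbitrarily high level: one needs that not only does $p_0$ force cofinality of $\dot{b}_\gamma$, but also that a \emph{particular} element $u \in S$ at a specified level can be named by an extension of $p_0$. This follows routinely from the forcing theorem applied to the statement ``there exist $\beta > \alpha^{**}$ in $I$ and $u \in S_\beta$ with $u \in \dot{b}_\gamma$," so no genuine difficulty arises. Everything else in the argument is bookkeeping using transitivity and the level-respecting property of the relations in $\mathcal{R}$.
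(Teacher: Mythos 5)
Your proof is correct and is essentially the paper's own argument: apply Claim \ref{split_claim} to one of the two conditions to get an incomparable pair, extend the other condition to force a node of $\dot{b}_\gamma$ at a strictly higher level, and use tree-likeness of $r(\gamma)$ (together with the level-respecting condition (3) of Definition \ref{system_def}) to conclude. The only differences are cosmetic: you phrase it as a contradiction and swap the roles of $p_0$ and $p_1$, while the paper argues directly, concluding that the high node must be $r(\gamma)$-incomparable with one element of the pair and taking that element as a witness.
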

	\begin{proof}
		First, apply Claim \ref{split_claim} to obtain $q^0_0, q^0_1 \leq p_0$ and $u^0_0, u^0_1 \in S$ such that $q^0_i \Vdash ``u^0_i \in \dot{b}_\gamma"$ and $u^0_0 \perp_{r(\gamma)} u^0_1$. Let $\beta^* <\lambda$ be such that $u^0_0, u^0_1 \in S_{<\beta^*}$. Find $q_1 \leq p_1$ and $u_1 \in S_{\geq \beta^*}$ such that $q_1 \Vdash ``u_1 \in \dot{b}_\gamma."$ If $(u^0_0, u_1)$ and $(u^0_1, u_1)$ are both in $r(\gamma)$, then, since $r(\gamma)$ is tree-like, $u^0_0$ and $u^0_1$ are $r(\gamma)$-comparable, which is a contradiction. Thus, there is $i^* < 2$ such that $u^0_{i^*} \perp_{r(\gamma)} u_1$. Let $q_0 = q^0_{i^*}$ and $u_0 = u^0_{i^*}$. Then $q_0, q_1, u_0$, and $u_1$ are as desired.
	\end{proof}
	\begin{claim} \label{manyextensionclaim}
		For every $p \leq p^*$, there are $q_0, q_1 \leq p$ and $\{u^\gamma_i \mid \gamma \in A, i < 2\} \subseteq S$ such that:
		\begin{enumerate}
			\item{for every $\gamma \in A$ and $i < 2$, $q_i \Vdash ``u^\gamma_i \in \dot{b}_\gamma"$;}
			\item{for every $\gamma \in A$, $u^\gamma_0 \perp_{r(\gamma)} u^\gamma_1$.}
		\end{enumerate}
	\end{claim}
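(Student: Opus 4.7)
The plan is to iterate Claim \ref{oneextensionclaim} along an enumeration of $A$, using the $\theta^+$-closure of $\mathbb{P}$ to continue the construction through the $|A| \leq \theta$ many steps.

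Fix $p \leq p^*$ and enumerate $A$ as $\langle \gamma_\xi \mid \xi < |A| \rangle$ (so $|A| \leq \theta$). I will construct, by recursion on $\xi \leq |A|$, two decreasing sequences of conditions $\langle q^\xi_0 \mid \xi \leq |A| \rangle$ and $\langle q^\xi_1 \mid \xi \leq |A| \rangle$ below $p$, together with elements $u^{\gamma_\xi}_0, u^{\gamma_\xi}_1 \in S$ for each $\xi < |A|$, such that for every $\xi < |A|$ and every $i < 2$:
\begin{itemize}
  \item $q^{\xi+1}_i \Vdash ``u^{\gamma_\xi}_i \in \dot{b}_{\gamma_\xi}"$, and
  \item $u^{\gamma_\xi}_0 \perp_{r(\gamma_\xi)} u^{\gamma_\xi}_1$.
\end{itemize}

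Set $q^0_0 = q^0_1 = p$. At a successor stage $\xi + 1$, apply Claim \ref{oneextensionclaim} to the pair $q^\xi_0, q^\xi_1 \leq p^*$ and to the ordinal $\gamma_\xi \in A$ to obtain $q^{\xi+1}_0 \leq q^\xi_0$, $q^{\xi+1}_1 \leq q^\xi_1$, and $u^{\gamma_\xi}_0, u^{\gamma_\xi}_1 \in S$ satisfying the two bulleted requirements. At a limit stage $\xi \leq |A|$, use the $\theta^+$-closure of $\mathbb{P}$ (together with $\xi \leq \theta$) to pick lower bounds $q^\xi_0$ and $q^\xi_1$ of $\langle q^\eta_0 \mid \eta < \xi \rangle$ and $\langle q^\eta_1 \mid \eta < \xi \rangle$, respectively.

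Finally, let $q_0 = q^{|A|}_0$ and $q_1 = q^{|A|}_1$, and take the elements $\{u^\gamma_i \mid \gamma \in A, \, i < 2\}$ produced at the successor steps. For any $\gamma = \gamma_\xi \in A$ and $i < 2$, we have $q_i \leq q^{\xi+1}_i$, and therefore $q_i \Vdash ``u^\gamma_i \in \dot{b}_\gamma"$; the incomparability condition $u^\gamma_0 \perp_{r(\gamma)} u^\gamma_1$ is built into the construction. Thus $q_0, q_1$ and $\{u^\gamma_i\}$ are as required.

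The only potential obstacle is making sure the closure of $\mathbb{P}$ is enough to sustain the recursion. Since $|A| \leq \theta$, all limit stages we encounter have length $\leq \theta < \theta^+$, so $\theta^+$-closure of $\mathbb{P}$ (which was the hypothesis improved in this version of the preservation lemma) suffices to pick the required lower bounds; no trickier bookkeeping is needed.
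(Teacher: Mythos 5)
Your proposal is correct and follows essentially the same route as the paper: the paper's proof also iterates Claim \ref{oneextensionclaim} through the elements of $A$, building two decreasing sequences of conditions (indexed by $\gamma < \theta$ rather than by an enumeration of $A$, taking trivial steps when $\gamma \notin A$) and using the $\theta^+$-closure of $\mathbb{P}$ to pass through limits and to obtain the final lower bounds $q_0, q_1$.
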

	\begin{proof}
		We recursively build two decreasing sequences, $\langle q^0_\gamma \mid \gamma < \theta \rangle$ and $\langle q^1_\gamma \mid \gamma < \theta \rangle$ from $\bb{P}$ together with nodes from $S$, $\{u^\gamma_i \mid \gamma \in A, i < 2\}$, as follows.

		First, let $q^0_0 = q^1_0 = p$. If $\gamma < \theta$ is a limit ordinal and $i < 2$, let $q^i_\gamma$ be a lower bound for $\langle q^i_\xi \mid \xi < \gamma \rangle$. If $q^0_\gamma, q^1_\gamma$ have been defined and $\gamma \not\in A$, let $q^i_{\gamma + 1} = q^i_\gamma$ for $i < 2$. Finally, if $q^0_\gamma$ and $q^1_\gamma$ have been defined and $\gamma \in A$, apply Claim \ref{oneextensionclaim} to $q^0_\gamma$, $q^1_\gamma$, and $\gamma$ to obtain $q^0_{\gamma+1} \leq q^0_\gamma$, $q^1_{\gamma+1} \leq q^1_\gamma$, and $u^\gamma_0$, $u^\gamma_1 \in S$ such that:
		\begin{itemize}
			\item{for $i < 2$, $q^i_{\gamma + 1} \Vdash ``u^\gamma_i \in \dot{b}_\gamma"$;}
			\item{$u^\gamma_0 \perp_{r(\gamma)} u^\gamma_1$.}
		\end{itemize}
		At the end of the construction, for $i < 2$, let $q_i$ be a lower bound for $\langle q^i_\gamma \mid \gamma < \theta \rangle$. Then $q_0, q_1$, and $\{u^\gamma_i \mid \gamma \in A, i < 2\}$ are as desired.
	\end{proof}
	Now use Claim \ref{manyextensionclaim} and the closure of $\bb{P}$ to recursively build a tree of conditions $\{p_\sigma \mid \sigma \in {^{<\theta}}2 \}$ and nodes $\{u^{\sigma, \gamma}_i \mid \sigma \in {^{<\theta}}2, \gamma \in A, i < 2 \}$ in $S$ as follows.

	Let $p_\emptyset = p^*$. If $\eta < \theta$ is a limit ordinal, $\sigma \in {^\eta}2$, and $p_{\sigma \restriction \xi}$ has been defined for all $\xi < \eta$, let $p_\sigma$ be a lower bound for $\langle p_{\sigma \restriction \xi} \mid \xi < \eta \rangle$. If $\sigma \in {^{<\theta}}2$ and $p_\sigma$ has been defined, apply Claim \ref{manyextensionclaim} to $p_\sigma$ to obtain $p_{\sigma ^\frown \langle 0 \rangle}$, $p_{\sigma ^\frown \langle 1 \rangle} \leq p_\sigma$ and nodes $\{u^{\sigma, \gamma}_i \mid \gamma \in A, i < 2\}$ in $S$ such that:
	\begin{itemize}
		\item{for every $\gamma \in A$ and $i < 2$, $p_{\sigma ^\frown \langle i \rangle} \Vdash ``u^{\sigma, \gamma}_i \in \dot{b}_\gamma;"$}
		\item{for every $\gamma \in A$, $u^{\sigma, \gamma}_0 \perp_{r(\gamma)} u^{\sigma, \gamma}_1$.}
	\end{itemize} 
	For each $f \in {^\theta}2$, let $p_f$ be a lower bound for $\langle p_{f \restriction \eta} \mid \eta < \theta \rangle$. Choose $B \subseteq {^\theta}2$ with $|B| = \theta^+$, and find $\beta^* \in (I \setminus \alpha^*)$ such that $u^{f \restriction \eta, \gamma}_i \in S_{<\beta^*}$ for all $f \in B$, $\eta < \theta$, $\gamma \in A$, and $i < 2$. This is possible, since $\theta^+ < \lambda$ by the assumption that $S$ is a narrow system.

	For each $f \in B$, find $q_f \leq p_f$, $v_f \in S_{\beta^*}$, and $\gamma_f \in A$ such that $q_f \Vdash ``v_f \in \dot{b}_{\gamma_f}."$ Since $|B| = \theta^+$, we can find $v \in S_{\beta^*}$, $\gamma \in A$, and $f \not= g$, both in $B$, such that $v_f = v_g = v$ and $\gamma_f = \gamma_g = \gamma$. Let $\eta^*$ be the least $\eta < \theta$ such that $f(\eta) \not= g(\eta)$, and let $\sigma = f \restriction \eta^* = g \restriction \eta^*$. Assume without loss of generality that $f(\eta^*) = 0$ and $g(\eta^*) = 1$. Then $q_f \Vdash ``u^{\sigma, \gamma}_0, v \in \dot{b}_\gamma"$, so $u^{\sigma, \gamma}_0 <_{r(\gamma)} v$. Similarly, $q_g \Vdash ``u^{\sigma, \gamma}_1, v \in \dot{b}_\gamma"$, so $u^{\sigma, \gamma}_1 <_{r(\gamma)} v$. Thus, since $r(\gamma)$ is tree-like, $u^{\sigma, \gamma}_0$ and $u^{\sigma, \gamma}_1$ are $r(\gamma)$-comparable, contradicting $u^{\sigma, \gamma}_0 \perp_{r(\gamma)} u^{\sigma, \gamma}_1$.
\end{proof}

The next variation on Sinapova's theorem is due to Neeman.

\begin{lemma}[\cite{neeman}, Lemma 3.3 and Remark 3.4] \label{neemanlemma}
	Suppose that $\lambda$ is a regular, uncountable cardinal, $S = \langle I \times \kappa, \mathcal{R} \rangle$ is a narrow $\lambda$-system, and $\mathrm{width}(S) = \theta$. Suppose $\bb{P}$ is a forcing poset, and let $\bb{P}^{\theta^+}$ denote the full-support product of $\theta^+$ copies of $\bb{P}$. Suppose moreover that $\bb{P}^{\theta^+}$ is $\theta^{++}$-distributive, $G$ is $\bb{P}$-generic over $V$, and, in $V[G]$, there is a full set of branches through $S$. Then there is a cofinal branch through $S$ in $V$.
\end{lemma}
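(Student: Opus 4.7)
The plan is to adapt the proof of Lemma \ref{preservationtheorem}, replacing each use of the $\theta^+$-closure of $\bb{P}$ by an application of the $\theta^{++}$-distributivity of $\bb{P}^{\theta^+}$ (which transfers to any factor, and in particular to $\bb{P}$ itself). Assume for contradiction that $S$ has no cofinal branch in $V$. Fix $\bb{P}$-names $\dot{b}_\gamma$ for $\gamma < \theta$ and a condition $p^*$ forcing that $\{\dot{b}_\gamma \mid \gamma < \theta\}$ is a full set of branches through $S$. Using the $\theta^+$-distributivity of $\bb{P}$, strengthen $p^*$ so that it decides a nonempty $A \subseteq \theta$, a function $r : A \to \mathcal{R}$, and an ordinal $\alpha^* < \lambda$ exactly as in the proof of Lemma \ref{preservationtheorem}. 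Claims \ref{split_claim} and \ref{oneextensionclaim} carry over verbatim, since their proofs invoke only tree-likeness and the nonexistence of a cofinal branch in $V$, and not any closure of $\bb{P}$.

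The new ingredient replaces Claim \ref{manyextensionclaim} and the tree of conditions by a parallel construction inside $\bb{P}^{\theta^+}$. For each pair $\xi \neq \xi'$ in $\theta^+$ and each $\gamma \in A$, let $D_{\xi, \xi', \gamma}$ consist of those $(q^\zeta)_{\zeta < \theta^+} \leq (p^*)_\zeta$ for which there exist $u^{\xi, \xi', \gamma}, u^{\xi', \xi, \gamma} \in S$ satisfying $u^{\xi, \xi', \gamma} \perp_{r(\gamma)} u^{\xi', \xi, \gamma}$, $q^\xi \Vdash u^{\xi, \xi', \gamma} \in \dot{b}_\gamma$, and $q^{\xi'} \Vdash u^{\xi', \xi, \gamma} \in \dot{b}_\gamma$. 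Applying Claim \ref{oneextensionclaim} to the pair $(q^\xi, q^{\xi'})$ while leaving the remaining coordinates untouched shows that each $D_{\xi, \xi', \gamma}$ is dense and open below $(p^*)_\zeta$, and there are only $\theta^+$ many such sets. The $\theta^{++}$-distributivity of $\bb{P}^{\theta^+}$ then produces $(q^\zeta)_\zeta$ in their common intersection, together with witnesses $u^{\xi, \xi', \gamma}$. Since there are $\theta^+$ witnesses, each at a level $< \lambda$, the regularity of $\lambda$ together with $\theta^+ < \lambda$ allows us to pick $\beta^* \in I$ above $\alpha^*$ and above every witness level.

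Next, for each $\xi < \theta^+$, let $F^\xi$ consist of those $(r^\zeta)_\zeta \leq (q^\zeta)_\zeta$ for which there exist $v^\xi \in S_{\beta^*}$ and $\gamma^\xi \in A$ with $r^\xi \Vdash v^\xi \in \dot{b}_{\gamma^\xi}$. Density of $F^\xi$ follows from the fullness of the set of branches: $p^*$ forces some $\dot{b}_\gamma$ with $\gamma \in A$ to meet level $\beta^* \in I$, so on the $\xi$-th coordinate we may refine to decide $\gamma^\xi$ and $v^\xi$ while leaving the other coordinates alone; openness is immediate. Intersecting the $\theta^+$ sets $F^\xi$ via $\theta^{++}$-distributivity, we obtain $(\tilde{q}^\zeta)_\zeta \in \bigcap_\xi F^\xi$ and values $(v^\xi, \gamma^\xi) \in S_{\beta^*} \times A$ for every $\xi$. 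Since $|S_{\beta^*} \times A| \leq \kappa \cdot \theta \leq \theta$, pigeonhole yields distinct $\xi, \xi'$ with $(v^\xi, \gamma^\xi) = (v^{\xi'}, \gamma^{\xi'}) = (v, \gamma)$.

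The contradiction is now identical to that in Lemma \ref{preservationtheorem}: $\tilde{q}^\xi$ forces both $u^{\xi, \xi', \gamma}$ and $v$ into $\dot{b}_\gamma$, and since $u^{\xi, \xi', \gamma}$ lies at a level strictly below $\beta^*$, we obtain $u^{\xi, \xi', \gamma} <_{r(\gamma)} v$; symmetrically $u^{\xi', \xi, \gamma} <_{r(\gamma)} v$, whence tree-likeness of $r(\gamma)$ forces the two split nodes to be $r(\gamma)$-comparable, contradicting their choice from $D_{\xi, \xi', \gamma}$. The main subtlety I expect is the two-stage bookkeeping: the $D_{\xi, \xi', \gamma}$ intersections must be performed first in order to fix $\beta^* < \lambda$ above the witness levels, and only then can the $F^\xi$ intersections be assembled at this specific $\beta^*$; each stage requires only an intersection of $\theta^+$ dense open sets, so $\theta^{++}$-distributivity of $\bb{P}^{\theta^+}$ is exactly what is needed.
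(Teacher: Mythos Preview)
The paper does not actually supply a proof of this lemma; it is stated with a citation to Neeman and then used as a black box. Your argument is correct and is precisely the natural adaptation of the proof of Lemma~\ref{preservationtheorem}: instead of building a height-$\theta$ binary tree of conditions in $\bb{P}$ (which requires $\theta^+$-closure to take limits along branches), you let the $\theta^+$ coordinates of $\bb{P}^{\theta^+}$ play the role of the $\theta^+$-many branches of that tree, and replace the recursive construction by meeting $\theta^+$-many dense open sets, which is exactly what $\theta^{++}$-distributivity provides. Your two-stage bookkeeping (first the splitting sets $D_{\xi,\xi',\gamma}$ to fix witnesses and then $\beta^*$, second the sets $F^\xi$ at level $\beta^*$) is necessary and correctly handled, and the final pigeonhole and tree-like contradiction go through verbatim. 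This is essentially how the result is proved in \cite{neeman}.
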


\section{Weak square and the narrow system property} \label{weakSquareSect}

In this section, we obtain some consistency results about the narrow system property. In particular, we show that, 
unlike the tree property, the narrow system property is compatible with certain weak square principles. 
We also prove a global result about the consistency of the statement that all narrow systems have a cofinal branch. 
We first give a useful result about forcing the narrow system property at small cardinals.

\begin{theorem} \label{indestthm}
	Let $\mu < \lambda$ be infinite cardinals, with $\mu$ regular and $\lambda$ supercompact. Let $\bb{P} = \mathrm{Coll}(\mu, < \lambda)$. Then, in $V^{\mathbb{P}}$, $NSP(\mu, \geq \lambda)$ holds and moreover is indestructible under $\lambda$-directed closed set forcing.
\end{theorem}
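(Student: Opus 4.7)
My plan is to prove this by lifting a supercompact embedding of $V$ through $\mathbb{P} * \dot{\mathbb{Q}}$, extracting a full set of branches for any narrow system in the extension via elementarity applied to axiom~(4), and then invoking Lemma~\ref{preservationtheorem}. Fix a $\lambda$-directed closed $\mathbb{Q} \in V[G]$, let $H$ be $\mathbb{Q}$-generic over $V[G]$, and in $V[G][H]$ let $S = \langle I \times \kappa, \{R\} \rangle$ be a narrow $\nu$-system of width $\theta < \mu$, with $\nu \geq \lambda$ regular; by Remark~\ref{singlerelationremark} we may assume $|\mathcal{R}(S)| = 1$. Pick $\eta$ large enough to bound $\nu$, $|\mathbb{Q}|^{V[G]}$, and a $\mathbb{P}$-name for $\mathbb{Q}$, and take an $\eta$-supercompact $j : V \to M$ with $\mathrm{crit}(j) = \lambda$ and $j(\lambda) > \eta$. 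Since $\mathbb{Q}$ is $\mu$-closed of size less than $j(\lambda)$ in $V[G]$, Fact~\ref{lift} supplies a complete embedding of $\mathbb{P} * \dot{\mathbb{Q}}$ into $j(\mathbb{P}) = \mathrm{Coll}(\mu, < j(\lambda))$ whose quotient $\mathbb{R}$ is $\mu$-closed in $V[G][H]$. Let $K$ be $\mathbb{R}$-generic over $V[G][H]$, so that $V[G][H][K] = V[G^*]$ where $G^*$ is $j(\mathbb{P})$-generic over $V$, and lift to $j^+ : V[G] \to M[G^*]$. A standard master-condition argument---using that $j^+(\mathbb{Q})$ is $j(\lambda)$-directed closed in $M[G^*]$ and that $M[G^*]$ remains sufficiently closed in $V[G^*]$---produces $q^* \in j^+(\mathbb{Q})$ below every element of $j^+[H]$. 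Taking $L$ generic for $j^+(\mathbb{Q})/q^*$ over $V[G^*]$ then yields the further lift $j^{++} : V[G][H] \to M[G^*][H^*]$ with $H^*$ the filter on $j^+(\mathbb{Q})$ induced by $L$.

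Next I would harvest a full set of branches in $V[G^*][H^*]$. Since $\cf^{V[G][H]}(\nu) = \nu < j(\lambda)$, we have $\sup j^{++}[\nu] < j^{++}(\nu)$, so fix $\alpha^* \in j^{++}(I)$ with $\alpha^* \geq \sup j^{++}[\nu]$. By elementarity, $\mathrm{width}(j^{++}(S)) = j^{++}(\theta) = \theta$ (since $\theta < \lambda = \mathrm{crit}(j^{++})$), so setting $\kappa^* := \kappa^{j^{++}(S)}_{\alpha^*}$ we have $\kappa^* \leq \theta$. For each $\gamma < \kappa^*$, define
\[
b_\gamma = \{u \in S : j^{++}(u) <_{j^{++}(R)} (\alpha^*, \gamma)\}.
\]
Tree-likeness of $j^{++}(R)$ together with elementarity of $j^{++}$ ensure each $b_\gamma$ is an $R$-branch of $S$. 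For any $\alpha \in I$, axiom~(4) of $j^{++}(S)$ applied to the pair $(j^{++}(\alpha), \alpha^*)$ produces $\beta < \kappa$ and $\gamma < \kappa^*$ with $(j^{++}(\alpha), \beta) <_{j^{++}(R)} (\alpha^*, \gamma)$; since $\beta$ lies below $\mathrm{crit}(j^{++})$, this means $(\alpha, \beta) \in b_\gamma$. Hence $\{b_\gamma : \gamma < \kappa^*\}$, padded to size $\theta$ if necessary, is a full set of branches through $S$ in $V[G^*][H^*]$.

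Finally, I would invoke Lemma~\ref{preservationtheorem}. Viewed as an iteration over $V[G][H]$, the forcing $K * \dot{L}$ is $\mu$-closed: $K$ is $\mu$-closed in $V[G][H]$ by Fact~\ref{lift}, and the $j(\lambda)$-directed closure of $j^+(\mathbb{Q})/q^*$ in $M[G^*]$ together with the closure of $M[G^*]$ in $V[G^*]$ transfers enough closure of $L$'s poset over $V[G^*]$ to yield $\mu$-closure of the composite iteration. Since $\theta < \mu$ gives $\theta^+ \leq \mu$, Lemma~\ref{preservationtheorem} applied to $S$, the $\theta^+$-closed $K * \dot{L}$, and the full set of branches in $V[G^*][H^*] = V[G][H][K][L]$ produces a cofinal branch through $S$ already in $V[G][H]$. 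The main obstacle is the closure bookkeeping around the lift: verifying that $M[G^*]$ remains $\eta$-closed in $V[G^*]$ after forcing with $j(\mathbb{P})$ (which should follow from the $j(\lambda)$-cc of $j(\mathbb{P})$ in $M$ and the $\eta$-closure of $M$ in $V$ via a standard nice-names argument) and confirming that the inherited closure of $j^+(\mathbb{Q})/q^*$ over $V[G^*]$ is preserved when composed with $K$ in the iteration over $V[G][H]$.
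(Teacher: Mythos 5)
Your architecture is exactly that of the paper's proof: absorb $\mathbb{P} * \dot{\mathbb{Q}}$ into $j(\mathbb{P}) = \mathrm{Coll}(\mu, <j(\lambda))$ via Fact \ref{lift} with $\mu$-closed quotient $\mathbb{R}$, lift twice using a master condition obtained from the $j(\lambda)$-directed closure of $j^+(\mathbb{Q})$, read off branches $b_\gamma$ below the node $(\alpha^*, \gamma)$ at the first level of $j^{++}(I)$ past $\sup j^{++}[\nu]$, verify fullness via clause (4) of Definition \ref{system_def}, and pull a cofinal branch back to $V[G][H]$ with Lemma \ref{preservationtheorem} applied to the $\mu$-closed ($\geq \theta^+$-closed) composite $\mathbb{R} * j^+(\dot{\mathbb{Q}})$. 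All of that is correct and is the paper's argument.

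The one place your sketch goes wrong is the closure bookkeeping you yourself flag as the main obstacle, in both of the places you invoke it. First, the claim that $M[G^*]$ is $\eta$-closed in $V[G^*]$ ``by the $j(\lambda)$-cc of $j(\mathbb{P})$'' would fail as stated: $j(\lambda)$ is inaccessible in $M$, but for a $\delta$-supercompactness ultrapower it typically is not even a cardinal in $V$ (its $V$-cardinality is at most $2^{\eta^{<\lambda}}$), so $j(\mathbb{P})$ need not satisfy any chain condition in $V$ below $(|j(\lambda)|^{<\mu})^+$ --- already a single coordinate of the collapse gives a $V$-antichain of size $|j(\lambda)|^V$, which can exceed $\eta^+$. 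Fortunately you need much less: only ${}^{<\mu}M[G^*] \subseteq M[G^*]$ in $V[G^*]$, and this follows from the $\mu$-closure (hence $\mu$-distributivity) of $j(\mathbb{P}) \cong \mathbb{P} * \dot{\mathbb{Q}} * \dot{\mathbb{R}}$ in $V$: a $<\mu$-sequence in $V[G^*]$ of $M$-names already lies in $V$, hence in $M$ by ${}^\eta M \subseteq M$. This is exactly enough to transfer the $j(\lambda)$-directed closure of $j^+(\mathbb{Q})$ in $M[G^*]$ down to $\mu$-directed closure in $V[G^*]$, giving the $\mu$-closure of $\mathbb{R} * j^+(\dot{\mathbb{Q}})$ over $V[G][H]$ that Lemma \ref{preservationtheorem} needs (since $\theta < \mu$ gives $\theta^+ \leq \mu$). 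Second, for the master condition you should not route through closure of $M[G^*]$ in $V[G^*]$ at all: choose $\eta$ large enough that also $|\mathbb{P} * \dot{\mathbb{Q}}| \leq \eta$, so that $j \restriction (\mathbb{P} * \dot{\mathbb{Q}}) \in M$ already in $V$, before any forcing; then $\bar{H} = \{\, j(\dot{q})_{G^*} \mid (p, \dot{q}) \in G * H \,\}$ is definable inside $M[G^*]$ from $j \restriction (\mathbb{P} * \dot{\mathbb{Q}})$ and $G * H$, is a directed subset of $j^+(\mathbb{Q})$ of size $< \eta < j(\lambda)$, and the $j(\lambda)$-directed closure of $j^+(\mathbb{Q})$ in $M[G^*]$ yields $q^*$ directly. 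With those two repairs your proposal coincides with the paper's proof.
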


\begin{proof}
	Let $G$ be $\bb{P}$-generic over $V$. Since trivial forcing is $\lambda$-directed closed, it suffices to prove that if, in $V[G]$, $\bb{Q}$ is a $\lambda$-directed closed forcing poset and $H$ is $\bb{Q}$-generic over $V[G]$, then $NSP(\mu, \geq \lambda)$ holds in $V[G*H]$.

	Thus, let $\bb{Q}$ be $\lambda$-directed closed in $V[G]$, and let $H$ be $\bb{Q}$-generic over $V[G]$. 
  In $V[G*H]$, let $\nu \geq \lambda$ be a regular cardinal, 
  let $\kappa < \mu$, and let $S = \langle I \times \kappa, \mathcal{R} \rangle$ be a narrow $\nu$-system. 
  As usual, we assume that $\mathcal{R} = \{R\}$.

  In $V$, let $\dot{\bb{Q}}$ be a $\bb{P}$-name for $\bb{Q}$ and fix a cardinal $\delta > \nu$ such that $\bb{P} * \dot{\bb{Q}} \in V_\delta$. 
  Let $j:V \rightarrow M$ witness that $\lambda$ is $\delta$-supercompact. In particular, $\mathrm{crit}(j) = \lambda$, 
  $j(\lambda) > \delta$, and ${^\delta}M \subseteq M$. $j(\bb{P}) = \mathrm{Coll}(\mu, < j(\lambda))$, so, 
  by Fact \ref{lift}, the identity embedding $i:\bb{P} \rightarrow j(\bb{P})$ can be extended to a complete embedding 
  $k:\bb{P} * \dot{\bb{Q}} \rightarrow j(\bb{P})$ so that the quotient forcing $j(\bb{P})/j``(\bb{P} * \dot{\bb{Q}})$ is 
  $\mu$-closed. Therefore, letting $\dot{\bb{R}}$ be a $\bb{P} * \dot{\bb{Q}}$-name for this quotient forcing, 
  we have $j(\bb{P}) \cong \bb{P} * \dot{\bb{Q}} * \dot{\bb{R}}$ and $\bb{R}$ is $\mu$-closed in 
  $V^{\bb{P} * \dot{\bb{Q}}}$. Let $K$ be $\bb{R}$-generic over $V[G*H]$. Since $j(p) = p$ for all $p \in \bb{P}$, 
  we have $j``G \subseteq G*H*K$, so, in $V[G*H*K]$, we can extend $j$ to $j:V[G] \rightarrow M[G*H*K]$.

  If $\dot{q} \in V$ is a $\bb{P}$-name for an element of $\dot{\bb{Q}}$, let $r_{\dot{q}}$ be the interpretation of $j(\dot{q})$ in $V[G*H*K]$. 
  Let $\bar{H} = \{r_{\dot{q}} \mid$ for some $p \in \bb{P}$, $(p, \dot{q}) \in G*H\}$. Since $j \restriction \bb{P} * \dot{\bb{Q}} \in M$, 
  $\bar{H} \in M[G*H*K]$. Moreover, $\bar{H}$ is a directed subset of $j(\bb{Q})$,  
  $|\bar{H}| < \delta < j(\lambda)$, and $j(\bb{Q})$ is $j(\lambda)$-directed closed in $M[G*H*K]$, so there is 
  $q^* \in j(\bb{Q})$ such that $q^* \leq r$ for all $r \in \bar{H}$. 
  Let $H^+$ be $j(\bb{Q})$-generic over $V[G*H*K]$ with $q^* \in H^+$. By construction, we have $j``H \subseteq H^+$, so we may, in 
  $V[G*H*K*H^+]$, further extend $j$ to $j:V[G*H] \rightarrow M[G*H*K*H^+]$.

	In $M[G*H*K*H^+]$, $j(S) = \langle j(I) \times \kappa, \{j(R)\} \rangle$ is a $j(\nu)$-system. 
  Let $\eta = \sup(j``\nu) < j(\nu)$, and let $\xi = \min(j(I) \setminus \eta)$. For each $\gamma < \kappa$, 
  let $I_\gamma = \{\alpha \in I \mid$ for some $\beta < \kappa, (j(\alpha), \beta) <_{j(R)} (\xi, \gamma)\}$. 
  Fix $\gamma < \kappa$ and $\alpha \in I_\gamma$. Since $j(R)$ is tree-like and any two distinct elements of $j(S)_{j(\alpha)}$ 
  are $j(R)$-incomparable with one another, there is a unique $\beta < \kappa$ such that $(j(\alpha), \beta) <_{j(R)} (\xi, \gamma)$. 
  Denote this unique $\beta < \kappa$ by $\beta^\alpha_\gamma$. For $\gamma < \kappa$, let $b_\gamma = 
  \{(\alpha, \beta^\alpha_\gamma) \mid \alpha \in I_\gamma \}$.
	
  $\bar{b} := \{b_\gamma \mid \gamma < \kappa \} \in V[G*H*K*H^+]$, and we claim that $\bar{b}$ is a full set of 
  branches through $S$. We first verify clause (1) of Definition \ref{full_set_def}. To this end, fix $\gamma < \kappa$ 
  and $\alpha, \alpha' \in I_\gamma$. In $M[G*H*K*H^+]$, $(j(\alpha), \beta^\alpha_\gamma), (j(\alpha'), \beta^{\alpha'}_\gamma) 
  <_{j(R)} (\xi, \gamma)$, so, since $j(R)$ is tree-like, $(j(\alpha), \beta^\alpha_\gamma)$ and $(j(\alpha'), \beta^{\alpha'}_\gamma)$ 
  are $j(R)$-comparable. By elementarity and the fact that $j$ is the identity on $\kappa$, it is the case that $(\alpha, \beta^\alpha_\gamma)$ 
  and $(\alpha', \beta^{\alpha'}_\gamma)$ are $R$-comparable. Therefore, $b_\gamma$ is a branch of $S$. To verify clause (2) of Definition \ref{full_set_def}, 
  fix $\alpha \in I$. By the definition of system, 
  there are $\beta, \gamma < \kappa$ such that $(j(\alpha), \beta) <_{j(R)} (\xi, \gamma)$. For these values of $\beta$ and $\gamma$, 
  we then have $\alpha \in I_\gamma$ and $\beta = \beta^\alpha_\gamma$, so $(\alpha, \beta) \in b_\gamma \cap S_\alpha$.
  
  Since $\bb{P} * \dot{\bb{Q}} * \dot{\bb{R}}$ is $\mu$-closed in $V$, $\mu < \delta$, and $M$ is closed under $\delta$-sequences 
  in $V$, $M[G*H*K]$ is closed under $<\mu$-sequences in $V[G*H*K]$. Therefore, since $j(\bb{Q})$ is $j(\lambda)$-directed closed 
  in $M[G*H*K]$ and $j(\lambda) > \delta$, it is $\mu$-directed closed in $V[G*H*K]$. Thus, $\bb{R} * j(\dot{\bb{Q}})$ is 
  $\mu$-closed in $V[G*H]$, so, since $\mathrm{width}(S) = \kappa < \mu$, Lemma \ref{preservationtheorem} implies that $S$ has a cofinal branch in $V[G*H]$. 
\end{proof}

We can use this to get a global result.

\begin{theorem}
	Suppose there is a proper class of supercompact cardinals. Then there is a class forcing extension in which every narrow system has a cofinal branch.
\end{theorem}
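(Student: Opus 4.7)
The plan is to iterate Lévy collapses along a reverse Easton class iteration $\bb{P}$ that collapses consecutive supercompacts of $V$ against each other, so that in the extension every regular cardinal has, just above it, a cardinal that was a supercompact of $V$; Theorem \ref{indestthm} and its indestructibility clause then apply stage by stage.

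Enumerate the supercompacts of $V$ in increasing order as $\langle \lambda_i \mid i \in \mathrm{On} \rangle$, and set $\lambda_{-1} := \omega$. Let $\bb{P}$ be the reverse Easton class iteration, with Easton support, whose only nontrivial factors occur at the cardinals $\lambda_i$ for $i \geq -1$, at which we force with $\mathrm{Coll}(\lambda_i, <\lambda_{i+1})$. Let $G$ be $\bb{P}$-generic over $V$. Standard reverse-Easton arguments show that in $V[G]$ each $\lambda_i$ is a regular cardinal with $\lambda_{i+1} = \lambda_i^+$, that the supercompactness of $\lambda_{i+1}$ survives to the intermediate model $V^{\bb{P}_{\lambda_i}}$ since $|\bb{P}_{\lambda_i}| \leq \lambda_i < \lambda_{i+1}$ (L\'evy--Solovay), and that every cardinal of $V[G]$ above $\omega$ is either some $\lambda_i$ or a singular limit thereof.

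To verify the conclusion, fix in $V[G]$ an arbitrary regular uncountable cardinal $\nu$ and a narrow system $S$ of height $\nu$ with $\mathrm{width}(S) = \theta$, so $\theta^+ < \nu$. I choose $i$ so that $\theta < \lambda_i$ and $\lambda_{i+1} \leq \nu$: if $\nu = \lambda_{i+1}$ is a successor this $i$ works, since narrowness forces $\theta^+ \leq \lambda_i$; if $\nu = \lambda_j$ for $j$ limit, pick any $i < j$ with $\theta < \lambda_i$ (possible since $\lambda_j = \sup_{k<j} \lambda_k$). Factor $\bb{P} \cong \bb{P}_{\lambda_i} * \dot{\bb{Q}}_i * \dot{\bb{P}}_{\mathrm{tail}}$, where $\dot{\bb{Q}}_i$ names $\mathrm{Coll}(\lambda_i, <\lambda_{i+1})$. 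By Theorem \ref{indestthm} applied in $V^{\bb{P}_{\lambda_i}}$ with $\mu = \lambda_i$ and $\lambda = \lambda_{i+1}$, after forcing with $\bb{Q}_i$ the principle $NSP(\lambda_i, \geq \lambda_{i+1})$ holds and is indestructible under $\lambda_{i+1}$-directed closed set forcing. Each nontrivial factor of $\bb{P}_{\mathrm{tail}}$ is $\mathrm{Coll}(\lambda_j, <\lambda_{j+1})$ for some $j \geq i+1$ and is therefore $\lambda_{i+1}$-directed closed; Easton support preserves this degree of directed closure, so $\bb{P}_{\mathrm{tail}}$ is $\lambda_{i+1}$-directed closed. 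Hence $NSP(\lambda_i, \geq \lambda_{i+1})$ still holds in $V[G]$, which supplies the desired cofinal branch for $S$.

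The principal obstacle I expect is the careful verification of the three preservation claims underlying the argument: that each $\lambda_{i+1}$ is preserved as a supercompact through $\bb{P}_{\lambda_i}$; that the cardinal structure of $V[G]$ is as described (so that the case split on $\nu$ is genuinely exhaustive); and that $\bb{P}_{\mathrm{tail}}$ is truly $\lambda_{i+1}$-directed closed through every limit stage, including those where $\lambda_{i+1}$ is itself a limit of supercompacts. These are standard features of reverse Easton class iteration over a proper class of large cardinals, but each requires careful bookkeeping in the class-forcing setup.
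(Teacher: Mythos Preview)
Your approach is essentially the same as the paper's, but there is a genuine gap in your handling of limit stages. You claim that the increasing enumeration of the supercompacts satisfies $\lambda_j = \sup_{k<j}\lambda_k$ for limit $j$, and that consequently every cardinal of $V[G]$ above $\omega$ is some $\lambda_i$ or a singular limit of such. Neither claim is correct in general: the class of supercompact cardinals need not be closed, so for limit $j$ the supremum $\mu_j := \sup_{k<j}\lambda_k$ may lie strictly below $\lambda_j$. When that happens, every $V$-cardinal in the interval $(\mu_j, \lambda_j)$ survives as a cardinal of $V[G]$, since none of your collapses touch this interval. Your case split on $\nu$ is therefore not exhaustive. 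Worse, the model can genuinely fail to have the narrow system property at such gap cardinals: take $\nu = \mu_j^{++}$ and a narrow $\nu$-system of width $\theta = \mu_j$ (so $\theta^+ < \nu$). There is no $i$ with $\theta < \lambda_i$ and $\lambda_{i+1} \leq \nu$, so your argument does not apply, and nothing in your iteration forces $NSP$ at $\nu$ --- the part of the forcing below $\mu_j$ is too small and the part at and above $\lambda_j$ is too closed to affect such systems.

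The paper addresses exactly this point by working not with the raw enumeration of supercompacts but with a \emph{continuous} sequence $\langle \kappa_i \mid i \in \mathrm{On}\rangle$ that interleaves the supercompacts with the successors of their limit points: for limit $i$ one sets $\kappa_{i+1} = \kappa_i^+$ and takes $\kappa_{i+2}$ to be the next supercompact, inserting the collapse $\mathrm{Coll}(\kappa_i^+, <\kappa_{i+2})$ at that stage so that no gap remains. The paper also uses full (inverse) support rather than Easton support, which makes the $\kappa_{i+2}$-directed closure of the tail forcing immediate, and it explicitly truncates to a set-sized stage $\bb{P}_{k+1}$ before invoking the indestructibility clause of Theorem~\ref{indestthm}, which as stated applies only to set forcing.
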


\begin{proof}
	Let $\langle \kappa_i \mid i \in \mathrm{On} \rangle$ be an increasing, continuous sequence of cardinals such that:
	\begin{itemize}
		\item{$\kappa_0 = \omega$;}
		\item{if $i = 0$ or $i$ is a successor ordinal, then $\kappa_{i+1}$ is supercompact;}
		\item{if $i$ is a limit ordinal, then $\kappa_{i+1} = \kappa_i^+$.}
	\end{itemize}
	We may assume that, if $i$ is a limit ordinal, then $\kappa_i$ is singular, for, if this is not the case, then we may let $i$ be least such that $\kappa_i$ is regular and work in $V_{\kappa_i}$ instead of $V$.

  Informally, we force with a class-length iteration of L\'{e}vy collapses to turn each $\kappa_i$ into $\aleph_i$. More precisely, we recursively define posets $\langle \bb{P}_i \mid i \in \mathrm{On} \rangle$ as follows.
	\begin{itemize}
		\item{$\bb{P}_0$ is trivial forcing.}
    \item{If $i = 0$ or $i$ is a successor ordinal, then $\bb{P}_{i+1} = \bb{P}_i * \dot{\mathrm{Coll}}(\kappa_i, < \kappa_{i+1})$.}
		\item{If $i$ is a limit ordinal, then $\bb{P}_i$ is the inverse (i.e., full-support) limit of 
      $\langle \bb{P}_j \mid j < i \rangle$ and $\bb{P}_{i+1} = \bb{P}_i * \{\dot{\bbm{1}}\}$, where $\dot{\{\bbm{1}\}}$ is a $\bb{P}_i$-name for trivial forcing.}
	\end{itemize}
	For ordinals $i < k$, let $\dot{\bb{P}}_{i,k}$ be a $\bb{P}_i$-name such that $\bb{P}_k \cong \bb{P}_i * \dot{\bb{P}}_{i,k}$ and note that, in $V^{\bb{P}_i}$, $\bb{P}_{i,k}$ is $\kappa_i$-directed closed. Thus, $(H(\kappa_i))^{V^{\bb{P}_i}} = (H(\kappa_i))^{V^{\bb{P}_k}}$, so $V^{\bb{P}} = \bigcup_{i \in \mathrm{On}} V^{\bb{P}_i}$ is a model of ZFC. Also, standard arguments show that, in $V^\bb{P}$, for all $i \in \mathrm{On}$, $\kappa_i = \aleph_i$, i.e., $\{\kappa_i \mid i \in \mathrm{On}\}$ are precisely the infinite cardinals of $V^{\bb{P}}$. Moreover, for all $i \in \mathrm{On}$, $\kappa_i$ is regular in $V^{\bb{P}}$ iff $i$ is $0$ or a successor ordinal.

  We now show that, in $V^{\bb{P}}$, every narrow system has a cofinal branch. Since the infinite cardinals of $V^{\bb{P}}$ 
  are precisely $\{\kappa_i \mid i \in \mathrm{On}\}$ and, for an ordinal $i$, $\kappa_i$ is regular iff $i=0$ or $i$ is a successor ordinal, 
  it suffices to show that, for all ordinals $i, k$ with $i+1 < k$ and $k$ a successor ordinal, every system with width 
  $\kappa_i$ and height $\kappa_k$ has a cofinal branch. Since, in $V^{\bb{P}_{k+1}}$, $\bb{P}_{k+1, \ell}$ is $\kappa_{k+1}$-closed 
  for all $\ell \geq k+1$, all such systems are in $V^{\bb{P}_{k+1}}$. We may as usual just deal with systems with a single relation.

  In $V^{\bb{P}_{i+1}}$, $\kappa_{i+1} = \kappa_i^+$ and, since $|\bb{P}_{i+1}| < \kappa_{i+2}$, a result of L\'{e}vy and Solovay 
  (see \cite{levysolovay}) yields that $\kappa_{i+2}$ is still supercompact. $\bb{P}_{i+1, i+2} = \mathrm{Coll}(\kappa_{i+1}, < \kappa_{i+2})$, 
  so, by Theorem \ref{indestthm}, $NSP(\kappa_{i+1}, \geq \kappa_{i+2})$ holds in $V^{\bb{P}_{i+2}}$ and is indestructible under $\kappa_{i+2}$-directed 
  closed set forcing. Since $\bb{P}_{i+2, k+1}$ is $\kappa_{i+2}$-directed closed, $NSP(\kappa_{i+1}, \geq \kappa_{i+2})$ holds in $V^{\bb{P}_{k+1}}$. 
  Since $\kappa_k \geq \kappa_{i+2}$, we have that, in $V^{\bb{P}_{k+1}}$, every system with width $\kappa_i$ and height $\kappa_k$ has a cofinal branch. 
\end{proof}

\begin{theorem} \label{squarenspthm}
  Suppose there are infinitely many supercompact cardinals. Then there is a forcing extension 
  in which $\square_{\aleph_\omega, < \aleph_\omega}$ and $NSP(\aleph_{\omega+1})$ both hold.
\end{theorem}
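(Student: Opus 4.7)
The strategy is: fix $\omega$-many supercompact cardinals $\langle \kappa_n \mid n < \omega \rangle$, Laver-prepared to be indestructibly supercompact, let $\kappa = \sup_n \kappa_n$, and force with an iteration that collapses each $\kappa_n$ down to $\aleph_{n+1}$ while simultaneously adding a $\square_{\kappa, <\kappa}$-sequence, all arranged so that the appropriate tails of the forcing have the directed closure needed to invoke the indestructibility of $NSP$ afforded by Theorem~\ref{indestthm}.

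I would define the forcing $\bb{P}$ to be (essentially) the iteration whose stage-$n$ piece is $\mathrm{Coll}(\kappa_{n-1}, <\kappa_n)$ (with the convention $\kappa_{-1} = \omega$), together with $\bb{S}(\kappa, <\kappa)$ to add the $\square$-sequence. Since the square-adding forcing is only $\omega_1$-closed, one would naively worry about preserving the supercompactness of the $\kappa_n$, so the key structural trick is to absorb $\bb{S}(\kappa, <\kappa)$ into the Levy collapse via Fact~\ref{lift}, possibly after threading initial segments via the $\bb{T}_\kappa$-type forcings so that, by Proposition~\ref{denseclosed} and Remark~\ref{denseclosedremark}, the combined forcing has dense subsets of high directed closure. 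The goal of this bookkeeping is: for each $n$, factor $\bb{P}$ as $\bb{P}_n * \dot{\bb{R}}_n$ so that, in $V^{\bb{P}_n}$, $\kappa_{n+1}$ is still supercompact and $\dot{\bb{R}}_n$ is forced to have a $\kappa_{n+1}$-directed closed dense subset.

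Granting such a factorisation, the verification is clean. In $V^{\bb{P}_n}$, the next step of the iteration is essentially $\mathrm{Coll}(\kappa_n, <\kappa_{n+1})$, so Theorem~\ref{indestthm} yields $NSP(\kappa_n, \geq \kappa_{n+1})$ indestructibly under $\kappa_{n+1}$-directed closed forcing. The remaining tail is of that form, so $NSP(\kappa_n, \geq \kappa_{n+1})$ persists to $V^{\bb{P}}$, where it reads $NSP(\aleph_{n+1}, \geq \aleph_{n+2})$. Given a narrow system of height $\aleph_{\omega+1}$ in $V^{\bb{P}}$, its width $\theta$ satisfies $\theta^+ < \aleph_{\omega+1}$, hence $\theta \leq \aleph_n$ for some $n$; the system therefore has width $<\aleph_{n+1}$ and height $\aleph_{\omega+1} \geq \aleph_{n+2}$, and the instance $NSP(\aleph_{n+1}, \geq \aleph_{n+2})$ delivers a cofinal branch. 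Meanwhile, $\square_{\aleph_\omega, <\aleph_\omega}$ in $V^{\bb{P}}$ holds essentially automatically: the generic $\square_{\kappa, <\kappa}$-sequence continues to satisfy the definitional clauses (which are absolute between the intermediate models), and $\kappa$ becomes $\aleph_\omega$ with $\kappa^+$ preserved as $\aleph_{\omega+1}$.

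The main obstacle is organising $\bb{P}$ so that, at every stage, the supercompactness of the relevant $\kappa_n$ is intact (despite having partially added the square) and the tail forcing retains $\kappa_n$-directed closure. This is exactly the interplay that Fact~\ref{lift} and the absorption lemmas surrounding $\bb{S}(\kappa, <\kappa)$ and its associated thread forcings are tailored to handle; the bookkeeping to get the pieces lined up simultaneously for every $n < \omega$, rather than for a single $n$, is the most delicate part of the argument.
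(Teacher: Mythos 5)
Your high-level plan---collapse the $\kappa_n$, add the square via $\bb{S}(\mu, <\mu)$ where $\mu = \sup_n \kappa_n$, and arrange that each tail $\dot{\bb{R}}_n$ has a dense $\kappa_{n+1}$-directed closed subset so that Theorem \ref{indestthm} plus indestructibility finishes---founders on a concrete obstruction: the factorisation you defer to the ``bookkeeping'' cannot be arranged. Wherever $\bb{S}(\mu, <\mu)$ sits in the iteration, you face a dilemma. If $\bb{S}$ lies in the tail $\dot{\bb{R}}_n$, the only mechanism available for directed closure is Proposition \ref{denseclosed}, which applies to $\bb{S}(\mu, <\lambda) * (\dot{\bb{T}}_{\kappa}(\dot{\vec{\mathcal{C}}}))^\theta$, i.e.\ only after composing with the thread forcing---and the thread forcing cannot be part of the forcing that produces the final model. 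Its conditions have order type $<\kappa^*$, yet the generic thread is unbounded in $\mu^+$ (any condition end-extends by a single point past any $\beta < \mu^+$ without creating new limit points), so every proper initial segment of the generic has order type $<\kappa^*$ and hence forcing with $\bb{T}$ changes $\cf(\mu^+)$ to $\kappa^* < \mu$, destroying $\aleph_{\omega+1}$ itself (and trivialising the height of the systems you care about). If instead $\bb{S}$ is placed before the stage-$n$ collapse, you need $\kappa_{n+1}$ to remain supercompact after a $\mu^+$-sized forcing that is merely $\omega_1$-closed and strategically closed; Laver indestructibility is useless here because $\bb{S}$ is not $\kappa_{n+1}$-directed closed, and Fact \ref{lift} likewise demands closure that $\bb{S}$ alone lacks. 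So the step you flag as ``the most delicate part'' is not delicate bookkeeping---it is exactly the impossible part.

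The missing idea is the branch-preservation machinery, which is what allows the thread forcing to be used only as an \emph{auxiliary outer} forcing rather than a constituent of the model. The paper's model is simply $V^{\bb{P} * \dot{\bb{S}}}$, with $\bb{P}$ the full-support collapse iteration and $\bb{S} = \bb{S}(\mu, <\mu)$ forced last; no attempt is made to preserve any supercompactness through $\bb{S}$. Instead, given a putative branchless narrow system of width $\kappa$ in $V^{\bb{P}*\dot{\bb{S}}}$, one fixes $n^*$ with $\kappa \leq \kappa_{n^*}$, sets $\kappa^* = \kappa_{n^*+3}$, and forces \emph{further} with $\bb{T} = \bb{T}_{\kappa^*}(\vec{\mathcal{C}})$: now Proposition \ref{denseclosed} makes $\bb{P}^{n^*+4} * \dot{\bb{S}} * \dot{\bb{T}}$ $\kappa^*$-directed closed on a dense set, Fact \ref{lift} absorbs it into $j(\mathrm{Coll}(\kappa_{n^*+2}, <\kappa^*))$, and a master-condition lifting of a $\delta$-supercompactness embedding (supercompactness being available in the small intermediate extension $V[G_{n^*+3}]$ by L\'{e}vy--Solovay) yields a full set of branches through the system in an outer model. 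The branch is then pulled back into $V^{\bb{P}*\dot{\bb{S}}}$ by Neeman's Lemma \ref{neemanlemma}, whose hypothesis---$\kappa_{n^*+2}$-distributivity of the $\kappa_{n^*+1}$-fold product of the composed auxiliary forcing---is supplied by Corollary \ref{distributive_cor}; this is where the genericity of the square sequence is genuinely used. Your proposal contains no substitute for this pull-back step, and without it the auxiliary forcings that restore closure inevitably contaminate the final model. (Your final deductions---that the instances $NSP(\aleph_{n+1}, \geq \aleph_{n+2})$ suffice because a narrow system of height $\aleph_{\omega+1}$ has width $\leq \aleph_n$ for some $n$, and that the generic square sequence survives cardinal-preserving steps---are fine, but they sit atop the unobtainable factorisation.)
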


\begin{proof}
  Let $\langle \kappa_n \mid n < \omega \rangle$ be an increasing sequence of supercompact cardinals, let $\mu = \sup(\{\kappa_n \mid n < \omega\})$, and let $\lambda = \mu^+$. Define an iteration $\langle \bb{P}_n, \bb{\dot{Q}}_n \mid n < \omega \rangle$ by letting $\bb{P}_0$ be trivial forcing, 
  $\bb{Q}_0 = \mathrm{Coll}(\omega, < \kappa_0)$, and, for all $n < \omega$, letting $\bb{\dot{Q}}_{n+1}$ be a $\bb{P}_{n+1}$-name for $\mathrm{Coll}(\kappa_n, < \kappa_{n+1})$. Let $\mathbb{P}$ be the inverse limit of the iteration. Thus, in $V^{\bb{P}}$, $\kappa_n = \aleph_{n+1}$ for all $n < \omega$, $\mu = \aleph_\omega$, and $\lambda = \aleph_{\omega + 1}$. For all $n < \omega$, let $\bb{\dot{P}}^n$ be a $\bb{P}_n$-name such that $\bb{P} \cong \bb{P}_n * \bb{\dot{P}}^n$ and, for all $m < n < \omega$, let $\dot{\bb{P}}_{mn}$ be a $\bb{P}_m$-name such that $\bb{P}_n \cong \bb{P}_m * \dot{\bb{P}}_{mn}$.

  In $V^{\bb{P}}$, let $\bb{S} = \bb{S}(\mu, < \mu)$, the forcing introduced in Section \ref{forcingSect} 
  to add a $\square_{\mu, < \mu}$-sequence. We claim that $V^{\bb{P}*\dot{\bb{S}}}$ is the desired model. 
  To this end, let $G$ be $\bb{P}$-generic over $V$, and let $H$ be $\bb{S}$-generic over $V[G]$. For 
  $n < \omega$, let $G_n$ and $G^n$ be the generic filters induced by $G$ on $\bb{P}_n$ and $\bb{P}^n$, 
  respectively. For $m < n < \omega$, let $G_{mn}$ be the generic filter induced by $G$ on $\bb{P}_{mn}$. 
  Suppose for sake of contradiction that, in $V[G*H]$, there is a narrow $\lambda$-system with no cofinal branch. 
  As usual, we may assume that there is such a system of the form 
  $S = \langle \lambda \times \kappa, \{R\} \rangle$ for some $\kappa < \mu$.

  Let $\vec{\mathcal{C}} = \bigcup H$. $\vec{\mathcal{C}} = \langle \mathcal{C}_\alpha \mid \alpha < \lambda \rangle$ is thus a 
  $\square_{\mu, <\mu}$-sequence in $V[G*H]$. Let $n^* < \omega$ be least such that $\kappa \leq \kappa_{n^*}$, let 
  $\kappa^* = \kappa_{n^* + 3}$, and let $\bb{T} = \bb{T}_{\kappa^*}(\vec{\mathcal{C}})$. Since $|\bb{P}_{n^*+3}| < \kappa^*$, 
  the aforementioned result of L\'{e}vy and Solovay from \cite{levysolovay} implies that
  $\kappa^*$ remains supercompact in $V[G_{n^*+3}]$. Fix a cardinal $\delta$ such that $\bb{P}^{n^*+3} * \dot{\bb{S}} * \dot{\bb{T}} 
  \in (V_\delta)^{V[G_{n^*+3}]}$, and let $j:V[G_{n^*+3}] \rightarrow M[G_{n^*+3}]$ witness that $\kappa^*$ is 
  $\delta$-supercompact, i.e., $\mathrm{crit}(j) = \kappa^*$, $j(\kappa^*) > \delta$, and ${^\delta}M[G_{n^*+3}] 
  \subseteq M[G_{n^*+3}]$. Note that $j(\bb{Q}_{n^*+3}) = j(\mathrm{Coll}(\kappa_{n^*+2}, < \kappa^*)) = 
  \mathrm{Coll}(\kappa_{n^*+2}, < j(\kappa^*))$. In $V[G_{n^*+4}]$, by Proposition \ref{denseclosed}, $\bb{P}^{n^*+4} * \dot{\bb{S}} * \dot{\bb{T}}$ 
  has a dense $\kappa^*$-directed closed subset and is of size less than $j(\kappa^*)$, so, by Fact \ref{lift}, in 
  $V[G_{n^*+3}]$, the identity embedding from $\bb{Q}_{n^*+3}$ into $j(\bb{Q}_{n^*+3})$ can be extended to a complete 
  embedding of $\bb{P}^{n^*+3} * \dot{\bb{S}} * \dot{\bb{T}}$ into $j(\bb{Q}_{n^*+3})$ in such a way that the quotient 
  forcing is $\kappa_{n^*+2}$-closed. Let $\dot{\bb{R}}$ be a $\bb{P}^{n^*+3} * \dot{\bb{S}} * \dot{\bb{T}}$-name 
  for this quotient forcing. Then $j(\bb{Q}_{n^*+3}) \cong \bb{P}^{n^*+3} * \dot{\bb{S}} * \dot{\bb{T}} * \dot{\bb{R}}$. Let $I$ be 
  $\bb{T}$-generic over $V[G*H]$ and $J$ be $\bb{R}$-generic over $V[G*H*I]$. Since $j$ is the identity on $\bb{Q}_{n^*+3}$, 
  we have $j``G_{n^*+3, n^*+4} \subseteq G^{n^*+3} * H * I * J$, so we may extend $j$ in $V[G*H*I*J]$ to a map 
  $j:V[G_{n^*+4}] \rightarrow M[G*H*I*J]$.

  We would like to extend $j$ further to have domain $V[G * H]$. To do this, we will construct a master condition in 
  $j(\bb{P}^{n^* + 4} * \dot{\bb{S}})$. In $V[G_{n^*+4}]$, let $\bb{U}$ be the dense $\kappa^*$-directed 
  closed subset of $\bb{P}^{n^*+4} * \dot{\bb{S}} * \dot{\bb{T}}$, and let $\dot{\bb{U}} \in V[G_{n^*+3}]$ be a 
  $\bb{Q}_{n^*+3}$-name for $\bb{U}$. If $\dot{s} \in V[G_{n^*+3}]$ is a $\bb{Q}_{n^*+3}$-name 
  for an element of $\dot{\bb{P}}^{n^*+4} * \dot{\bb{S}} * \dot{\bb{T}}$, let $r_{\dot{s}}$ be the interpretation 
  of $j(\dot{s})$ in $V[G*H*I*J]$. Let $\bar{H} = \{r_{\dot{s}} \mid$ for some $q \in \bb{Q}_{n^*+3}$, 
  $(q, \dot{s}) \in G^{n^*+3} * H * I$ and $q \Vdash ``\dot{s} \in \dot{\bb{U}}"\}$. Since 
  $j \restriction \bb{P}^{n^*+3} * \dot{\bb{S}} * \dot{\bb{T}} \in M[G_{n^*+3}]$, 
  we have $\bar{H} \in M[G*H*I*J]$. Moreover, $\bar{H}$ is a directed subset of $j(\bb{U})$, 
  $|\bar{H}| < \delta < j(\kappa^*)$, and $j(\bb{U})$ is $j(\kappa^*)$-directed closed in $M[G*H*I*J]$. 
  Therefore, we can find $r^* \in j(\bb{U})$ such that $r^* \leq r$ for all $r \in \bar{H}$. 
  Let $r^* = (p^*, \dot{s}^*, \dot{t}^*)$, where $(p^*, \dot{s}^*) \in j(\bb{P}^{n^*+4} * \dot{\bb{S}})$. 
  Let $G^+ * H^+$ be $j(\bb{P}^{n^*+4} * \dot{\bb{S}})$-generic over $V[G*H*I*J]$ with $(p^*, \dot{s}^*) 
  \in G^+ * H^+$. By construction, $j``G^{n^*+4}*H \subseteq G^+ * H^+$, so, in $V[G*H*I*J*G^+*H^+]$, 
  we can extend $j$ to $j:V[G*H] \rightarrow M[G*H*I*J*G^+*H^+]$.
  
  In $M[G*H*I*J*G^+*H^+]$, $j(S) = \langle j(\lambda) \times \kappa, \{j(R)\} \rangle$ is a 
  $j(\lambda)$-system. Recall that $\eta = \sup(j``\lambda)$. For each $\gamma < \kappa$, let 
  $b_\gamma = \{u \in S \mid j(u) <_{j(R)} (\eta,\gamma)\}$. It is easily verified, exactly 
  as was done in the proof of Theorem \ref{indestthm}, that 
  $\{b_\gamma \mid \gamma < \kappa \}$ is a full set of branches through $S$.

  Since ${^\delta}M[G_{n^*+3}] \subseteq M[G_{n^*+3}]$ in $V[G_{n^*+3}]$, $\kappa_{n^*+2} < \delta$, and $\bb{P}^{n^*+3} * 
  \dot{\bb{S}} * \dot{\bb{T}}$ has a dense $\kappa_{n^*+2}$-directed closed subset in $V[G_{n^*+3}]$, we have 
  ${^{<\kappa_{n^*+2}}}M[G*H*I] \subseteq M[G*H*I]$ in $V[G*H*I]$. Therefore, since $\bb{R} * j(\bb{P}^{n^*+4} 
  * \dot{\bb{S}} * \dot{\bb{T}})$ has a dense $\kappa_{n^*+2}$-closed subset in $M[G*H*I]$, it has a 
  dense $\kappa_{n^*+2}$-closed subset in $V[G*H*I]$ as well. Combined with Clause (1) of Corollary \ref{distributive_cor}, this 
  implies that $(\bb{T} * \dot{\bb{R}} * j(\bb{P}^{n^*+4} * \dot{\bb{S}} * \dot{\bb{T}}))^{\kappa_{n^*+1}}$ is 
  $\kappa_{n^*+2}$-distributive in $V[G*H]$, so, \emph{a fortiori}, $(\bb{T} * \dot{\bb{R}} * j(\bb{P}^{n^*+4} * \dot{\bb{S}}))^{\kappa_{n^*+1}}$ 
  is $\kappa_{n^*+2}$-distributive in $V[G*H]$. Therefore, since $\{b_\gamma \mid \gamma < \kappa \} \in V[G*H*I*J*G^+*H^+]$ 
  is a full set of branches through $S$, Lemma \ref{neemanlemma} implies that there is a cofinal branch through $S$ in $V[G*H]$.
\end{proof}

\begin{remark}
  If a tree admits a narrow system that has a cofinal branch, then the downward closure of this cofinal branch in the tree is  
  a cofinal branch through the tree. Since, by the results mentioned in Remark \ref{square_remark}, $\square_{\aleph_\omega, < \aleph_\omega}$ implies the existence of a 
  special $\aleph_{\omega + 1}$-Aronszajn tree, the model from Theorem \ref{squarenspthm} provides an example 
  of a model in which there are $\aleph_{\omega+1}$-trees that do not admit narrow systems (in particular, 
  any $\aleph_{\omega+1}$-Aronszajn tree in that model cannot admit a narrow system).
\end{remark}

We can use a similar argument to show that, unlike the tree property, the narrow system property is not equivalent to weak compactness for inaccessible cardinals.

\begin{theorem}
	Suppose $\lambda$ is a supercompact cardinal. There is a forcing extension in which $\lambda$ remains inaccessible, $NSP(\lambda, \geq \lambda)$ holds, and $\lambda$ is not weakly compact.
\end{theorem}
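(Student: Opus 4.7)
The plan is to force with $\bb{P} = \bb{Q}(\lambda, <\lambda)$ to add a $\square(\lambda, <\lambda)$-sequence $\vec{\mathcal{C}}$. Let $G$ be $\bb{P}$-generic over $V$. Since $\bb{P}$ is $\omega_1$-closed and $\lambda$-strategically closed by Proposition \ref{dist_prop_2}, and has size $\lambda$ (hence is $\lambda^+$-cc), it preserves cardinals, cofinalities, and the power set function below $\lambda$, so $\lambda$ remains inaccessible in $V[G]$. On the other hand, $\vec{\mathcal{C}}$ naturally gives rise to a $\lambda$-tree of width $<\lambda$ (nodes are clubs $C \in \mathcal{C}_\alpha$, ordered by end-extension), any cofinal branch through which would be a thread through $\vec{\mathcal{C}}$. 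Hence if $\lambda$ were weakly compact in $V[G]$, the tree property at $\lambda$ would furnish such a branch, contradicting clause (3) of the definition of a $\square(\lambda, <\lambda)$-sequence. Thus $\lambda$ is not weakly compact in $V[G]$.

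The main task is to establish $NSP(\lambda, \geq \lambda)$ in $V[G]$. Suppose for contradiction that $\mu \geq \lambda$ is a regular cardinal and $S = \langle J \times \kappa, \{R\}\rangle \in V[G]$ is a narrow $\mu$-system (so $\kappa^+ < \lambda$) with no cofinal branch. By Proposition \ref{dist_prop_2}(3), the two-step iteration $\bb{P} * \dot{\bb{T}}(\dot{\vec{\mathcal{C}}})$ has a dense $\lambda$-directed closed subset $\bb{U}$, definable in $V$ and of size $\lambda$. Let $H$ be $\bb{T}(\vec{\mathcal{C}})$-generic over $V[G]$, and let $G_\bb{U}$ be the $\bb{U}$-generic canonically derived from $G*H$.

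Now invoke the supercompactness of $\lambda$ in $V$. Fix $\delta > \mu$ large enough that $\bb{U}$ and a canonical $\bb{P}$-name for $S$ lie in $V_\delta$, and let $j:V\to M$ witness $\delta$-supercompactness, with $\mathrm{crit}(j)=\lambda$, $j(\lambda)>\delta$, and ${}^\delta M \subseteq M$. Then $j``G_\bb{U}$ is a directed subset of $j(\bb{U})$ of size at most $\lambda<j(\lambda)$ and, by the closure of $M$, lies in $M[G*H]$; since $j(\bb{U})$ is $j(\lambda)$-directed closed in $M$, it admits a lower bound $u^*$. Let $G^+$ be $j(\bb{U})$-generic over $V[G*H]$ with $u^*\in G^+$; then $j``G_\bb{U}\subseteq G^+$ and, in $V[G*H*G^+]$, $j$ lifts to $j:V[G]\to M[G^+]$.

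In $M[G^+]$, $j(S)$ is a $j(\mu)$-system. Set $\eta=\sup(j``\mu)$, $\xi=\min(j(J)\setminus\eta)$, and, for each $\gamma<\kappa$, put $b_\gamma = \{u\in S \mid j(u)<_{j(R)}(\xi,\gamma)\}$. Exactly as in the proofs of Theorems \ref{indestthm} and \ref{squarenspthm}, tree-likeness of $j(R)$ together with the fact that $j$ fixes $\kappa$ gives that $\{b_\gamma \mid \gamma<\kappa\}$ is a full set of branches through $S$ in $V[G*H*G^+]$. The extension $V[G]\subseteq V[G*H*G^+]$ is realized by forcing with the quotient of $j(\bb{U})$, which descends to a $\lambda$-directed closed (in particular $\kappa^+$-closed) forcing over $V[G]$, so Lemma \ref{preservationtheorem} produces a cofinal branch through $S$ in $V[G]$, contradicting our assumption. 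The main obstacle is the construction of the master condition $u^*$ and verifying that the intermediate forcing carries the closure needed for Lemma \ref{preservationtheorem}; the argument is a direct adaptation of the proof of Theorem \ref{squarenspthm}, simplified by the fact that $\lambda$ itself is the target and no Lévy collapse is required.
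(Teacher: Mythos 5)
Your frame (force with $\bb{Q}(\lambda, <\lambda)$; inaccessibility via strategic closure and size; non-weak-compactness via the tree of clubs coded by $\vec{\mathcal{C}}$; derive a full set of branches from an embedding with critical point $\lambda$) matches the paper's, but the central step of your NSP argument --- the master condition --- is not just unjustified but false. You take $j:V \rightarrow M$ and seek a lower bound $u^*$ for $j``G_{\bb{U}}$ in $j(\bb{U})$, citing the $j(\lambda)$-directed closure of $j(\bb{U})$. That closure holds \emph{in $M$} and applies only to directed sets that are \emph{elements of $M$}; but $j``G_{\bb{U}} \notin M$, since from it and $j \restriction \bb{U} \in M$ one computes $G_{\bb{U}}$, which is generic over $M$ ($\bb{U} \in M$, and density of subsets of $\bb{U}$ is absolute, so $G_{\bb{U}}$ meets every dense set of $M$ and cannot lie in $M$). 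Worse, no lower bound exists in $j(\bb{U})$ at all, in any outer model: every condition of $j(\bb{U})$ is an element of $M$, while the $\bb{Q}(\lambda,<\lambda)$-coordinates of conditions in $G_{\bb{U}}$ have $\gamma^q$ unbounded in $\lambda$, so any $u^{**} = (q^{**}, \dot{t}^{**})$ below all of $j``G_{\bb{U}} = G_{\bb{U}}$ would satisfy $q^{**} \restriction \lambda = \vec{\mathcal{C}}$ and hence put the generic square sequence into $M$ --- a contradiction. This is precisely why the paper does not attempt to lift $j:V \rightarrow M$. Instead it first forces with the Laver preparation, making the supercompactness of $\lambda$ indestructible under $\lambda$-directed closed forcing; since $\bb{Q} * \dot{\bb{T}}^\theta$ has a dense $\lambda$-directed closed subset (Proposition \ref{dist_prop_2}(3)), $\lambda$ is again supercompact \emph{in} $V[G*H]$, and one takes $j$ with domain $V[G*H]$ outright --- no lifting and no master condition are needed, and the full set of branches lands directly in $V[G*H]$. (The other known repair is to absorb $\bb{Q} * \dot{\bb{T}}$ into $j$ of a L\'{e}vy collapse, as in Theorem \ref{squarenspthm}; with your bare $j:V \rightarrow M$ neither route is available.)

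Your final pull-back step is also wrong as stated: you invoke Lemma \ref{preservationtheorem} on the grounds that the passage from $V[G]$ to $V[G*H*G^+]$ is by a ``$\lambda$-directed closed (in particular $\kappa^+$-closed)'' forcing. But that extension factors through $\bb{T}(\vec{\mathcal{C}})$, which in $V[G]$ is merely $\lambda$-distributive, not $\kappa^+$-closed --- the dense closed subset exists only for the two-step iteration $\bb{Q} * \dot{\bb{T}}$ as seen from $V$, not for $\bb{T}$ over $V[G]$ --- and composing a distributive forcing with a closed one does not yield a closed forcing, so Lemma \ref{preservationtheorem} does not apply. The paper instead applies Neeman's Lemma \ref{neemanlemma}: by Proposition \ref{dist_prop_2}(3), $\bb{T}^{\kappa^+}$ is $\lambda$-distributive (hence $\kappa^{++}$-distributive) in $V[G]$, and since the full set of branches already lies in $V[G*H]$ (thanks to indestructibility), Lemma \ref{neemanlemma} yields a cofinal branch in $V[G]$. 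Your computation of the branches $b_\gamma$ from a suitable $j$ is fine and matches the paper's (as in Theorem \ref{indestthm}), but both the existence of the embedding and the transfer of the branch back to $V[G]$ require the indestructibility/distributivity machinery you omitted.
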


\begin{proof}
	By forcing with the Laver preparation (see \cite{laver}), we may assume that the supercompactness of 
  $\lambda$ is indestructible under $\lambda$-directed closed forcing. Let $\bb{Q} = \bb{Q}(\lambda, < \lambda)$, 
  the standard forcing introduced in Section \ref{forcingSect} to add a $\square(\lambda, < \lambda)$-sequence. 
  Let $G$ be $\bb{Q}$-generic over $V$. In $V[G]$, let $\vec{\mathcal{C}} = \bigcup G$, 
  and let $\bb{T} = \bb{T}(\vec{\mathcal{C}})$. In $V$, 
  by Proposition \ref{dist_prop_2}, $\bb{Q} * \dot{\bb{T}}^\theta$ has a dense $\lambda$-directed closed subset 
  for all $0 < \theta < \lambda$. Let $H$ be $\bb{T}$-generic over $V[G]$.

	Our desired model is $V[G]$. Note that $\lambda$ remains inaccessible in $V[G]$ and, since 
  $\square(\lambda, < \lambda)$ holds, $\lambda$ is not weakly compact. Suppose for sake of contradiction 
  that $S = \langle I \times \kappa, \{R\} \rangle$ is a $\nu$-system in $V[G]$ such that 
  $\kappa < \lambda \leq \nu$, $\nu$ is regular, and $S$ has no cofinal branch. In $V[G*H]$, $\lambda$ 
  is supercompact. Fix $j:V[G*H] \rightarrow M$ witnessing that $\lambda$ is $\nu$-supercompact. 
  Let $\eta = \sup(j``\nu)$, let $\xi = \min(j(I) \setminus \eta)$ and, for $\gamma < \kappa$, let 
  $b_\gamma = \{u \in S \mid j(u) <_{j(R)} (\xi, \gamma)\}$. Again, it is easily verified, exactly 
  as in the proof of Theorem \ref{indestthm}, that $\bar{b} = \{b_\gamma \mid \gamma < \kappa\}$ is a full 
  set of branches through $S$. $\bar{b} \in V[G*H]$ and, in $V[G]$, $\bb{T}^{\kappa^+}$ is 
  $\lambda$-distributive, so Lemma \ref{neemanlemma} implies that $S$ has a cofinal branch in $V[G]$.
\end{proof}

\section{Counterexamples to the narrow system property} \label{counterexampleSect}

In this section, we construct narrow $\lambda$-systems with no cofinal branches from certain subadditive functions with domain $[\lambda]^2$. We then show that the existence of such functions follows from modified square principles.

\begin{proposition} \label{subadditiveProp}
	Suppose $\kappa < \lambda$ are cardinals, with $\lambda$ regular, and suppose there is $d:[\lambda]^2 \rightarrow \kappa$ satisfying:
	\begin{enumerate}
		\item{for all $\alpha < \beta < \gamma < \lambda$, $d(\alpha, \gamma) \leq \max(d(\alpha, \beta), d(\beta, \gamma))$;}
		\item{for all $\alpha < \beta < \gamma < \lambda$, $d(\alpha, \beta) \leq \max(d(\alpha, \gamma), d(\beta, \gamma))$;}
		\item{for all unbounded $I \subseteq \lambda$, $d``[I]^2$ is unbounded in $\kappa$.}
	\end{enumerate}
	Then there is a $\lambda$-system $S = \langle \lambda \times \kappa, \{R\} \rangle$ with no cofinal branch.
\end{proposition}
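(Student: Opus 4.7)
The plan is to define the relation $R$ on $\lambda \times \kappa$ directly from $d$ in a way that makes the ``levels'' compatible with the values of $d$. Specifically, I would set $(\alpha_0, \beta_0) <_R (\alpha_1, \beta_1)$ if and only if $\alpha_0 < \alpha_1$, $\beta_0 = \beta_1$, and $d(\alpha_0, \alpha_1) \leq \beta_0$. So $R$ consists of ``horizontal'' edges within a single column $\lambda \times \{\beta\}$, with $(\alpha_0, \beta)$ below $(\alpha_1, \beta)$ exactly when the $d$-distance between $\alpha_0$ and $\alpha_1$ fits under the threshold $\beta$.

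Next I would verify that $S = \langle \lambda \times \kappa, \{R\} \rangle$ is a $\lambda$-system. Transitivity follows immediately from condition (1): if $(\alpha_0, \beta) <_R (\alpha_1, \beta) <_R (\alpha_2, \beta)$, then $d(\alpha_0, \alpha_2) \leq \max(d(\alpha_0, \alpha_1), d(\alpha_1, \alpha_2)) \leq \beta$. For tree-likeness, suppose $(\alpha_0, \beta_0)$ and $(\alpha_1, \beta_1)$ both sit below $(\alpha_2, \beta_2)$; then $\beta_0 = \beta_1 = \beta_2 =: \beta$, and assuming $\alpha_0 < \alpha_1$ (the case $\alpha_1 < \alpha_0$ being symmetric, $\alpha_0 = \alpha_1$ trivial), condition (2) yields $d(\alpha_0, \alpha_1) \leq \max(d(\alpha_0, \alpha_2), d(\alpha_1, \alpha_2)) \leq \beta$, so $(\alpha_0, \beta) <_R (\alpha_1, \beta)$. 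Condition (3) of the system definition is built into the construction. For condition (4), given $\alpha_0 < \alpha_1$, simply take $\beta = d(\alpha_0, \alpha_1) < \kappa$, so that $(\alpha_0, \beta) <_R (\alpha_1, \beta)$ holds by definition.

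Finally, I would show $S$ has no cofinal branch. Because $<_R$ only connects nodes with the same second coordinate, any branch $b$ of $S$ is contained in a single column $\lambda \times \{\beta^*\}$ for some $\beta^* < \kappa$. If $b$ were cofinal, then $I := \{\alpha < \lambda \mid (\alpha, \beta^*) \in b\}$ would be unbounded in $\lambda$, and any two elements $\alpha_0 < \alpha_1$ of $I$ would satisfy $d(\alpha_0, \alpha_1) \leq \beta^*$ (since $(\alpha_0, \beta^*)$ and $(\alpha_1, \beta^*)$ must be $R$-comparable via some witness in $b$ above them, and tree-likeness together with the structure of $R$ forces $(\alpha_0, \beta^*) <_R (\alpha_1, \beta^*)$). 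Hence $d``[I]^2 \subseteq \beta^* + 1$, contradicting condition (3).

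There is no genuine obstacle here: the three hypotheses on $d$ are tailored exactly to yield transitivity, tree-likeness, and branchlessness respectively. The only minor care needed is in deducing that members of a putative cofinal branch within a single column are pairwise $R$-related, which follows from tree-likeness applied to a witness sitting above any two given branch nodes.
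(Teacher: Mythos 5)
Your proof is correct and takes essentially the same approach as the paper: you define exactly the same relation, $(\alpha_0,\beta_0) <_R (\alpha_1,\beta_1)$ iff $\beta_0 = \beta_1 \geq d(\alpha_0,\alpha_1)$, and derive transitivity, tree-likeness, and branchlessness from properties (1), (2), and (3) of $d$ respectively, just as the paper does. The only cosmetic difference is in the final step, where you argue comparability of two branch nodes via a witness above them; by the paper's definition a branch is a set of pairwise $R$-comparable nodes, so two distinct branch nodes $(\alpha_0,\beta^*)$, $(\alpha_1,\beta^*)$ with $\alpha_0 < \alpha_1$ are directly comparable and the detour through tree-likeness is unnecessary (though harmless).
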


\begin{proof}
	To define the $\lambda$-system $S = \langle \lambda \times \kappa, \{R\} \rangle$, we only need to 
  specify the relation $R$. Given $\alpha_0 < \alpha_1 < \lambda$ and $\beta_0, \beta_1 < \kappa$, let 
  $(\alpha_0, \beta_0) <_R (\alpha_1, \beta_1)$ iff $\beta_0 = \beta_1 \geq d(\{\alpha_0, \alpha_1\})$. 
  It is simple to check that $S$ as defined is a $\lambda$-system; the fact that $R$ is transitive 
  follows from property (1) of $d$, and the fact that $R$ is tree-like follows from property (2) of $d$. 
  The fact that $S$ has no cofinal branch follows from property (3) of $d$.
\end{proof}

\begin{remark}
	We call a function $d:[\lambda]^2 \rightarrow \kappa$ satisfying (1) and (2) from the statement of Proposition \ref{subadditiveProp} \emph{subadditive}. We call a function satisfying (3) \emph{unbounded}.
\end{remark}

We now introduce two different modifications of $\square(\lambda)$ and show that each implies the existence of such subadditive functions. The first is a variant of \emph{indexed square}, a notion studied in \cite{cs} and \cite{cfm}.

\begin{definition} \label{ind_square_def}
	Let $\kappa < \lambda$ be infinite regular cardinals. A $\square^{\mathrm{ind}}(\lambda, \kappa)$-sequence is a matrix $\vec{\mathcal{C}} = \langle C_{\alpha,i} \mid \alpha < \lambda, i(\alpha) \leq i < \kappa \rangle$ satisfying the following conditions.
	\begin{enumerate}
		\item{For all limit $\alpha < \lambda$, $i(\alpha) < \kappa$.}
		\item{For all limit $\alpha < \lambda$ and $i(\alpha) \leq i < \kappa$, $C_{\alpha,i}$ is a club in $\alpha$.}
		\item{For all limit $\alpha < \lambda$ and $i(\alpha) \leq i < j < \kappa$, $C_{\alpha,i} \subseteq C_{\alpha,j}$.}
		\item{For all limit $\alpha < \beta < \lambda$ and $i(\beta) \leq i < \kappa$, if $\alpha \in C'_{\beta,i}$, then $i(\alpha) \leq i$ and $C_{\beta,i} \cap \alpha = C_{\alpha,i}$.}
		\item{For all limit $\alpha < \beta < \lambda$, there is $i < \kappa$ such that $\alpha \in C'_{\beta,i}$ (and hence $\alpha \in C'_{\beta,j}$ for all $i \leq j < \kappa$).}
		\item{There is no club $D \subseteq \lambda$ such that, for all $\alpha \in D'$, there is $i < \kappa$ such that $D \cap \alpha = C_{\alpha, i}$. (Such a club $D$ would be called a \emph{thread through $\vec{\mathcal{C}}$}.)}
	\end{enumerate}
	$\square^{\mathrm{ind}}(\lambda, \kappa)$ is the assertion that there is a $\square^{\mathrm{ind}}(\lambda, \kappa)$-sequence.
\end{definition}

\begin{proposition} \label{threadprop}
  Definition \ref{ind_square_def} is unchanged if we replace condition (6) by the following seemingly weaker condition:
	\[
	\mbox{There is no club } D \subseteq \lambda \mbox{ and } i < \kappa \mbox{ such that, for all } \alpha \in D', D \cap \alpha = C_{\alpha,i}.
	\]
\end{proposition}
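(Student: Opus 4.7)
The goal is to establish the non-trivial direction: from an object witnessing the failure of the weakened clause (6), namely a club $D$ together with a function $\alpha \mapsto i_\alpha$ assigning to each $\alpha \in D'$ an index $i_\alpha < \kappa$ with $D \cap \alpha = C_{\alpha, i_\alpha}$, produce a single uniform index $i^\ast < \kappa$ that witnesses the failure of the original clause (6). I plan to use the same club $D$, only replacing the pointwise indices by a well-chosen constant $i^\ast$.

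The first step is a pigeonhole argument. Because $\lambda$ is regular, $\kappa < \lambda$, and $|D'| = \lambda$, there exist an unbounded $E \subseteq D'$ and some $i^\ast < \kappa$ with $i_\alpha = i^\ast$ for every $\alpha \in E$. At this point $i^\ast$ only provably works at members of $E$, so in general $E$ has many gaps in $D'$.

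The second step uses coherence to spread the value $i^\ast$ from $E$ to all of $D'$. Given any $\alpha \in D'$, pick $\beta \in E$ with $\beta > \alpha$. Since $\alpha \in D'$ satisfies $\sup(D \cap \alpha) = \alpha$, and $\alpha < \beta$, we have $\alpha \in (D \cap \beta)' = C'_{\beta, i^\ast}$. Clause (4) of Definition \ref{ind_square_def} then yields $i(\alpha) \leq i^\ast$ together with $C_{\alpha, i^\ast} = C_{\beta, i^\ast} \cap \alpha = (D \cap \beta) \cap \alpha = D \cap \alpha$, which is exactly what is needed.

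I do not anticipate any real obstacle here. The pigeonhole step is immediate from the regularity of $\lambda$, and the propagation is a direct invocation of the coherence axiom. The only point meriting care is checking that $\alpha$ really is a limit point of $C_{\beta, i^\ast}$, but this is forced by $\alpha \in D'$ together with $D \cap \beta = C_{\beta, i^\ast}$. Putting the two steps together shows that the hypothesized $D$, now paired with the single index $i^\ast$, already violates the original clause (6), so the two formulations are equivalent.
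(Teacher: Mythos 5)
Your proposal is correct and is essentially identical to the paper's proof: both fix the same witnessing club $D$, apply the regularity of $\lambda$ to stabilize the pointwise indices on an unbounded set $E \subseteq D'$, and then use the coherence clause (4) of Definition \ref{ind_square_def} (via $\alpha \in C'_{\beta, i^\ast}$ for $\beta \in E$ above $\alpha$) to propagate the constant index $i^\ast$ to every $\alpha \in D'$. No gaps; the one point you flagged as needing care ($\alpha$ being a limit point of $C_{\beta, i^\ast}$) is handled exactly as in the paper.
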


\begin{proof}
	Suppose $\vec{\mathcal{C}} = \langle C_{\alpha, i} \mid \alpha < \lambda, i(\alpha) \leq i < \kappa \rangle$ satisfies conditions (1)-(5) of Definition 
  \ref{ind_square_def} and there is a club $D \subseteq \lambda$ such that, for every $\alpha \in D'$, there is $k < \kappa$ such that $D \cap \alpha = C_{\alpha,k}$. For each $\alpha \in D'$, let $k(\alpha)$ be such a $k$. Since $\kappa < \lambda$ and $\lambda$ is regular, there is an unbounded $A \subseteq D'$ and a $k^* < \kappa$ such that, for all $\alpha \in A$, $k(\alpha) = k^*$. We claim that, for all $\alpha \in D'$, $D \cap \alpha = C_{\alpha, k^*}$. To see this, fix $\alpha \in D'$, and find $\beta \in A \setminus (\alpha + 1)$. Then $D \cap \beta = C_{\beta, k^*}$, so, since $\alpha < \beta$ and 
  $\alpha \in D'$, we also have $\alpha \in C'_{\beta, k^*}$. Thus, by condition (4) of Definition \ref{ind_square_def}, 
  $D \cap \alpha = (D \cap \beta) \cap \alpha = C_{\beta, k^*} \cap \alpha = C_{\alpha, k^*}$.
\end{proof}

We will deal with the consistency of $\square^{\mathrm{ind}}(\lambda, \kappa)$ in Section \ref{indexedSquareSect}. 
We show now that it easily gives rise to subadditive, unbounded functions.

\begin{proposition}
	Suppose $\kappa < \lambda$ are infinite regular cardinals and $\square^{\mathrm{ind}}(\lambda, \kappa)$ holds. Then there is a subadditive, unbounded function $d:[\lambda]^2 \rightarrow \kappa$.
\end{proposition}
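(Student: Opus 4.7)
The plan is to extract $d$ directly from the indexed square sequence $\vec{\mathcal{C}} = \langle C_{\alpha,i} \mid \alpha<\lambda,\ i(\alpha)\le i<\kappa\rangle$ witnessing $\square^{\mathrm{ind}}(\lambda,\kappa)$. The natural definition lives on pairs of limit ordinals: for limit $\alpha<\beta<\lambda$, clause (5) of Definition \ref{ind_square_def} guarantees that $\{i<\kappa : \alpha\in C'_{\beta,i}\}$ is nonempty, so I set
\[
d^*(\alpha,\beta)=\min\{i\ge i(\beta) : \alpha\in C'_{\beta,i}\};
\]
clause (4) ensures that any such $i$ automatically satisfies $i\ge i(\alpha)$, so $C_{\alpha,i}$ is defined. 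To extend this to all of $[\lambda]^2$, I let $\bar\alpha$ denote the least limit ordinal $\ge\alpha$ (which exists and lies below $\lambda$ since $\lambda$ is uncountable and regular) and declare $d(\alpha,\beta)=d^*(\bar\alpha,\bar\beta)$ when $\bar\alpha<\bar\beta$ and $d(\alpha,\beta)=0$ otherwise.

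Subadditivity of $d^*$ will follow from the coherence and monotonicity of $\vec{\mathcal{C}}$. Given limit ordinals $\alpha<\beta<\gamma$ and $i=\max(d^*(\alpha,\beta),d^*(\beta,\gamma))$, clause (3) upgrades both memberships to $\alpha\in C'_{\beta,i}$ and $\beta\in C'_{\gamma,i}$, whereupon clause (4) gives $C_{\gamma,i}\cap\beta=C_{\beta,i}$ and therefore $\alpha\in C'_{\gamma,i}$, yielding clause (1) of Proposition \ref{subadditiveProp}. Clause (2) is proved symmetrically: setting $i=\max(d^*(\alpha,\gamma),d^*(\beta,\gamma))$, the same coherence combined with the fact that $\alpha<\beta$ and $\alpha$ accumulates points of $C_{\gamma,i}\cap\beta=C_{\beta,i}$ yields $d^*(\alpha,\beta)\le i$. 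Subadditivity for $d$ then reduces to a short case analysis on which of $\bar\alpha,\bar\beta,\bar\gamma$ coincide, all cases being trivial once the coincident pairs are assigned the value $0$.

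The main step is unboundedness, which I will prove by contradiction. Suppose $I\subseteq\lambda$ is unbounded but $d``[I]^2\subseteq i^*<\kappa$. Replacing $I$ by $\{\bar\alpha:\alpha\in I\}$, which is still unbounded in $\lambda$, I may assume $I$ consists entirely of limits and $d^*``[I]^2\subseteq i^*$. For every $\alpha<\beta$ in $I$, clause (3) upgrades the witness to $\alpha\in C'_{\beta,i^*}$, and then clause (4) forces both $i(\beta)\le i^*$ and $C_{\beta,i^*}\cap\alpha=C_{\alpha,i^*}$. I will show that $D:=\bigcup_{\beta\in I}C_{\beta,i^*}$ is a single-index thread through $\vec{\mathcal{C}}$: coherence implies $D\cap\beta=C_{\beta,i^*}$ for each $\beta\in I$, whence $D$ is closed and unbounded, and for every limit $\delta\in D'$, choosing $\beta\in I$ with $\beta>\delta$ gives $\delta\in C'_{\beta,i^*}$ and therefore $D\cap\delta=C_{\beta,i^*}\cap\delta=C_{\delta,i^*}$. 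This contradicts clause (6) via Proposition \ref{threadprop}, which is precisely what legitimizes restricting to a single index $i^*$.

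The principal obstacle is this unboundedness step; subadditivity is essentially formal once coherence is in hand. Without Proposition \ref{threadprop}, the natural object $D$ produced from a bounded set of $d$-values would be a club that looks like $C_{\delta,i^*}$ locally but need not be a thread in the sense originally demanded by clause (6), so Proposition \ref{threadprop} does the work of collapsing a potentially index-varying thread to a fixed-index one.
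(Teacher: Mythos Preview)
Your proof is correct and follows essentially the same route as the paper: define $d$ via the least index $i$ for which one limit ordinal is an accumulation point of the other's club, verify subadditivity from coherence (clauses (3) and (4)), and derive unboundedness by showing that a bounded family produces a thread $D=\bigcup_{\beta}C_{\beta,i^*}$. The only cosmetic difference is that the paper transports the problem to limit ordinals via the order-isomorphism $\pi:\lambda\to\mathrm{lim}(\lambda)$, whereas you round up via $\alpha\mapsto\bar\alpha$ and patch in the value $0$ when $\bar\alpha=\bar\beta$; both devices work, though the bijection avoids the case analysis.

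One minor point: your final paragraph overstates the role of Proposition~\ref{threadprop}. The club $D$ you construct satisfies $D\cap\delta=C_{\delta,i^*}$ for every $\delta\in D'$ with a \emph{fixed} index $i^*$, so it already violates clause (6) of Definition~\ref{ind_square_def} directly (taking $i=i^*$ for each $\delta$). Proposition~\ref{threadprop} would only be needed if your thread had indices varying with $\delta$, which it does not.
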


\begin{proof}
	Let $\pi$ be the unique order-preserving bijection from $\lambda$ to $\mathrm{lim}(\lambda)$. 
  For all $\alpha < \beta < \lambda$, let $d(\alpha, \beta)$ be the least $i < \kappa$ such that 
  $\pi(\alpha) \in C'_{\pi(\beta),i}$. By condition (5) of Definition \ref{ind_square_def}, $d$ is well-defined. 
  We now verify that $d$ satisfies conditions (1)-(3) from the statement of Proposition \ref{subadditiveProp}. 
  Note first that, if $\alpha < \beta < \lambda$ and $d(\alpha, \beta) \leq i < \kappa$, then $\pi(\alpha) \in 
  C'_{\pi(\beta), i}$, so, by condition (4) of Definition \ref{ind_square_def}, $C_{\pi(\beta), i} \cap 
  \alpha = C_{\pi(\alpha), i}$.

	To see (1), let $\alpha < \beta < \gamma < \lambda$, and suppose $i \geq \max(d(\alpha,\beta), d(\beta, \gamma))$. 
  Since $i \geq d(\alpha, \beta)$, $\pi(\alpha) \in C'_{\pi(\beta), i}$ and, since $i \geq d(\beta, \gamma)$, 
  $C_{\pi(\gamma), i} \cap \pi(\beta) = C_{\pi(\beta),i}$. Thus, $\pi(\alpha) \in C'_{\pi(\gamma), i}$, so 
  $i \geq d(\alpha, \gamma)$. 
  
  To see (2), let $\alpha < \beta < \gamma < \lambda$, and suppose $i \geq \max(d(\alpha, \gamma), 
  d(\beta, \gamma))$. As above, this implies that $\pi(\alpha) \in C'_{\pi(\gamma),i}$ and $C_{\pi(\gamma), i} 
  \cap \pi(\beta) = C_{\pi(\beta),i}$, so $\pi(\alpha) \in C'_{\pi(\beta), i}$ and hence $i \geq d(\alpha, \beta)$.

	To check (3), suppose for sake of contradiction that $I \subseteq \lambda$ is unbounded and 
  $j < \kappa$ is such that $d``[I]^2 \subseteq j$. Let 
  $D = \bigcup_{\beta \in I} C_{\pi(\beta), j}$. Note that, if $\beta < \gamma < \lambda$ and $\beta, \gamma \in I$, 
  we have $d(\beta, \gamma) \leq j$, so $C_{\pi(\gamma), j} \cap \pi(\beta) = C_{\pi(\beta), j}$. This implies that, for all $\beta \in I$, 
  $D \cap \pi(\beta) = C_{\pi(\beta), j}$. Therefore, since $I$ is unbounded in $\lambda$ and, for all $\beta \in I$, 
  $C_{\pi(\beta), j}$ is club in $\pi(\beta)$, $D$ is club in $\lambda$. Let $\alpha < \lambda$ be such 
  that $\pi(\alpha) \in D'$, and let $\beta \in I \setminus \alpha$. Then $D \cap \pi(\beta) = C_{\pi(\beta), j}$, 
  so $\pi(\alpha) \in C'_{\pi(\beta),j}$ and hence $D \cap \pi(\alpha) = C_{\pi(\beta), j} \cap \pi(\alpha) = 
  C_{\pi(\alpha), j}$. Thus, $D$ is a thread through $\vec{\mathcal{C}}$, which is a contradiction.
\end{proof}

The second square variation we consider is one in which we make slight additional demands on the order types of the clubs.

\begin{definition}
	Let $\kappa < \lambda$ be infinite regular cardinals. A $\square^\kappa(\lambda)$-sequence is a $\square(\lambda)$-sequence $\vec{C} = \langle C_\alpha \mid \alpha < \lambda \rangle$ such that, for stationarily many $\alpha \in S^\lambda_\kappa$, $\mathrm{otp}(C_\alpha) < \alpha$. $\square^\kappa(\lambda)$ is the assertion that there is a $\square^\kappa(\lambda)$-sequence.
\end{definition}

The proofs of Propositions 29-31 in \cite{lh} yield the following results.

\begin{proposition} \label{otprop}
	Let $\kappa < \lambda$ be infinite regular cardinals. The following are equivalent:
	\begin{enumerate}
		\item{$\square^\kappa(\lambda)$ holds;}
		\item{there is a $\square(\lambda)$-sequence $\vec{C} = \langle C_\alpha \mid \alpha < \lambda \rangle$ and a stationary set $S \subseteq S^\lambda_\kappa$ such that, for all $\alpha \in S$:
		\begin{itemize}
			\item{$\mathrm{otp}(C_\alpha) = \kappa$;}
			\item{for all $\beta$ such that $\alpha < \beta < \lambda$, $\alpha \not\in C'_\beta$.}
		\end{itemize}}
	\end{enumerate}
\end{proposition}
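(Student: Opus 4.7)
Direction $(2) \Rightarrow (1)$ is immediate: $S$ is stationary, so stationarily many $\alpha \in S$ satisfy $\alpha > \kappa$, and for any such $\alpha$ we have $\mathrm{otp}(C_\alpha) = \kappa < \alpha$, witnessing $\square^\kappa(\lambda)$.

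For $(1) \Rightarrow (2)$, fix a $\square^\kappa(\lambda)$-sequence $\vec{C} = \langle C_\alpha \mid \alpha < \lambda \rangle$. The set $T_0 := \{\alpha \in S^\lambda_\kappa \mid \mathrm{otp}(C_\alpha) < \alpha\}$ is stationary, and the map $\alpha \mapsto \mathrm{otp}(C_\alpha)$ is regressive on it. Fodor's lemma then yields a stationary $T \subseteq T_0$ and an ordinal $\rho < \lambda$ with $\mathrm{otp}(C_\alpha) = \rho$ for every $\alpha \in T$; since each such $C_\alpha$ is cofinal in $\alpha$, one has $\cf(\rho) = \kappa$. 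The key intermediate observation is that, for $\alpha < \beta$ both in $T$, $\alpha \notin C'_\beta$: otherwise coherence forces $C_\beta \cap \alpha = C_\alpha$, placing $\alpha$ at position $\rho$ in the enumeration of $C_\beta$, but $\mathrm{otp}(C_\beta) = \rho$ would then force $\alpha = \sup(C_\beta) = \beta$, a contradiction.

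The remaining task is to upgrade the relation $\alpha \notin C'_\beta$ so that it holds for \emph{every} $\beta > \alpha$ (not only those in $T$), and simultaneously to arrange $\mathrm{otp}(D_\alpha) = \kappa$ for $\alpha$ in a stationary set $S \subseteq T$. The plan is to construct a new $\square(\lambda)$-sequence $\vec{D}$ by recursion on $\beta < \lambda$. For $\alpha \in T$, I let $D_\alpha$ be the cofinal subset of $C_\alpha$ of order type $\kappa$ indexed by a fixed cofinal $A \subseteq \rho$ with $\mathrm{otp}(A) = \kappa$. For $\beta \notin T$ with $A_\beta := T \cap C'_\beta$ bounded below $\beta$, I set $D_\beta = \{c \in C_\beta \mid c > \sup(A_\beta)\}$, a tail of $C_\beta$; coherence then follows from the identity $A_\beta \cap \gamma = A_\gamma$ valid for $\gamma \in C'_\beta$.

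The main obstacle is the case when $A_\beta$ is unbounded in $\beta$ for some $\beta \notin T$, where the naive tail-construction collapses to the empty set. I would handle this by interleaving the recursion with an auxiliary choice: inside each interval determined by consecutive elements of $A_\beta$, extract a cofinal club of ordinals of cofinality $\ne \kappa$ and splice these together, taking care that the splicing commutes with restriction to $\gamma \in C'_\beta$ so that coherence with the already-defined $D_\gamma$ is preserved. Once $\vec{D}$ is assembled, the nonexistence of a thread through $\vec{D}$ is deduced from the nonexistence of a thread through $\vec{C}$ by re-inserting the excised initial segments below each $\alpha \in T$. Setting $S = T$ then yields clause (2). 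The detailed verification, which is where the real work lies, parallels Propositions 29--31 of \cite{lh}.
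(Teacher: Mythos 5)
Your direction $(2)\Rightarrow(1)$, the Fodor argument producing $T$ and $\rho$ with $\cf(\rho)=\kappa$, and the ``key intermediate observation'' are all correct, and your tail-definition of $D_\beta$ for $\beta \notin T$ with $A_\beta$ bounded (together with the identity $A_\beta \cap \gamma = A_\gamma$ for $\gamma \in C'_\beta$) does cohere. But you failed to apply your own key observation: it implies that for \emph{every} $\beta < \lambda$, $A_\beta = T \cap C'_\beta$ has at most one element. Indeed, if $\alpha_1 < \alpha_2$ were both in $T \cap C'_\beta$, then, since $C'_{\alpha_2} = C'_\beta \cap \alpha_2$ by coherence, we would have $\alpha_1 \in T \cap C'_{\alpha_2}$, which your observation rules out (equivalently: each $\alpha \in T \cap C'_\beta$ must sit at position exactly $\rho$ in $C_\beta$, and the map $\gamma \mapsto \mathrm{otp}(C_\beta \cap \gamma)$ is injective on $C'_\beta$). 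So the case you call ``the main obstacle''---$A_\beta$ unbounded in $\beta$---is vacuous, and the interleaving/splicing machinery you sketch for it addresses a nonexistent problem.

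The genuine obstacle lies elsewhere, and your proposal does not touch it: coherence at limit points of the thinned clubs $D_\alpha$ for $\alpha \in T$. Fodor gives no control over $\rho$, and typically $\rho > \kappa$. If $\kappa > \omega$ and $A \subseteq \rho$ is a club of order type $\kappa$ (if $A$ is merely cofinal, $D_\alpha = \pi_\alpha[A]$ is not even closed, where $\pi_\alpha$ enumerates $C_\alpha$), then $D_\alpha$ has limit points $\gamma < \alpha$, and coherence demands $D_\alpha \cap \gamma = D_\gamma$; but $\gamma \notin T$ (as $\mathrm{otp}(C_\gamma) < \rho$), so your $D_\gamma$ is $C_\gamma$ or a tail of $C_\gamma$, whereas $D_\alpha \cap \gamma = \pi_\alpha[A \cap \xi]$ is a thin subset of $C_\gamma$. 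Your construction thus works only when $\kappa = \omega$ (take $A$ of type $\omega$, with no limit points) or when $\rho$ happens to equal $\kappa$ (no thinning needed). The repair, which is essentially what the cited Propositions 29--31 of \cite{lh} do, is to make the thinning an \emph{intrinsic function of the order type}, so it automatically commutes with coherence: fix a club $e \subseteq \rho$ with $\mathrm{otp}(e) = \kappa$, set $u(\xi) = (e \cap \xi) \cup [\sup(e \cap \xi), \xi)$ for limit $\xi \leq \rho$, and define $D_\beta = \pi_\beta[u(\mathrm{otp}(C_\beta))]$ if $\mathrm{otp}(C_\beta) \leq \rho$, and $D_\beta = C_\beta \setminus \pi_\beta(\rho)$ otherwise. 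One checks $u(\xi) \cap \zeta = u(\zeta)$ for $\zeta \in u(\xi)'$, which yields coherence everywhere; $\mathrm{otp}(D_\alpha) = \kappa$ for $\alpha \in T$; $\alpha \notin D'_\beta$ for all $\beta > \alpha$ and $\alpha \in T$, since any limit point of $D_\beta$ has $C$-order type strictly below or strictly above $\rho$; and since $D_\beta \subseteq C_\beta$ and hence $D'_\beta \subseteq C'_\beta$, any thread $D$ through $\vec{D}$ yields the thread $\bigcup_{\gamma \in D'} C_\gamma$ through $\vec{C}$---this is the precise form of your ``re-inserting the excised initial segments'' remark. With $S = T$ this completes $(1)\Rightarrow(2)$; as written, though, your construction does not produce a coherent sequence.
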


\begin{proposition}
	If $\theta < \kappa < \lambda$ are infinite regular cardinals, then $\square^\kappa(\lambda) \Rightarrow \square^\theta(\lambda)$.
\end{proposition}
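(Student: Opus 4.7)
My plan is to show that, in fact, any $\square^\kappa(\lambda)$-sequence of the form guaranteed by Proposition \ref{otprop} is already a $\square^\theta(\lambda)$-sequence, so no new sequence needs to be built.

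First, I apply Proposition \ref{otprop} to fix a $\square(\lambda)$-sequence $\vec{C} = \langle C_\alpha \mid \alpha < \lambda \rangle$ together with a stationary set $S \subseteq S^\lambda_\kappa$ such that, for all $\alpha \in S$, $\mathrm{otp}(C_\alpha) = \kappa$ and $\alpha \notin C'_\beta$ for any $\beta \in (\alpha, \lambda)$. I then define
\[
T = \{\beta \in S^\lambda_\theta \mid \beta > \kappa \text{ and } \beta \in C'_\alpha \text{ for some } \alpha \in S\}.
\]
The key step is to show $T$ is stationary in $\lambda$. Given a club $D \subseteq \lambda$, I pick $\alpha \in S \cap D'$ with $\alpha > \kappa$ (possible since $S$ is stationary). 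Then $C_\alpha \cap D$ is club in $\alpha$, and, since $\mathrm{cf}(\alpha) = \kappa > \theta$, the set $S^\alpha_\theta$ meets $(C_\alpha \cap D)'$; choosing $\beta$ large enough in this intersection gives $\beta \in T \cap D$.

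Next, I verify the order-type bound. For any $\beta \in T$, fix a witnessing $\alpha \in S$ with $\beta \in C'_\alpha$. Coherence of $\vec{C}$ yields $C_\beta = C_\alpha \cap \beta$, so $\mathrm{otp}(C_\beta) < \mathrm{otp}(C_\alpha) = \kappa$. Because $\beta > \kappa$, this gives $\mathrm{otp}(C_\beta) < \beta$, exactly as required. Since $\vec{C}$ is a $\square(\lambda)$-sequence and $T$ is a stationary subset of $S^\lambda_\theta$ on which $\mathrm{otp}(C_\alpha) < \alpha$ holds, $\vec{C}$ is a $\square^\theta(\lambda)$-sequence and $\square^\theta(\lambda)$ holds.

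The only slightly delicate point is the stationarity of $T$; this is a standard reflection-style argument using the cofinality gap $\theta < \kappa$, so I expect no genuine obstacle, just the bookkeeping of ensuring the witnessing $\beta$ lies above $\kappa$ so that $\mathrm{otp}(C_\beta) < \beta$ is automatic.
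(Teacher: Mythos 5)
Your proof is correct and is essentially the intended argument: the paper gives no in-line proof (it defers to Propositions 29--31 of \cite{lh}), and the argument there proceeds just as you do, first normalizing via Proposition \ref{otprop} so that $\mathrm{otp}(C_\alpha) = \kappa$ on the stationary set $S \subseteq S^\lambda_\kappa$, then reflecting to cofinality-$\theta$ limit points of the clubs $C_\alpha$ and using coherence to get $\mathrm{otp}(C_\beta) < \kappa$. You have also correctly handled the one delicate point, namely insisting that the witnessing $\beta$ lie above $\kappa$ so that $\mathrm{otp}(C_\beta) < \kappa < \beta$, which is exactly why the normalization $\mathrm{otp}(C_\alpha) = \kappa$ (rather than merely $\mathrm{otp}(C_\alpha) < \alpha$) is needed.
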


We now introduce a function, due to Todor\v{c}evi\'{c} (cf. \cite{todorvcevic}, Section 7.2), that can be derived from a $\square(\lambda)$-sequence. 
In what follows, if $\alpha$ and $\beta$ are ordinals, we say that $\alpha$ \emph{divides} $\beta$ if there is an 
ordinal $\gamma$ such that $\beta = \alpha \cdot \gamma$. First, define $\Lambda_\kappa : [\lambda]^2 \rightarrow \lambda$ by 
\[\Lambda_\kappa (\alpha, \beta) = \max (\{ \xi \in C_\beta \cap (\alpha + 1) \mid \kappa \mbox{ divides } \mathrm{otp}(C_\beta \cap \xi) \}) \]
Next, let $\rho_\kappa : [\lambda]^2 \rightarrow \kappa$ be defined recursively by
\begin{multline*}
\rho_\kappa (\alpha, \beta) = \sup (\{\mathrm{otp}(C_\beta \cap [\Lambda_\kappa(\alpha, \beta),\alpha)), \rho_\kappa(\alpha, \min(C_\beta \setminus \alpha)), \\ \rho_\kappa(\xi, \alpha) \mid \xi \in C_\beta \cap [\Lambda_\kappa(\alpha, \beta), \alpha)\}).
\end{multline*}

The proof of the following proposition can be found in \cite{todorvcevic}.

\begin{proposition}[\cite{todorvcevic}, Lemma 7.2.2 and Theorem 7.2.13] \label{todorprop}
	Let $\kappa < \lambda$ be infinite regular cardinals, let $\vec{C}$ be a $\square(\lambda)$-sequence, and let $\rho_\kappa$ be derived as above from $\vec{C}$.
	\begin{enumerate}
		\item{For all $\alpha < \beta < \gamma < \lambda$, $\rho_\kappa(\alpha, \gamma) \leq \max(\rho_\kappa(\alpha, \beta), \rho_\kappa(\beta, \gamma))$.}
		\item{For all $\alpha < \beta < \gamma < \lambda$, $\rho_\kappa(\alpha, \beta) \leq \max(\rho_\kappa(\alpha, \gamma), \rho_\kappa(\beta, \gamma))$.}
		\item{If $\vec{C}$ is as in (2) of Proposition \ref{otprop}, then, for all unbounded $I \subseteq \lambda$, $\rho_\kappa ``[I]^2$ is unbounded in $\kappa$.}
	\end{enumerate}
\end{proposition}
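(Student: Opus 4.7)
The plan is to prove items (1) and (2) simultaneously by transfinite induction on $\gamma$, and then to derive item (3) from the coherence of $\vec{C}$ together with the specialness hypothesis from Proposition \ref{otprop}(2). The fundamental tool for (1) and (2) is the coherence condition on $\vec{C}$: if $\alpha \in C'_\beta$, then $C_\beta \cap \alpha = C_\alpha$. This lets us identify segments of clubs at different levels of the walk defining $\rho_\kappa$.

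For item (1), fix $\alpha < \beta < \gamma < \lambda$ and set $M = \max(\rho_\kappa(\alpha, \beta), \rho_\kappa(\beta, \gamma))$. I would split into cases based on the position of $\beta$ relative to the walk from $\gamma$. If $\beta \geq \min(C_\gamma \setminus \alpha)$, then the walk from $\gamma$ down to $\alpha$ passes through or above $\beta$, so by definition $\rho_\kappa(\beta, \gamma)$ already records the relevant order-type contribution from $C_\gamma$; one then invokes the induction hypothesis on the pair $(\alpha, \min(C_\gamma \setminus \alpha))$, which is at most $\beta < \gamma$, to bound $\rho_\kappa(\alpha, \gamma)$ by $M$. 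If instead $\beta < \min(C_\gamma \setminus \alpha)$, then $\alpha$ and $\beta$ lie in the same ``gap'' of $C_\gamma$, so $\min(C_\gamma \setminus \beta) = \min(C_\gamma \setminus \alpha)$ and $\Lambda_\kappa(\alpha, \gamma) = \Lambda_\kappa(\beta, \gamma)$; the bound on $\rho_\kappa(\alpha, \gamma)$ then reduces to terms already present in $\rho_\kappa(\beta, \gamma)$ combined with an application of the induction hypothesis at a smaller third coordinate. The proof of (2) is symmetric, exchanging the roles of $\alpha$ and $\beta$ in the same case split.

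For item (3), suppose for contradiction that $I \subseteq \lambda$ is unbounded and $\eta < \kappa$ satisfies $\rho_\kappa``[I]^2 \subseteq \eta$. Using Proposition \ref{otprop}(2), pick $\alpha \in S$ that is a limit point of $I$. The bound $\eta$ forces the recursive unfolding of $\rho_\kappa(\alpha', \beta)$ for pairs $\alpha' < \beta$ in $I$ near $\alpha$ to pick out only initial segments of $C_\beta$ of order type a multiple of $\kappa$ that is below $\eta$; combined with subadditivity, this propagates to force, for cofinally many $\beta \in I$ above $\alpha$, that $C_\beta \cap \alpha$ is a club in $\alpha$ coherent with $C_\alpha$. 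Since $\mathrm{otp}(C_\alpha) = \kappa$ and $\eta < \kappa$, pushing this coherence along enough steps of the walk establishes $C_\beta \cap \alpha = C_\alpha$ and hence $\alpha \in C'_\beta$, contradicting the second bullet in Proposition \ref{otprop}(2).

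The main obstacle is the case analysis in (1) and (2): keeping straight the various subcases (whether $\beta \in C_\gamma$, whether $\Lambda_\kappa(\alpha, \gamma) = \Lambda_\kappa(\beta, \gamma)$, whether the minima $\min(C_\gamma \setminus \alpha)$ and $\min(C_\gamma \setminus \beta)$ coincide) requires invoking the induction hypothesis with precisely the correct arguments, and one must check that every term occurring in the $\sup$ defining $\rho_\kappa(\alpha, \gamma)$ is individually bounded by $M$. In (3), the subtlety lies in converting a quantitative bound on the walk distances into a qualitative coherence statement between $C_\beta \cap \alpha$ and $C_\alpha$; the hypothesis $\mathrm{otp}(C_\alpha) = \kappa$ from Proposition \ref{otprop}(2) is exactly what prevents this from being absorbed into an $\eta$-sized initial segment.
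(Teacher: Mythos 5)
First, note that the paper does not prove this proposition at all; it cites Todor\v{c}evi\'{c} (Lemma 7.2.2 and Theorem 7.2.13 of \cite{todorvcevic}), so your attempt must be judged against the cited proofs. For items (1) and (2), your outline (induction on $\gamma$, case split on the position of $\beta$ relative to $\min(C_\gamma \setminus \alpha)$, coherence of $\vec{C}$ to identify segments of clubs) is the right general shape, but the claim that ``the proof of (2) is symmetric, exchanging the roles of $\alpha$ and $\beta$'' is a genuine gap: (2) bounds $\rho_\kappa(\alpha, \beta)$, a quantity computed from the walk along $C_\beta$, by quantities computed from walks along $C_\gamma$, and this transfer is not a relabeling of (1) --- it requires its own induction in which the coherence clause (if $\beta \in C'_\gamma$ then $C_\gamma \cap \beta = C_\beta$) is invoked to literally replace $C_\beta$ by an initial segment of $C_\gamma$, plus separate subcases when $\beta \notin C'_\gamma$. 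As written, your case analysis never performs this replacement, so (2) is not actually proved.

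Item (3) is where your mechanism is wrong, not merely incomplete. The phrase ``initial segments of $C_\beta$ of order type a multiple of $\kappa$ that is below $\eta$'' is already incoherent: the only multiple of $\kappa$ below $\eta < \kappa$ is $0$, which is the real point of $\Lambda_\kappa$ at ordinals in $S$, not a constraint you can propagate. More seriously, your plan is to show that boundedness of $\rho_\kappa$ on $[I]^2$ forces $C_\beta \cap \alpha = C_\alpha$, i.e.\ $\alpha \in C'_\beta$, and then contradict the second bullet of Proposition \ref{otprop}(2); but boundedness of the walk characteristics simply does not yield literal club coherence at a point $\alpha \notin I$, and no argument is offered for that implication. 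The standard proof uses the two bullets in exactly the opposite roles: fix $\alpha \in S$ a limit point of $I$ and $\beta \in I$ above $\alpha$, and let $\beta = \beta_0 > \beta_1 > \dots > \beta_k = \alpha$ be the walk $\beta_{i+1} = \min(C_{\beta_i} \setminus \alpha)$. Since $\alpha \notin C'_{\beta_i}$ for every $i < k$ (second bullet, used as a hypothesis, not as the target of the contradiction), $\sigma := \max_{i<k} \sup(C_{\beta_i} \cap \alpha) < \alpha$, so for every $\xi \in I \cap (\sigma, \alpha)$ the walk from $\beta$ to $\xi$ passes through $\alpha$, whence $\rho_\kappa(\xi, \beta) \geq \rho_\kappa(\xi, \alpha)$ directly from the recursion. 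Then, because $\mathrm{otp}(C_\alpha) = \kappa$ (first bullet), the only $\kappa$-divisibility point of $C_\alpha$ is $\min(C_\alpha)$, so $\Lambda_\kappa(\xi, \alpha) = \min(C_\alpha)$ and $\rho_\kappa(\xi, \alpha) \geq \mathrm{otp}(C_\alpha \cap \xi)$, which tends to $\kappa$ as $\xi \rightarrow \alpha$; choosing $\xi \in I \cap (\sigma, \alpha)$ with $\mathrm{otp}(C_\alpha \cap \xi) > \eta$ contradicts $\rho_\kappa(\xi, \beta) \leq \eta$. The quantitative contradiction thus comes from the order type of $C_\alpha$, and the non-reflection clause serves only to stabilize the walk; your proposal inverts this and rests on an implication (boundedness $\Rightarrow$ coherence) that fails.
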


Thus, if $\vec{C}$ is as in (2) of Proposition \ref{otprop} and $\rho_\kappa$ is derived from $\vec{C}$, then $\rho_\kappa$ is subadditive 
and unbounded.

\section{Forcing indexed square} \label{indexedSquareSect}

In this section, we demonstrate how to force the existence of a $\square^{\mathrm{ind}}(\lambda, \kappa)$-sequence.

\begin{definition}
	Let $\kappa < \lambda$ be infinite regular cardinals. $\bb{P}(\lambda, \kappa)$ is a forcing poset with conditions $p = \langle C^p_{\alpha, i} \mid \alpha \leq \gamma^p, i(\alpha)^p \leq i < \kappa \rangle$ satisfying the following conditions.
	\begin{enumerate}
		\item{$\gamma^p < \lambda$ is a limit ordinal and, for all limit $\alpha \leq \gamma^p$, $i(\alpha)^p < \kappa$.}
		\item{For all limit $\alpha \leq \gamma^p$ and $i(\alpha)^p \leq i < \kappa$, $C^p_{\alpha, i}$ is a club in $\alpha$.}
		\item{For all limit $\alpha \leq \gamma^p$ and $i(\alpha)^p \leq i < j < \kappa$, $C^p_{\alpha, i} \subseteq C^p_{\alpha, j}$.}
		\item{For all limit $\alpha < \beta \leq \gamma^p$ and $i(\beta)^p \leq i < \kappa$, if $\alpha \in (C^p_{\beta, i})'$, then $i(\alpha)^p \leq i$ and $C^p_{\beta, i} \cap \alpha = C^p_{\alpha, i}$.}
		\item{For all limit $\alpha < \beta \leq \gamma^p$, there is $i < \kappa$ such that $\alpha \in (C^p_{\beta, i})'$.}
	\end{enumerate}
	If $p,q \in \bb{P}(\lambda, \kappa)$, then $q \leq p$ iff $q$ end-extends $p$, i.e.:
	\begin{itemize}
		\item{$\gamma^q \geq \gamma^p$.}
		\item{For all limit $\alpha \leq \gamma^p$, $i(\alpha)^q = i(\alpha)^p$.}
		\item{For all limit $\alpha \leq \gamma^p$ and $i(\alpha)^p \leq i < \kappa$, $C^q_{\alpha, i} = C^p_{\alpha, i}$.}
	\end{itemize}
\end{definition}

\begin{proposition}
	$\bb{P}(\lambda, \kappa)$ is $\kappa$-directed closed.
\end{proposition}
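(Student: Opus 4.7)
The plan is first to observe that $\bb{P}(\lambda,\kappa)$ is tree-like under its order — if $r \leq p$ and $r \leq q$ with, say, $\gamma^p \leq \gamma^q$, then $p = r \restriction [0,\gamma^p] = q \restriction [0,\gamma^p]$, so $q \leq p$ — and hence that $\kappa$-directed closure reduces to $\kappa$-closure. So let $\langle p_\xi \mid \xi < \eta \rangle$ be a strictly decreasing sequence with $\eta < \kappa$ a limit ordinal, and put $\gamma := \sup_{\xi < \eta} \gamma^{p_\xi}$, which is a limit ordinal $<\lambda$.

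Below $\gamma$, end-extension forces $i(\alpha)^p := i(\alpha)^{p_\xi}$ and $C^p_{\alpha,i} := C^{p_\xi}_{\alpha,i}$ for any $\xi$ with $\alpha \leq \gamma^{p_\xi}$; this is well-defined, and clauses (1)--(5) at all pairs $(\alpha,\beta)$ with $\beta < \gamma$ are inherited directly from the $p_\xi$. The real content is therefore at level $\gamma$. Fix a strictly increasing cofinal $\langle \xi_\nu \mid \nu < \rho \rangle$ in $\eta$ with $\rho = \cf(\eta)$, put $\delta_\nu := \gamma^{p_{\xi_\nu}}$, and note that $\langle \delta_\nu \mid \nu < \rho \rangle$ is cofinal in $\gamma$. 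Clause (5) of each $p_{\xi_\nu}$ supplies, for every $\mu < \nu < \rho$, an index $i_{\mu,\nu} < \kappa$ with $\delta_\mu \in (C^{p_{\xi_\nu}}_{\delta_\nu, i_{\mu,\nu}})'$. Set
\[
  i^* := \sup\bigl(\{i(\delta_\nu)^p \mid \nu < \rho\} \cup \{i_{\mu,\nu} \mid \mu < \nu < \rho\}\bigr) < \kappa,
\]
using regularity of $\kappa$ and $\rho < \kappa$. Clauses (3) and (4) of the $p_{\xi_\nu}$ then give, for every $i \in [i^*,\kappa)$ and every $\mu < \nu < \rho$, both $\delta_\mu \in (C^p_{\delta_\nu, i})'$ and $C^p_{\delta_\nu, i} \cap \delta_\mu = C^p_{\delta_\mu, i}$.

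Set $i(\gamma)^p := i^*$ and, for $i \in [i^*,\kappa)$, define
\[
C^p_{\gamma,i} := \{\delta_\nu \mid \nu < \rho\} \cup \bigcup_{\nu<\rho} C^p_{\delta_\nu,i}.
\]
The coherence relations just derived force, for every $\nu^* < \rho$, the identity $C^p_{\gamma,i} \cap \delta_{\nu^*} = C^p_{\delta_{\nu^*},i}$: each $\delta_\nu$ with $\nu < \nu^*$ is a limit point, hence an element, of the closed set $C^p_{\delta_{\nu^*},i}$; for $\nu > \nu^*$ we have $C^p_{\delta_\nu, i} \cap \delta_{\nu^*} = C^p_{\delta_{\nu^*},i}$; and for $\nu < \nu^*$, $C^p_{\delta_\nu, i} = C^p_{\delta_{\nu^*}, i} \cap \delta_\nu \subseteq C^p_{\delta_{\nu^*}, i}$. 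From this single identity the remaining verifications are direct: $C^p_{\gamma,i}$ is a club in $\gamma$, clause (3) at level $\gamma$ transfers from the $\delta_\nu$ levels, clause (4) at any $\alpha \in (C^p_{\gamma,i})'$ reduces to clause (4) at the first $\delta_{\nu^*} > \alpha$, and the required instances of clause (5) for pairs $(\alpha,\gamma)$ reduce similarly to clause (5) at an appropriate $\delta_{\nu^*}$. The main difficulty, as expected, is precisely the construction of the uniform threshold $i^*$ and the simultaneous coherence of the new $C^p_{\gamma,i}$ with \emph{all} of the $C^p_{\alpha,i}$ for $\alpha < \gamma$; once $i^*$ is fixed, the rest is routine bookkeeping.
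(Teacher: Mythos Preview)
Your proof is correct and follows essentially the same approach as the paper: both reduce to $\kappa$-closure via the tree-like order, both define the lower bound by copying below $\gamma$ and constructing a uniform threshold $i^*$ as the supremum of the pairwise coherence indices, and both define $C_{\gamma,i}$ as the union of the top-level clubs of the sequence. Your version is slightly more explicit in a few places (passing to a cofinal subsequence of length $\cf(\eta)$, including the $i(\delta_\nu)$ in the supremum, and adding the $\delta_\nu$ to $C^p_{\gamma,i}$, all of which are harmless or redundant), but the argument is the same.
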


\begin{proof}
	Let $\bb{P} = \bb{P}(\lambda, \kappa)$. Since $(\bb{P}, \geq)$ is tree-like, it suffices to verify that $\bb{P}$ is $\kappa$-closed. To this end, let $\xi < \kappa$ 
  be a limit ordinal, and let $\vec{p} = \langle p_\eta \mid \eta < \xi \rangle$ be a strictly decreasing sequence from $\bb{P}$. 

	Let $\gamma = \sup(\{\gamma^{p_\eta} \mid \eta < \xi \})$. We will define $q$, a lower bound for $\vec{p}$, so that $q = \langle C^q_{\alpha, i} \mid \alpha \leq \gamma, i(\alpha)^q \leq i < \kappa \rangle$. For limit $\alpha < \gamma$, let $\eta <\xi$ be least such that $\alpha \leq \gamma^{p_\eta}$, let $i(\alpha)^q = i(\alpha)^{p_\eta}$ and, for $i(\alpha)^q \leq i < \kappa$, let $C^q_{\alpha, i} = C^{p_\eta}_{\alpha, i}$. It remains to define $i(\gamma)^q$ and $C^q_{\gamma, i}$ for $i(\gamma)^q \leq i < \kappa$.

	For $\eta < \zeta < \xi$, let $i(\eta, \zeta)$ be the least $i < \kappa$ such that $\gamma^{p_\eta} \in (C^{p_\zeta}_{\gamma^{p_\zeta}, i})'$. Let $i^* = \sup(\{i(\eta, \zeta) \mid \eta < \zeta < \xi \})$. Since $\xi < \kappa$ and $\kappa$ is regular, $i^* < \kappa$. Also, for all $i^* \leq i < \kappa$ and all $\eta < \zeta < \xi$, 
  we have $\gamma^{p_\eta} \in (C^{p_\zeta}_{\gamma^{p_\zeta}, i(\eta, \zeta)})' \subseteq (C^{p_\zeta}_{\gamma^{p_\zeta}, i})'$, 
  so $C^q_{\gamma^{p_\zeta}, i} \cap \gamma^{p_\eta} = C^q_{\gamma^{p_\eta}, i}$. Thus, letting $i(\gamma)^q = i^*$ and, for all $i^* \leq i < \kappa$, $C^q_{\gamma, i} = \bigcup_{\eta < \xi} C^q_{\gamma^{p_\eta}, i}$, it is easily verified that $q \in \bb{P}$ and is a lower bound for $\vec{p}$.
\end{proof}

\begin{proposition} \label{scprop}
	$\bb{P}(\lambda, \kappa)$ is $\lambda$-strategically closed.
\end{proposition}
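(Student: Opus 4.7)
My plan is to exhibit a winning strategy $\sigma$ for Player II in $G_\lambda(\bb{P}(\lambda, \kappa))$. At each of Player II's successor moves $\xi = \xi_0 + 2$ following Player I's play $p_{\xi-1}$ with top $\gamma' = \gamma^{p_{\xi-1}}$ and index $j = i(\gamma')^{p_{\xi-1}}$, I would have Player II extend $p_{\xi-1}$ by setting $\gamma^{p_\xi} = \gamma' + \omega$, $i(\gamma^{p_\xi})^{p_\xi} = 0$, and choosing the clubs $C^{p_\xi}_{\gamma^{p_\xi}, i}$ so that (a) the $0$-th club is a canonical ``spine'' built recursively to include all prior Player II tops together with an $\omega$-tail above $\gamma'$, and (b) for $i \geq j$, the $i$-th club additionally incorporates Player I's club $C^{p_{\xi-1}}_{\gamma', i}$, placing $\gamma'$ as a limit point (needed for condition (5) at $\gamma'$). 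At game limit stages $\xi$, Player II plays the natural lower bound, setting $\gamma^{p_\xi} = \sup_{\zeta < \xi} \gamma^{p_\zeta}$, $i(\gamma^{p_\xi})^{p_\xi} = 0$, and $C^{p_\xi}_{\gamma^{p_\xi}, i}$ equal to the union, over Player II's prior moves $\zeta < \xi$, of the corresponding canonical clubs.

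Verifying that $\sigma$ is winning reduces to checking that at every game stage, Player II's play is a valid condition. Conditions (1)--(3) of the definition of $\bb{P}(\lambda, \kappa)$ follow immediately from the construction. The key point enabling conditions (4) and (5) to propagate through limit stages of the game is that by fixing $i(\gamma^{p_\xi})^{p_\xi} = 0$ at every Player II move, the $0$-th clubs form a coherent tower: $C^{p_\xi}_{\gamma^{p_\xi}, 0} \cap \gamma^{p_\zeta} = C^{p_\zeta}_{\gamma^{p_\zeta}, 0}$ for any two Player II moves $\zeta < \xi$. Without this discipline, the indices $i(\gamma^{p_\zeta})^{p_\zeta}$ chosen by Player I could, over the course of $\kappa$-many stages, accumulate to $\kappa$, leaving no legal value below $\kappa$ for the top index at a limit stage. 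Condition (5) holds because every limit $\alpha$ appearing in $p_\xi$ was introduced in some prior move $p_\zeta$, where condition (5) was verified, and the witnessing index survives into $p_\xi$.

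The main obstacle is the verification at game limit stages $\xi$ whose cofinality exceeds $\kappa$, where the union $\bigcup_{\zeta} C^{p_\zeta}_{\gamma^{p_\zeta}, 0}$ must be shown to be a club in $\sup_\zeta \gamma^{p_\zeta}$ and to cohere with each constituent $C^{p_\zeta}_{\gamma^{p_\zeta}, 0}$ at Player II's previous tops. This verification hinges on the inductive structure of the canonical spine and on the fact that each Player II top has cofinality $\omega$, so that limit points of the union at heights below $\gamma^{p_\xi}$ are precisely the Player II tops $\gamma^{p_\zeta}$ at earlier game-limit stages; at each such height the invariant $C^{p_\xi}_{\gamma^{p_\xi}, 0} \cap \gamma^{p_\zeta} = C^{p_\zeta}_{\gamma^{p_\zeta}, 0}$ then reduces the coherence check to one already verified at the earlier stage.
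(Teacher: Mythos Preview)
Your overall strategy matches the paper's: have Player II always set $i(\gamma^{p_\xi})^{p_\xi} = 0$ and maintain the coherence invariant $C^{p_\xi}_{\gamma^{p_\xi}, 0} \cap \gamma^{p_\zeta} = C^{p_\zeta}_{\gamma^{p_\zeta}, 0}$ for earlier II moves $\zeta$, so that at limit game stages the union of the $0$-th clubs works. That invariant, which you correctly identify in your second paragraph, is exactly what the paper uses.

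There are, however, two genuine problems in the details. First, at a successor II stage $\xi = \xi_0 + 2$, the case split for defining $C^{p_\xi}_{\gamma^{p_\xi}, i}$ cannot be simply on whether $i \geq j = i(\gamma')^{p_{\xi-1}}$; it must be on whether $\gamma^{p_{\xi_0}} \in (C^{p_{\xi-1}}_{\gamma', i})'$. If, for $i \geq j$, you take the $i$-th club to incorporate $C^{p_{\xi-1}}_{\gamma', i}$ while the clubs for $i < j$ are built from the spine through $\gamma^{p_{\xi_0}}$, condition (3) can fail: for $i_1 < j \leq i_2$ your $i_1$-club contains $\gamma^{p_{\xi_0}}$ as a limit point, but Player I was under no obligation to put $\gamma^{p_{\xi_0}}$ into $C^{p_{\xi-1}}_{\gamma', i_2}$, so the $i_1$-club need not be contained in the $i_2$-club. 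The paper, for each $i$, uses $C^{p_{\xi-1}}_{\gamma', i}$ only when $\gamma^{p_{\xi_0}}$ is already one of its limit points, which forces the needed coherence with the spine.

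Second, your final paragraph misdiagnoses the limit case. Player II tops at \emph{limit} game stages need not have cofinality $\omega$, and the limit points of $\bigcup_\zeta C^{p_\zeta}_{\gamma^{p_\zeta}, 0}$ below $\gamma^{p_\xi}$ are not just the earlier II tops: they include all limit points of each constituent club. The verification of condition (4) at $\gamma^{p_\xi}$ goes through not via cofinality but directly from the coherence invariant: any limit point $\alpha$ of the union lies below some $\gamma^{p_\zeta}$, is then a limit point of $C^{p_\zeta}_{\gamma^{p_\zeta}, 0}$ by coherence, and condition (4) already holds in $p_\zeta$. Nothing special happens when $\cf(\xi) > \kappa$.
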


\begin{proof}
	Let $\bb{P} = \bb{P}(\lambda, \kappa)$. We describe a winning strategy for II in $G_\lambda(\bb{P})$. Suppose $0 < \xi < \lambda$ is an even ordinal and $\langle p_\eta \mid \eta < \xi \rangle$ is a partial play of $G_\lambda(\bb{P})$. Assume we have arranged inductively that, for all even $0 < \eta < \xi$, $i(\gamma^{p_\eta})^{p_\eta} = 0$ and, for all even $0 < \eta_0 < \eta$, $\eta_0 \in (C^{p_\eta}_{\gamma^{p_\eta}, 0})'$.

  Suppose first that $\xi = \eta + 2$ for some even $\eta < \lambda$. Let $\gamma = \gamma^{p_{\eta + 1}} + \omega$. We will define $p_\xi \leq p_{\eta + 1}$ so that $p_\xi = \langle C^{p_\xi}_{\alpha, i} \mid \alpha \leq \gamma, i(\alpha)^{p_\xi} \leq i < \kappa \rangle$. Since $p_\xi$ must be an end-extension of $p_{\eta + 1}$, we need only define $i(\gamma)^{p_\xi}$ and $C^{p_\xi}_{\gamma, i}$ for $i(\gamma)^{p_\xi} \leq i < \kappa$. As required to maintain the inductive hypothesis, we set $i(\gamma)^{p_\xi} = 0$. 
  We now define $C^{p_\xi}_{\gamma, i}$ for $i < \kappa$. The definition of $C^{p_\xi}_{\gamma, i}$ will depend upon 
  whether or not $\gamma^{p_\eta} \in (C^{p_{\eta + 1}}_{\gamma^{p_{\eta+1}}, i})'$. For $i < \kappa$ such that $\gamma^{p_\eta} \not\in (C^{p_{\eta + 1}}_{\gamma^{p_{\eta+1}}, i})'$, we let $C^{p_\xi}_{\gamma, i} = C^{p_\eta}_{\gamma^{p_\eta}, i} \cup \{\gamma^{p_\eta}\} \cup \{\gamma^{p_{\eta+1}} + n \mid n < \omega \}$. For $i < \kappa$ such that $\gamma^{p_\eta} \in (C^{p_{\eta + 1}}_{\gamma^{p_{\eta + 1}}, i})'$, we let $C^{p_\xi}_{\gamma, i} = C^{p_{\eta + 1}}_{\gamma^{p_{\eta + 1}}, i} \cup \{\gamma^{p_{\eta + 1}} + n \mid n < \omega \}$. It is easily verified that $p_\xi \leq p_{\eta + 1}$ and $i(\gamma^{p_\xi}) = 0$. Suppose that $\eta_0 < \xi$ is even. If $\eta_0 = \eta$, then, by construction, 
  $\eta_0 \in (C^{p_\xi}_{\gamma^{p_\xi}, 0})'$. If $\eta_0 < \eta$, then, by the inductive hypothesis applied to $\eta$, we have $\eta_0 \in 
  (C^{p_\eta}_{\gamma^{p_\eta}, 0})' \subseteq (C^{p_\xi}_{\gamma^{p_\xi}, 0})'$, so we have maintained the inductive hypothesis.

	Next, suppose that $\xi$ is a limit ordinal. Let $\gamma = \sup(\{\gamma^{p_\eta} \mid \eta < \xi \})$. We define $p_\xi$ to be a lower bound for $\langle p_\eta \mid \eta < \xi \rangle$ of the form $\langle C^{p_\xi}_{\alpha, i} \mid \alpha \leq \gamma, i(\alpha)^{p_\xi} \leq i < \kappa \rangle$. Again, we only have to specify $i(\gamma)^{p_\xi}$ and $C^{p_\xi}_{\gamma, i}$ for $i(\gamma)^{p_\xi} \leq i < \kappa$. Let $i(\gamma)^{p_\xi} = 0$ and, for all $i < \kappa$, let $C^{p_\xi}_{\gamma, i} = \bigcup \{C^{p_\eta}_{\gamma^{p_\eta}, i} \mid \eta < \xi, \eta$ even$\}$. It is again easily verified that $p_\xi$ is a lower bound for $\langle p_\eta \mid \eta < \xi \rangle$ and maintains the inductive hypothesis.
\end{proof}

\begin{corollary}
	Forcing with $\bb{P}(\lambda, \kappa)$ preserves all cardinalities and cofinalities $\leq \lambda$. If, in addition, $\lambda^{< \lambda} = \lambda$, then $|\bb{P}(\lambda, \kappa)| = \lambda$ and hence preserves all cardinalities and cofinalities.
\end{corollary}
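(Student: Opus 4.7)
The proof splits cleanly into the two halves of the statement, and both are routine consequences of the preservation results already established.

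For the first claim, the key observation is that a standard argument shows that if $\bb{P}$ is $\lambda$-strategically closed, then forcing with $\bb{P}$ adds no new sequences of ordinals of length less than $\lambda$: given any name $\dot{f}$ for such a sequence and any condition $p$, Player II can use her winning strategy to play against Player I, who plays conditions deciding successive values of $\dot{f}$, producing a descending sequence whose limit forces $\dot{f}$ to equal a sequence in $V$. Applying this to Proposition \ref{scprop}, no new bounded subsets of $\lambda$ are added, so all cardinalities and cofinalities $\leq \lambda$ are preserved.

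For the second claim, under the hypothesis $\lambda^{<\lambda} = \lambda$, I would count conditions directly. Each condition $p \in \bb{P}(\lambda, \kappa)$ is determined by $\gamma^p < \lambda$, the function $\alpha \mapsto i(\alpha)^p$ for limit $\alpha \leq \gamma^p$ (taking values in $\kappa < \lambda$), and, for each such $\alpha$ and each $i$ in the interval $[i(\alpha)^p, \kappa)$, the club $C^p_{\alpha, i} \subseteq \alpha$. Each such club is a bounded subset of $\lambda$, and the number of bounded subsets of $\lambda$ is $\lambda^{<\lambda} = \lambda$. Since the matrix has at most $\lambda \cdot \kappa = \lambda$ entries, $p$ is coded by an element of ${}^{<\lambda}\lambda$, of which there are $\lambda^{<\lambda} = \lambda$. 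Hence $|\bb{P}(\lambda, \kappa)| \leq \lambda$, and the reverse inequality is trivial.

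From $|\bb{P}(\lambda, \kappa)| = \lambda$ it follows that $\bb{P}(\lambda, \kappa)$ has the $\lambda^+$-c.c., so it preserves all cardinalities and cofinalities $\geq \lambda^+$. Combined with the first part of the corollary, this gives preservation of all cardinalities and cofinalities. There is no real obstacle here; the only mild subtlety is to record explicitly that $\lambda$-strategic closure suffices for the no-new-short-sequences property (as opposed to genuine $\lambda$-closure), which is the standard game-theoretic lemma.
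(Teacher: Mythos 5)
Your proof is correct and is exactly the standard argument the paper intends: the corollary is stated without proof, as an immediate consequence of Proposition~\ref{scprop} via the usual facts that $\lambda$-strategic closure implies no new ${<}\lambda$-sequences of ordinals, and that $\lambda^{<\lambda}=\lambda$ gives $|\bb{P}(\lambda,\kappa)|=\lambda$ and hence the $\lambda^+$-c.c. Your write-up fills in precisely these routine details (including the game-theoretic lemma and the counting of conditions), so there is nothing to flag.
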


A variation on the proof of Proposition \ref{scprop} yields the following.

\begin{proposition}
	Let $\alpha < \lambda$, and let $D_\alpha = \{p \in \bb{P}(\lambda, \kappa) \mid \alpha \leq \gamma^{p} \}$. Then $D_\alpha$ is dense in $\bb{P}(\lambda, \kappa)$.
\end{proposition}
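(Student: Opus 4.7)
The plan is to adapt the argument from the proof of Proposition \ref{scprop}, but applied starting from an arbitrary condition $p$ rather than the trivial one. Given $p \in \bb{P}(\lambda, \kappa)$ and $\alpha < \lambda$, I recursively construct a strictly decreasing sequence $\langle p_\beta \mid \beta \leq \alpha \rangle$ with $p_0 = p$ such that $\gamma^{p_\beta} \geq \beta$ for all $\beta \leq \alpha$. Setting $q = p_\alpha$ will then give $q \leq p$ with $\gamma^q \geq \alpha$, witnessing density of $D_\alpha$ at $p$.

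At successor stages, given $p_\beta$, I define $p_{\beta+1}$ exactly as in the successor step of the winning strategy in the proof of Proposition \ref{scprop}: set $\gamma^{p_{\beta+1}} = \gamma^{p_\beta} + \omega$, retain $i(\gamma^{p_{\beta+1}})^{p_{\beta+1}} = i(\gamma^{p_\beta})^{p_\beta}$, and, for each $i \geq i(\gamma^{p_\beta})^{p_\beta}$, set
\[ C^{p_{\beta+1}}_{\gamma^{p_{\beta+1}}, i} = C^{p_\beta}_{\gamma^{p_\beta}, i} \cup \{\gamma^{p_\beta}\} \cup \{\gamma^{p_\beta} + n \mid 0 < n < \omega\}. \]
Because $C^{p_\beta}_{\gamma^{p_\beta}, i}$ is a club in $\gamma^{p_\beta}$, this forces $\gamma^{p_\beta}$ to be a limit point of $C^{p_{\beta+1}}_{\gamma^{p_{\beta+1}}, i}$, which by induction secures the invariant that for all $\beta_0 < \beta_1 \leq \alpha$ and all $i \geq i(\gamma^p)^p$ we have $\gamma^{p_{\beta_0}} \in (C^{p_{\beta_1}}_{\gamma^{p_{\beta_1}}, i})'$ and $C^{p_{\beta_1}}_{\gamma^{p_{\beta_1}}, i} \cap \gamma^{p_{\beta_0}} = C^{p_{\beta_0}}_{\gamma^{p_{\beta_0}}, i}$.

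At a limit stage $\xi \leq \alpha$, I define $p_\xi$ by $\gamma^{p_\xi} = \sup\{\gamma^{p_\beta} \mid \beta < \xi\}$, $i(\gamma^{p_\xi})^{p_\xi} = i(\gamma^p)^p$, and $C^{p_\xi}_{\gamma^{p_\xi}, i} = \bigcup_{\beta < \xi} C^{p_\beta}_{\gamma^{p_\beta}, i}$ for $i \geq i(\gamma^p)^p$, mirroring the limit step of Proposition \ref{scprop}. The main point --- and the only place where genuine verification is needed --- is checking that each $C^{p_\xi}_{\gamma^{p_\xi}, i}$ is a club in $\gamma^{p_\xi}$ and that clauses (3) and (4) of the definition of $\bb{P}(\lambda, \kappa)$ are preserved. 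Both follow directly from the coherence invariant above, which arranges each $C^{p_{\beta_0}}_{\gamma^{p_{\beta_0}}, i}$ as the initial segment below $\gamma^{p_{\beta_0}}$ of $C^{p_{\beta_1}}_{\gamma^{p_{\beta_1}}, i}$ for any $\beta_0 < \beta_1 < \xi$. The monotonicity $\gamma^{p_\beta} \geq \beta$ then holds by a routine induction: $\gamma^{p_0} = \gamma^p \geq \omega$; the successor step gains $\omega$, hence at least $1$; and at limits we take the supremum, giving $\gamma^{p_\xi} \geq \sup_{\beta < \xi} \beta = \xi$.
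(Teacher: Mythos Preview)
Your proposal is correct and is precisely the variation on the proof of Proposition~\ref{scprop} that the paper has in mind (the paper does not spell out a proof, merely noting that such a variation works). The only differences from Proposition~\ref{scprop} are the natural ones: you start from an arbitrary condition $p$ rather than $\bbm{1}_{\bb{P}}$, so the invariant index is $i(\gamma^p)^p$ rather than $0$, and since there is no opponent you simplify the successor step to a single extension by $\omega$.
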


\begin{proposition} \label{nothreadprop}
	Let $G$ be $\bb{P}(\lambda, \kappa)$-generic over $V$. Let $\vec{\mathcal{C}} = \bigcup G = \langle C_{\alpha, i} \mid \alpha < \lambda, i(\alpha) \leq i < \kappa \rangle$. Then $\vec{\mathcal{C}}$ is a $\square^{\mathrm{ind}}(\lambda, \kappa)$-sequence.
\end{proposition}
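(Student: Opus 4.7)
The plan is to verify the six conditions of Definition \ref{ind_square_def} for $\vec{\mathcal{C}} = \bigcup G$. Conditions (1)--(5) follow almost immediately from the corresponding conditions in the definition of $\bb{P}(\lambda, \kappa)$, combined with the density of the sets $D_\alpha$ (ensuring that $\vec{\mathcal{C}}$ is defined on all of $\lambda$): for example, given $\alpha < \beta < \lambda$, one finds $p \in G$ with $\gamma^p \geq \beta$, and then condition (5) of $\bb{P}(\lambda, \kappa)$ applied to $p$ produces the required $i < \kappa$ witnessing $\alpha \in C'_{\beta, i}$.

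The heart of the proof is verifying condition (6), the no-thread condition. By Proposition \ref{threadprop}, it suffices to show, for each fixed $i^* < \kappa$, that there is no club $D \subseteq \lambda$ with $D \cap \alpha = C_{\alpha, i^*}$ for all $\alpha \in D'$. Suppose for contradiction that some $p^* \in \bb{P}(\lambda, \kappa)$ forces $\dot{D}$ to be such a thread with fixed index $i^*$.

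I would construct, by recursion, a decreasing sequence $\langle p_n \mid n < \omega \rangle$ below $p^*$ and ordinals $\beta_n$ with $\gamma^{p_n} < \beta_n < \gamma^{p_{n+1}}$ such that $p_{n+1} \Vdash ``\beta_n \in \dot{D}"$, using that $\dot{D}$ is forced to be unbounded in $\lambda$. The key step is then to produce a lower bound $p_\omega$ for this sequence such that $\gamma^{p_\omega} = \sup_n \gamma^{p_n}$, $i(\gamma^{p_\omega})^{p_\omega} = 0$, and, crucially, $C^{p_\omega}_{\gamma^{p_\omega}, i^*} = \{\gamma^{p_n} \mid n < \omega\}$; the latter is closed unbounded in $\gamma^{p_\omega}$ of order type $\omega$. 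Since every $\beta_n$ is forced to lie in $\dot{D}$ and $\{\beta_n \mid n < \omega\}$ is cofinal in $\gamma^{p_\omega}$, we have $p_\omega \Vdash ``\gamma^{p_\omega} \in \dot{D}'"$, and hence by the thread property $p_\omega \Vdash ``\dot{D} \cap \gamma^{p_\omega} = C_{\gamma^{p_\omega}, i^*}" = \{\gamma^{p_n} \mid n < \omega\}$. This contradicts $p_\omega \Vdash ``\beta_n \in \dot{D}"$, since $\beta_n \notin \{\gamma^{p_k} \mid k < \omega\}$ by the choice $\gamma^{p_n} < \beta_n < \gamma^{p_{n+1}}$.

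The main technical obstacle is confirming that $p_\omega$ as described is a legitimate condition in $\bb{P}(\lambda, \kappa)$. Condition (4) demands coherence of $C^{p_\omega}_{\gamma^{p_\omega}, i^*}$ with the earlier $C^{p_n}_{\gamma^{p_n}, i^*}$ at all limit points, and condition (5) requires that every limit ordinal $\alpha \leq \gamma^{p_\omega}$ be a limit point of $C^{p_\omega}_{\gamma^{p_\omega}, i}$ for some $i < \kappa$. The former is automatic because $\{\gamma^{p_n} \mid n < \omega\}$ has no limit points below $\gamma^{p_\omega}$; to handle the latter, I would define $C^{p_\omega}_{\gamma^{p_\omega}, i}$ for $i > i^*$ as sufficiently rich supersets of $\{\gamma^{p_n} \mid n < \omega\}$, absorbing cofinal subsequences drawn from the $C^{p_n}_{\gamma^{p_n}, j}$ for larger $j$, in the same style as the limit-stage construction in the proof of Proposition \ref{scprop}. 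The induction hypothesis needed throughout the recursive construction of the $p_n$'s is that $i(\gamma^{p_n})^{p_n} = 0$ and that the $C^{p_n}_{\gamma^{p_n}, i}$'s have been arranged to make these coherence and limit-point requirements meetable at stage $\omega$.
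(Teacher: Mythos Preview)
Your overall plan is correct, and the reduction via Proposition \ref{threadprop} followed by an $\omega$-length descending construction producing a bad lower bound is exactly the shape of the paper's argument. However, the paper uses a much simpler device at the final step, and your version creates technical obligations that, while dischargeable, are noticeably heavier than necessary.

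The paper plays a run of $G_\omega(\bb{P})$ in which Player II follows the winning strategy from Proposition \ref{scprop} (so that the even-stage conditions automatically satisfy $i(\gamma^{p_{2n}}) = 0$ and $\gamma^{p_{2m}} \in (C^{p_{2n}}_{\gamma^{p_{2n}}, 0})'$ for $m < n$) while Player I extends to find witnesses $\beta_n \in \dot{D}$. The key trick is that the lower bound $q$ is then defined with $i(\gamma)^q = i^* + 1$, not $0$. Since $C_{\gamma, i^*}$ is then simply undefined, one immediately gets $q \Vdash ``\dot{C}_{\gamma, i^*} = \emptyset"$, contradicting $q \Vdash ``\dot{D} \cap \gamma = \dot{C}_{\gamma, i^*}"$ (as $\beta_0 \in \dot{D} \cap \gamma$). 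The clubs $C^q_{\gamma, i}$ for $i > i^*$ are just the unions $\bigcup_n C^{p_{2n}}_{\gamma^{p_{2n}}, i}$, whose coherence is guaranteed for free by Player II's strategy.

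Your route---setting $i(\gamma^{p_\omega}) = 0$ and arranging $C^{p_\omega}_{\gamma^{p_\omega}, i^*} = \{\gamma^{p_n} \mid n < \omega\}$---can be completed, but it forces you to manage three things simultaneously during the recursion: coherence of the $C^{p_n}_{\gamma^{p_n}, i}$ for $i > i^*$ (so that their union is a legitimate club at the top), capture of every limit ordinal in the gaps $(\gamma^{p_n}, \gamma^{p_{n+1}})$ as a limit point at some index above $i^*$ (needed for condition (5) at $\gamma^{p_\omega}$), and thinness of the clubs at indices $\leq i^*$ so that condition (3) does not force $C^{p_\omega}_{\gamma^{p_\omega}, i^*}$ to be larger than $\{\gamma^{p_n}\}$. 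Your sketch of this is plausible but vague; the paper's choice of $i(\gamma)^q = i^*+1$ sidesteps all of it.
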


\begin{proof}
  Let $\bb{P} = \bb{P}(\lambda, \kappa)$. The fact that $\vec{\mathcal{C}}$ satisfies conditions (1)-(5) in Definition \ref{ind_square_def} 
  follows easily from the definition of $\bb{P}$. Thus, we only need to verify that $\vec{\mathcal{C}}$ satisfies condition (6) or, 
  alternatively, the condition identified in Proposition \ref{threadprop}. To this end, suppose for sake of contradiction that there 
  is a club $D \subseteq \lambda$ and an $i^* < \kappa$ such that, for all $\alpha \in D'$, $D \cap \alpha = C_{\alpha, i^*}$.

	For each $\alpha < \lambda$ and $i < \kappa$, let $\dot{C}_{\alpha, i}$ be a canonical name for $C_{\alpha,i}$ (where, if $p \in \bb{P}$ and $p$ decides the value of $i(\alpha)$, then $p \Vdash ``\dot{C}_{\alpha, i} = \emptyset"$ for all $i < i(\alpha)$). Find $p \in G$ and a $\bb{P}$-name $\dot{D}$ such that $p \Vdash ``\dot{D}$ is club in $\lambda$ and, for all $\alpha \in \dot{D}', \dot{D} \cap \alpha = \dot{C}_{\alpha, i^*}$". Working in $V$, we play a run of $G_\omega(\bb{P})$, $\langle p_n \mid n < \omega \rangle$ with II playing according to the winning strategy described in Proposition \ref{scprop} and I playing to ensure that the following hold:
	\begin{itemize}
		\item{$p_1 \leq p$;}
		\item{there is a strictly increasing sequence of ordinals $\langle \beta_n \mid n < \omega \rangle$ such that:
		\begin{itemize}
			\item{for all $n < \omega$, $\gamma^{p_{2n}} \leq \beta_n < \gamma^{p_{2n+1}}$ (where we assign $\gamma^{p_0} = 0$);}
			\item{for all $n < \omega$, $p_{2n+1} \Vdash ``\beta_n \in \dot{D}"$.}
		\end{itemize}}
	\end{itemize}

  Let $\gamma = \sup(\{\gamma^{p_n} \mid n < \omega \}) = \sup(\{\beta_n \mid n < \omega\})$. We will define $q \in \bb{P}$, a lower bound for $\langle p_n \mid n < \omega \rangle$, of the form $\langle C^q_{\alpha, i} \mid \alpha \leq \gamma, i(\alpha)^q \leq i < \kappa \rangle$. As usual, we only need to specify $i(\gamma)^q$ and $C^q_{\gamma, i}$ for all $i(\gamma)^q \leq i < \kappa$. Let $i(\gamma)^q = i^* + 1$. Note that, as II played according to the winning strategy described in Proposition \ref{scprop}, the following hold.
  \begin{itemize}
    \item For all $n < \omega$, $i(\gamma^{p_{2n}})^{p_{2n}} = 0$.
    \item For all $m < n < \omega$, $\gamma^{p_{2m}} \in (C^{p_{2n}}_{\gamma^{p_{2n}}, 0})'$.
  \end{itemize}
  Therefore, if, for all $i(\gamma)^q \leq i < \kappa$, we let $C^q_{\gamma, i} = \bigcup \{C^{p_{2n}}_{\gamma^{p_{2n}}, i} \mid n < \omega \}$, then it is easily verified that $q$ is a lower bound for $\langle p_n \mid n < \omega \rangle$. Since $q \leq p$, $q \Vdash ``\dot{D}$ is a club,$"$ so, as $q$ is a lower bound for $\langle p_n \mid n < \omega \rangle$, $q \Vdash ``\gamma \in \dot{D}'."$ Thus, again because $q \leq p$, $q \Vdash ``\dot{D} \cap \gamma = \dot{C}_{\gamma, i^*}."$ However, as $i(\gamma)^q = i^* + 1$, $q \Vdash ``\dot{C}_{\gamma, i^*} = \emptyset,"$ which is a contradiction.
\end{proof}

\section{Separating squares} \label{separatingSect}

In \cite{lh}, we proved that, if $\lambda$ is the successor of a regular cardinal and $\kappa < \nu < \lambda$ are regular cardinals, it is not necessarily the case that $\square^{\kappa}(\lambda) \Rightarrow \square^{\nu}(\lambda)$. In this section, we prove an analogous result when $\lambda$ is the successor of a singular cardinal. We consider the case $\lambda = \aleph_{\omega+1}$, but the same technique will work for other successors of singular cardinals. We will need the following, a proof of which can be found in \cite{lh}.

\begin{proposition} \label{genericity_prop}
	Let $\kappa < \lambda$ be infinite, regular cardinals, and let $\bb{Q} = \bb{Q}(\lambda, 1)$ be the forcing to add a $\square(\lambda)$-sequence. If $G$ is $\bb{Q}$-generic over $V$, then $\bigcup G$ is a $\square^\kappa(\lambda)$-sequence. 
\end{proposition}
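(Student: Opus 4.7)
The plan is a standard density argument. Let $\bb{Q} = \bb{Q}(\lambda, 1)$, and let $\vec{C} = \langle C_\alpha \mid \alpha < \lambda \rangle$ denote the generic $\square(\lambda)$-sequence added by $G$. To show $\vec{C}$ is a $\square^\kappa(\lambda)$-sequence, it suffices to establish that, for every $p \in \bb{Q}$ and every $\bb{Q}$-name $\dot{E}$ forced to be a club subset of $\lambda$, there are $q \leq p$ and $\alpha \in S^\lambda_\kappa$ with $\alpha \leq \gamma^q$, $q \Vdash \alpha \in \dot{E}$, and $\mathrm{otp}(C^q_\alpha) = \kappa$ (which is strictly less than $\alpha$, since $\alpha \in S^\lambda_\kappa$ forces $\alpha > \kappa$). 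By genericity, this density will give that the set $\{\alpha \in S^\lambda_\kappa \mid \mathrm{otp}(C_\alpha) < \alpha\}$ is stationary in $V[G]$, which is exactly the condition for $\square^\kappa(\lambda)$.

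To produce the condition $q$, I would recursively build a descending sequence $\langle p_\xi \mid \xi \leq \kappa \rangle$ of conditions below $p$, together with a strictly increasing sequence of ordinals $\langle \beta_\xi \mid \xi < \kappa \rangle$, arranged so that $\gamma^{p_{\xi+1}} = \beta_\xi$, $p_{\xi+1} \Vdash \beta_\xi \in \dot{E}$, and, at every limit stage $\xi \leq \kappa$, $\gamma^{p_\xi} = \sup_{\eta < \xi}\beta_\eta$ with $C^{p_\xi}_{\gamma^{p_\xi}} = \{\beta_\eta \mid \eta < \xi\}$. The successor step is routine: because $\dot{E}$ is forced to be unbounded, one can strengthen $p_\xi$ to decide some ordinal $\beta_\xi > \gamma^{p_\xi}$ into $\dot{E}$ and then extend so the new top of the condition is exactly $\beta_\xi$. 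At the end, set $q = p_\kappa$ and $\alpha = \gamma^{p_\kappa}$: then $\cf(\alpha) = \kappa$, so $\alpha \in S^\lambda_\kappa$ and $\alpha > \kappa$; $\mathrm{otp}(C^q_\alpha) = \kappa$; and since $q \leq p_{\xi+1}$ for every $\xi$, $q$ forces each $\beta_\xi$ into $\dot{E}$, hence forces $\alpha \in \dot{E}'$, hence $\alpha \in \dot{E}$ since $\dot{E}$ is forced closed.

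The main obstacle is checking that the proposed lower bounds $p_\xi$ at limit stages are in fact valid conditions of $\bb{Q}(\lambda, 1)$, i.e., that the $\square(\lambda)$-coherence at the top level is preserved. The choice $C^{p_\xi}_{\gamma^{p_\xi}} = \{\beta_\eta \mid \eta < \xi\}$ makes this automatic: the set of its limit points is exactly $\{\beta_\eta \mid \eta < \xi,\ \eta \text{ limit}\}$, and by the inductive hypothesis at stage $\eta$ we already have $C^{p_\eta}_{\beta_\eta} = \{\beta_\zeta \mid \zeta < \eta\}$, which is precisely $C^{p_\xi}_{\gamma^{p_\xi}} \cap \beta_\eta$, as coherence demands. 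Coherence at the lower levels is inherited because each $p_{\xi'}$ is an end-extension of its predecessors. Note that no appeal to closure of $\bb{Q}$ is needed to form these lower bounds: they are constructed explicitly, bypassing the fact that $\bb{Q}(\lambda,1)$ is only $\omega_1$-closed (and merely $\lambda$-strategically closed) in general.
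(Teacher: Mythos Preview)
The paper does not give its own proof of this proposition; it simply cites \cite{lh}. Your approach---building a decreasing $(\kappa+1)$-sequence of conditions below $p$, explicitly specifying the club at the new top at limit stages to control its order type, while interleaving steps that force elements into $\dot{E}$---is exactly the standard argument used there.

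There is one small slip in your limit-stage verification. You claim the limit points of $\{\beta_\eta \mid \eta < \xi\}$ are $\{\beta_\eta \mid \eta < \xi,\ \eta \text{ limit}\}$ and then invoke $C^{p_\eta}_{\beta_\eta} = \{\beta_\zeta \mid \zeta < \eta\}$; but your successor-step description gives $\beta_\eta = \gamma^{p_{\eta+1}} > \gamma^{p_\eta}$, whereas the actual limit points are $\sup_{\zeta < \eta}\beta_\zeta = \gamma^{p_\eta}$ for limit $\eta < \xi$. Taken literally, then, $\{\beta_\eta \mid \eta < \xi\}$ is not closed and cannot serve as $C^{p_\xi}_{\gamma^{p_\xi}}$. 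The fix is painless: either stipulate that at limit $\eta$ you set $\beta_\eta = \gamma^{p_\eta}$ and $p_{\eta+1} = p_\eta$ (closure of $\dot{E}$ already gives $p_\eta \Vdash \gamma^{p_\eta} \in \dot{E}$, so nothing is lost), or simply take $C^{p_\xi}_{\gamma^{p_\xi}} = \{\gamma^{p_\eta} \mid 0 < \eta < \xi\}$ instead. With either adjustment the coherence check goes through exactly as you outline, and the argument is complete.
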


\begin{theorem}
	Suppose $\kappa$ is a supercompact cardinal, GCH holds, and $n < \omega$. Let $\lambda = \kappa^{+\omega + 1}$. Then there is a forcing extension in which all cardinals $\leq \aleph_{n+1}$ are preserved, $\kappa = \aleph_{n+2}$, $\lambda = \aleph_{\omega + 1}$, $\square^{\aleph_n}(\lambda)$ holds, and $\square^{\aleph_{n+1}}(\lambda)$ fails.
\end{theorem}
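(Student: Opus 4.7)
My plan is to use a Laver-prepared supercompact $\kappa$, perform a collapse of $\kappa$ to $\aleph_{n+2}$ followed by the forcing to add a $\square(\lambda)$-sequence, and then use the supercompactness of $\kappa$ via lifted embeddings to kill $\square^{\aleph_{n+1}}(\lambda)$ while $\square^{\aleph_n}(\lambda)$ survives from the genericity of the added sequence. The lifting arguments will follow the pattern of the proofs of Theorems \ref{indestthm} and \ref{squarenspthm}.

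Concretely, after Laver-preparing $\kappa$ for indestructibility under $\kappa$-directed closed forcing, I would force with $\bb{P} = \mathrm{Coll}(\aleph_{n+1}, <\kappa) * \dot{\bb{Q}}(\lambda, 1)$. In the resulting model $V_1$, one has $\kappa = \aleph_{n+2}$, $\lambda = \aleph_{\omega+1}$, cardinals $\leq \aleph_{n+1}$ are preserved (the collapse is $\aleph_{n+1}$-closed and $\bb{Q}(\lambda, 1)$ is $\omega_1$-closed), and by Proposition \ref{genericity_prop} the generic $\square(\lambda)$-sequence $\vec{C}$ witnesses $\square^{\aleph_n}(\lambda)$.

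To show $\square^{\aleph_{n+1}}(\lambda)$ fails in $V_1$, I argue by contradiction. Given any $\square^{\aleph_{n+1}}(\lambda)$-sequence $\vec{D} \in V_1$, Proposition \ref{otprop} lets me replace it with one satisfying $\mathrm{otp}(D_\alpha) = \aleph_{n+1}$ and $\alpha \not\in D'_\beta$ for all $\beta > \alpha$, with $\alpha$ ranging over a stationary $S \subseteq S^\lambda_{\aleph_{n+1}}$. The Todor\v{c}evi\'{c} function $\rho_{\aleph_{n+1}}:[\lambda]^2 \to \aleph_{n+1}$ derived from $\vec{D}$ is then subadditive and unbounded by Proposition \ref{todorprop}, so Proposition \ref{subadditiveProp} yields a narrow $\lambda$-system $S^*$ of width $\aleph_{n+1}$ with no cofinal branch in $V_1$. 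To get a contradiction, I produce a cofinal branch through $S^*$: letting $j:V \to M$ witness that $\kappa$ is $\delta$-supercompact for $\delta$ suitably large, I use Proposition \ref{dist_prop_2}(3) together with Fact \ref{lift} to absorb $\mathrm{Coll}(\aleph_{n+1}, <\kappa) * \dot{\bb{Q}}(\lambda, 1) * \dot{\bb{T}}(\dot{\vec{C}})$ into $\mathrm{Coll}(\aleph_{n+1}, <j(\kappa))$, and then force an auxiliary thread $T$ through $\vec{C}$ and lift $j$ to $\hat{j}:V_1[T] \to M^*$ along the lines of Theorem \ref{indestthm}. The node $\xi = \min(\hat{j}(I) \setminus \sup(j``\lambda))$ in $\hat{j}(S^*)$ then yields a full set of $\aleph_{n+1}$-many branches through $S^*$ in $V_1[T]$, and a branch preservation lemma (Lemma \ref{preservationtheorem} or Lemma \ref{neemanlemma}, applied with $\theta = \aleph_{n+1}$) pulls this back to a cofinal branch in $V_1$, contradicting the choice of $\vec{D}$.

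The main obstacle is the final descent step: securing the hypotheses of a branch preservation lemma for the transition from $V_1$ to $V_1[T]$. Proposition \ref{dist_prop_2}(3) gives a dense $\lambda$-directed closed subset of $\bb{Q}(\lambda, 1) * \dot{\bb{T}}(\dot{\vec{C}})$ only when the number of threads in the product is $1$ (since $\bb{Q}(\lambda, 1) = \bb{Q}(\lambda, <2)$), so to apply Lemma \ref{neemanlemma} with $\theta = \aleph_{n+1}$ one needs to establish appropriate distributivity of a product of $\aleph_{n+2}$ copies of $\bb{T}(\vec{C})$ in $V_1$ by a different means, whereas to apply Lemma \ref{preservationtheorem} one needs to carve out an $\aleph_{n+2}$-closed dense subset of $\bb{T}(\vec{C})$ in $V_1$. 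This delicate point is where the specific choice of the square-adding forcing really matters, and is the technical heart of the argument.
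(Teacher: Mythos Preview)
Your outline has a genuine gap precisely where you flag it: the descent from $V_1[T*R*H^+]$ back to $V_1$ cannot be completed with the tools you invoke. For Lemma~\ref{preservationtheorem} you would need $\bb{T}*\dot{\bb{R}}*j(\dot{\bb{Q}})$ to be $\aleph_{n+2}$-closed in $V_1$, but $\bb{T}(\vec C)$ is not even countably closed there (it is only $\lambda$-distributive, via the two-step dense subset). For Lemma~\ref{neemanlemma} you would need the full-support product of $\aleph_{n+2}$ copies of the tail forcing to be $\aleph_{n+3}$-distributive, but Proposition~\ref{dist_prop_2}(3) applied to $\bb{Q}(\lambda,1)=\bb{Q}(\lambda,{<}2)$ only gives a dense closed subset for the product of a \emph{single} copy of $\bb{T}$; two mutually generic threads through a $\square(\lambda)$-sequence already collapse $\lambda$, so no such distributivity is available. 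You identify this as ``the technical heart'' but offer no mechanism to overcome it, and with $\bb{Q}(\lambda,1)$ there does not appear to be one.

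The paper takes a different route that avoids branch preservation entirely. After $\bb{Q}$, it runs a $\lambda^+$-length iteration $\bb{S}$ of club-shootings that kills every subset of $S^\lambda_{\aleph_{n+1}}$ which the threading forcing $\bb{T}$ would render nonstationary; the point is that $\bb{Q}*\dot{\bb{S}}*\dot{\bb{T}}$ has a dense $\lambda$-directed closed subset, so this can be arranged coherently. In the final model $V[H*I]$, given a putative $\square^{\aleph_{n+1}}(\lambda)$-sequence $\vec D$ with stationary witness $T$, one forces with $\bb{T}$ \emph{knowing that $T$ stays stationary}, lifts the supercompact embedding (exactly as you describe), and reads off a thread through $\vec D$ in the large extension. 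A short $\omega_1$-closure argument then pulls a thread back to $V[H*I*J]$, and the club of fixed points of that thread is disjoint from $T$, contradicting stationarity. The extra iteration $\bb{S}$ also forces the paper to work harder to preserve $\square^{\aleph_n}(\lambda)$: one must check that the stationary witness $S$ for the generic sequence lies in $I[\lambda]$ and hence survives the $\aleph_{n+1}$-closed forcing $\bb{S}$.
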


\begin{proof}
	The proof follows that of Theorem 3.15 in \cite{lh}. We thus omit many of the details and refer the reader to the earlier paper.

	Let the initial model be called $V_0$. In $V_0$, let $\bb{P} = \mathrm{Coll}(\aleph_{n+1}, < \kappa)$. Let $G$ be $\bb{P}$-generic 
  over $V_0$, and let $V = V_0[G]$. Work for now in $V$. Let $\bb{Q} = \bb{Q}(\lambda, 1)$, and let $\dot{\vec{C}}$ be the 
  canonical $\bb{Q}$-name for the $\square(\lambda)$-sequence added by $\bb{Q}$. Let $\dot{\bb{T}}$ be a $\bb{Q}$-name for $\bb{T}(\dot{\vec{C}})$.

  Working in $V^{\bb{Q}}$, we define a sequence of posets $\langle \bb{S}_\alpha \mid \alpha \leq \lambda^+ \rangle$ by 
  induction on $\alpha$. Each $\bb{S}_\alpha$ will be $\lambda$-distributive and so will preserve the cardinal structure 
  below $\lambda$. For each $\beta < \lambda^+$, we will fix an $\bb{S}_\beta$-name $\dot{X}_\beta$ for a subset of 
  $S^{\lambda}_{\aleph_{n+1}}$ such that $\Vdash_{\bb{S}_\beta * \dot{\bb{T}}} ``\dot{X}_\beta$ is non-stationary.$"$ 
  If $\alpha \leq \lambda^+$, then elements of $\bb{S}_\alpha$ are functions $s$ such that:
	\begin{enumerate}
		\item{$\mathrm{dom}(s) \subseteq \alpha$;}
		\item{$|s| \leq \kappa^{+\omega}$;}
		\item{for every $\beta \in \mathrm{dom}(s)$, $s(\beta)$ is a closed, bounded subset of $\lambda$;}
		\item{for every $\beta \in \mathrm{dom}(s)$, $s \restriction \beta \Vdash_{\bb{S}_\beta} ``s(\beta) \cap \dot{X}_\beta = \emptyset."$}
	\end{enumerate}
  If $s,t \in \bb{S}_\alpha$, then $t \leq s$ iff $\mathrm{dom}(s) \subseteq \mathrm{dom}(t)$ and, for every 
  $\beta \in \mathrm{dom}(s)$, $t(\beta)$ end-extends $s(\beta)$. Let $\bb{S} = \bb{S}_{\lambda^+}$. It is easily 
  seen that, for every $\alpha < \lambda^+$, $\Vdash_{\bb{S}} ``\dot{X}_\alpha$ is non-stationary." Moreover, 
  by GCH and a standard $\Delta$-system argument, $\bb{S}$ has the $\lambda^+$-chain condition. Therefore, every 
  canonical $\bb{S}$-name (resp. $\bb{S} * \dot{\bb{T}}$-name) for a subset of $\lambda$ is an $\bb{S}_\alpha$-name 
  (resp. $\bb{S}_\alpha * \dot{\bb{T}}$-name) for some $\alpha < \lambda^+$, so, if $\dot{X}$ is a canonical $\bb{S}$-name for a subset 
  of $\lambda$ and $\Vdash_{\bb{S}*\dot{\bb{T}}}``\dot{X}$ is non-stationary,$"$ then there is $\alpha < \lambda^+$ 
  such that $\dot{X}$ is an $\bb{S}_\alpha$-name and $\Vdash_{\bb{S}_\alpha * \dot{\bb{T}}}``\dot{X}$ is non-stationary.$"$
  An easy counting argument shows that there are only $\lambda^+$ canonical $\bb{S}$-names for subsets of $\lambda$, 
  so we can choose the sequence $\langle \dot{X}_\alpha \mid \alpha < \lambda^+ \rangle$ in such a way that, if 
  $\dot{X}$ is a canonical $\bb{S}$-name for a subset of $S^{\lambda^+}_{\aleph_{n+1}}$ and $\Vdash_{\bb{S}*\dot{\bb{T}}}``\dot{X}$ is 
  non-stationary,$"$ then there is $\alpha < \lambda^+$ such that $\dot{X} = \dot{X}_\alpha$. In particular, we may arrange so that, 
  in $V^{\bb{Q} * \dot{\bb{S}}}$, if $X \subseteq S^{\lambda^+}_{\aleph_{n+1}}$ and $\Vdash_{\bb{T}}``\check{X}$ is non-stationary$,"$ 
  then $X$ is already non-stationary in $V^{\bb{Q}*\dot{\bb{S}}}$.
    
	The following lemmas are proved as in \cite{lh}.
	\begin{lemma}
		$\bb{S}$ is $\aleph_{n+1}$-closed.
	\end{lemma}

	\begin{lemma}
		In $V$, $\bb{Q} * \dot{\bb{S}} * \dot{\bb{T}}$ has a dense $\lambda$-directed closed subset.
	\end{lemma}

	$V^{\bb{Q} * \dot{\bb{S}}}$ will be our desired model. Thus, let $H$ be $\bb{Q}$-generic over $V$, and let 
  $I$ be $\bb{S}$-generic over $V[H]$. It is clear that, in $V[H*I]$, $\kappa = \aleph_{n+2}$ and 
  $\lambda = \aleph_{\omega + 1}$. We first argue that $\square^{\aleph_n}(\lambda)$ holds. Since, by Proposition 
  \ref{genericity_prop}, forcing with 
  $\bb{Q}$ adds a $\square^{\aleph_n}(\lambda)$-sequence, we know that $\square^{\aleph_n}(\lambda)$ holds in 
  $V[H]$. By Proposition \ref{otprop}, we can fix a $\square(\lambda)$-sequence $\vec{C} = \langle C_\alpha 
  \mid \alpha < \lambda \rangle$ and a stationary set $S \subseteq S^\lambda_{\aleph_n}$ such 
  that, for every $\alpha \in S$, $\mathrm{otp}(C_\alpha) = \aleph_n$.

	\begin{claim}
		In $V[H]$, $S \in I[\lambda]$.
	\end{claim}

	\begin{proof}
    Let $\vec{a} = \langle a_\alpha \mid \alpha < \lambda \rangle$ be an enumeration of all bounded subsets of 
    $\lambda$. Suppose first that $n = 0$, and let 
    \[
      E = \{\gamma < \lambda \mid\mbox{ for all }a \in [\gamma]^{<\omega}, 
      \mbox{ there is }\alpha < \gamma\mbox{ such that }a = a_\alpha\}.
    \]
    $E$ is easily seen to be a club in $\lambda$. 
    If $\gamma \in E \cap S$, $A$ is any $\omega$-sequence cofinal in $\gamma$, and $\beta < \gamma$, then 
    $A \cap \beta \in [\gamma]^{<\omega}$, so there is $\alpha < \gamma$ such that $A \cap \beta = a_\alpha$. 
    Therefore, $A$ witnesses that $\gamma$ is approachable with respect to $\vec{a}$.
    
    Next, assume $n > 0$. Define a function $f:[\lambda]^2 \rightarrow \lambda$ by letting, for $\alpha < \beta < \lambda$, 
    $f(\alpha, \beta)$ be such that $C_\beta \cap \alpha = a_{f(\alpha, \beta)}$. Let $E$ be the set of closure points of $f$ 
    below $\lambda$. Then $E$ is a club in $\lambda$. We claim that, if $\gamma \in S \cap E$, 
    then $\gamma$ is approachable with respect to $\vec{a}$. To see this, fix such a $\gamma$. Let $\alpha < \gamma$, 
    and let $\beta = \min(C'_\gamma \setminus (\alpha+1))$. We have $\alpha < \beta < \gamma$ and 
    $C_\gamma \cap \alpha = C_\beta \cap \alpha = a_{f(\alpha, \beta)}$. Since $\gamma$ is a closure point of $f$, 
    we have $f(\alpha, \beta) < \gamma$, so $C_\gamma$ witnesses that $\gamma$ is approachable with respect to $\vec{a}$.
	\end{proof}

	Since $S \in I[\lambda]$ and $\bb{S}$ is $\aleph_{n+1}$-closed, Fact \ref{apstat} implies that $S$ remains 
  stationary in $V[H*I]$. We claim that $\vec{C}$ remains a $\square^{\aleph_n}(\lambda)$-sequence in $V[H*I]$. 
  The only slightly non-trivial condition to check is the requirement that there is no thread through $\vec{C}$. 
  Suppose for sake of contradiction that $D$ is such a thread. Since $S$ is stationary, we can find 
  $\alpha < \beta$, both in $D' \cap S$. Since $D$ is a thread, this means that $D \cap \alpha = C_\alpha$ and 
  $D \cap \beta = C_\beta$. Since $\alpha, \beta \in S$, we have $\mathrm{otp}(C_\alpha) = \mathrm{otp}(C_\beta) = \aleph_n$, 
  contradicting the fact that $\mathrm{otp}(D \cap \alpha) < \mathrm{otp}(D \cap \beta)$.

	We finally show that $\square^{\aleph_{n+1}}(\lambda)$ fails in $V[H*I]$. Suppose on the contrary that, 
  in $V[H*I]$, $\vec{D} = \langle D_\alpha \mid \alpha < \lambda \rangle$ is a $\square(\lambda)$-sequence and 
  $T \subseteq S^\lambda_{\aleph_{n+1}}$ is stationary such that, for all $\alpha \in T$, $\mathrm{otp}(D_\alpha) < \alpha$. 
  By our construction of $\bb{S}$, we can let $J$ be $\bb{T}$-generic over $V[H*I]$ such that $T$ remains stationary 
  in $V[H*I*J]$.

	In $V_0$, let $j:V_0 \rightarrow M_0$ witness that $\kappa$ is $\lambda^+$-supercompact. 
  In $V_0^{\bb{P}}$, $\bb{Q} * \dot{\bb{S}} * \dot{\bb{T}}$ has a dense $\lambda$-directed 
  closed subset of size $\lambda^+$, so, by Fact \ref{lift}, the identity embedding from $\bb{P}$ into 
  $j(\bb{P}) = \mathrm{Coll}(\aleph_{n+1}, <j(\kappa))$ can be extended to a complete 
  embedding from $\bb{P} * \dot{\bb{Q}} * \dot{\bb{S}} * \dot{\bb{T}}$ into $j(\bb{P})$ in such a way 
  that the quotient forcing is $\aleph_{n+1}$-closed. Thus, letting $\dot{\bb{R}}$ be a 
  $\bb{P} * \dot{\bb{Q}} * \dot{\bb{S}} * \dot{\bb{T}}$-name for this quotient forcing and letting 
  $K$ be $\bb{R}$-generic over $V[H*I*J]$, we can, in $V[H*I*J*K]$, extend $j$ to $j:V \rightarrow M_0[G*H*I*J*K]$, 
  and denote $M_0[G*H*I*J*K]$ by $M$.
  Exactly as was done in the proof of Theorem \ref{squarenspthm}, using the fact that $j \restriction \bb{P} * \dot{\bb{Q}} 
  * \dot{\bb{S}} * \dot{\bb{T}} \in M_0$ and the fact that, in $V$, $\bb{P} * \dot{\bb{Q}} * \dot{\bb{S}}$ has a 
  dense $\lambda$-directed closed subset, we can find a master condition $(q^*, \dot{s}^*, \dot{t}^*)$ in 
  $j(\bb{Q} * \dot{\bb{S}} * \dot{\bb{T}})$, such that, letting $H^+ * I^+$ be $j(\bb{Q} * \dot{\bb{S}})$-generic 
  over $V[H*I*J*K]$ with $(q^*, \dot{s}^*) \in H^+ * I^+$, we have $j``H*I \subseteq H^+ * I^+$. Then, in $V[H*I*J*K*H^+*I^+]$, we can extend $j$ 
  to $j:V[H*I] \rightarrow M[H^+ * I^+]$.

	Let $\eta = \sup(j``\lambda) < j(\lambda)$. Note that, in $V[H*I*J*K*H^+*I^+]$, $\cf(\eta) = \aleph_{n+1}$ and 
  $j``\lambda$ is $(<\kappa)$-club in $\eta$. Let $j(\vec{D}) = \vec{E} = \langle E_\alpha \mid \alpha < j(\lambda) 
  \rangle$. Let $F = \{\alpha \in S^{\lambda}_{<\kappa} \mid j(\alpha) \in E'_\eta \}$. $F$ is $(<\kappa)$-club in 
  $\lambda$. If $\alpha < \beta$ are both in $F$, then $j(\alpha), j(\beta) \in E'_\eta$, so $j(D_\alpha) = 
  E_{j(\alpha)} = E_\eta \cap j(\alpha)$ and $j(D_\beta) = E_{j(\beta)} = E_\eta \cap j(\beta)$, so $j(D_\alpha) = 
  j(D_\beta) \cap j(\alpha)$. By elementarity, $D_\alpha = D_\beta \cap \alpha$, so $F^* = \bigcup_{\alpha \in F} 
  D_\alpha$ is a thread through $\vec{D}$. $F^* \in V[H*I*J*K*H^+*I^+]$.

	\begin{claim}
    There is a thread through $\vec{D}$ in $V[H*I*J]$.
	\end{claim}
	\begin{proof}
    Suppose not. Note that, since $\bb{P} * \dot{\bb{Q}} * \dot{\bb{S}} * \dot{\bb{T}} * \dot{\bb{R}}$ has an 
    $\omega_1$-closed dense subset and ${^{\lambda^+}}M_0 \subseteq M_0$ in $V_0$, we certainly have ${^\omega}M \subseteq M$ 
    in $V[H*I*J*K]$. Therefore, since $j(\bb{Q} * \dot{\bb{S}})$ is $\omega_1$-closed in $M$, it is $\omega_1$-closed 
    in $V[H*I*J*K]$ as well. Therefore, in $V[H*I*J]$, $\bb{W} := \bb{R} * j(\bb{Q} * \dot{\bb{S}})$ is $\omega_1$-closed.
    
    Work in $V[H*I*J]$. Let $\dot{F}$ be a $\bb{W}$-name for $F^*$, and let $v \in \bb{W}$ force that $\dot{F}$ is a thread through $\vec{D}$. 

    \begin{subclaim}
      There are $w^0, w^1 \leq v$ and $\alpha < \lambda$ such that $w^0 \Vdash ``\check{\alpha} \in \dot{F}"$ and 
      $w^1 \Vdash ``\check{\alpha} \not\in \dot{F}."$
    \end{subclaim}

    \begin{proof}
      Suppose not, and let $Z = \{\alpha < \lambda \mid$ for some $w \leq v, w \Vdash ``\check{\alpha} \in \dot{F}"\}$. 
      Then $v \Vdash ``\dot{F} = \check{Z},"$ so, since $v \Vdash ``\dot{F}$ is a thread through $\vec{D},"$ it must 
      be the case that $Z$ is a thread through $\vec{D}$. However, $Z \in V[H*I*J]$, contradicting our assumptions.
    \end{proof}

    Fix $w^0, w^1 \leq v$ and $\alpha < \lambda$ as in the subclaim. Now, inductively define conditions $\langle w^i_m \mid i < 2, m < \omega \rangle$ 
    and ordinals $\langle \beta^i_m \mid i < 2, m < \omega \rangle$ such that:
		\begin{itemize}
			\item{for each $i < 2$, $\langle w^i_m \mid m < \omega \rangle$ is a decreasing sequence of conditions from $\bb{W}$ and $w^i_0 \leq w^i$;}
			\item{for all $m < \omega$, $\alpha < \beta^0_m < \beta^1_m < \beta^0_{m+1} < \lambda$;}
			\item{for all $i < 2$ and $m < \omega$, $w^i_m \Vdash ``\beta^i_m \in \dot{F}."$}
		\end{itemize}
		Let $\gamma = \sup(\{\beta^0_m \mid m < \omega\}) = \sup(\{\beta^1_m \mid m < \omega\})$ and, for $i < 2$, use the closure of $\bb{W}$ to find a 
    lower bound $w^i_\infty$ for $\langle w^i_m \mid m < \omega \rangle$. For each $i < 2$, $w^i_\infty \Vdash ``\gamma \in \dot{F}',"$ so 
    $w^i_\infty \Vdash ``\dot{F} \cap \gamma = C_\gamma."$ Then $w^0_\infty \leq w^0$ implies that $\alpha \in C_\gamma$, 
    while $w^1_\infty \leq w^1$ implies $\alpha \not\in C_\gamma$. Contradiction.
	\end{proof}

	Let $A$ be a thread through $\vec{D}$ in $V[H*I*J]$. Then $A^* = \{\alpha \in A \mid \mathrm{otp}(A \cap \alpha) = \alpha\}$ is club in $\lambda$, 
  since it is the set of closure points of the function $g:\lambda \rightarrow A$ defined by letting $g(\alpha)$ be 
  the unique $\beta \in A$ such that $\mathrm{otp}(A \cap \beta) = \alpha$. In addition, $A^* \cap T = \emptyset$, 
  contradicting the fact that $T$ is stationary in $V[H*I*J]$ and completing the proof.
\end{proof}

\section{Derived systems} \label{derivedSect}

One of the most useful properties of systems is that, when $\bb{P}$ is a sufficiently small forcing poset, 
a $\bb{P}$-name for a system gives rise to a system in the ground model. Let $\bb{P}$ be a forcing poset, 
let $\tau < \lambda$ be cardinals, with $\lambda$ regular and $|\bb{P}| < \lambda$, and suppose 
$\dot{S} = \langle \bigcup_{\alpha \in \dot{I}} \{\alpha\} \times \dot{\kappa}_\alpha, \{\dot{R}_i \mid i < \tau \} \rangle$ 
is a $\bb{P}$-name for a $\lambda$-system. Since every $\lambda$-system $\langle \bigcup_{\alpha \in I} \{\alpha\} \times \kappa_\alpha, 
\mathcal{R} \rangle$ is isomorphic to one in which $I = \lambda$, we may assume 
$\dot{I} = \check{\lambda}$. For each $\alpha < \lambda$, find $p_\alpha \in \bb{P}$ deciding the value of 
$\dot{\kappa}_\alpha$. Since $|\bb{P}| < \lambda$, there is a $p \in \bb{P}$ such that, for unboundedly many 
$\alpha < \lambda$, $p_\alpha = p$. Thus, by passing to a name for a subsystem and working below a condition in 
$\bb{P}$, we may assume $\dot{S}$ is of the form $\langle \bigcup_{\alpha < \lambda} \{\alpha\} \times \kappa_\alpha, 
\{\dot{R}_i \mid i < \tau \} \rangle$.  In $V$, we define the derived system $D_{\bb{P}}(\dot{S}) = 
\langle \bigcup_{\alpha < \lambda} \{\alpha\} \times \kappa_\alpha, \{R_{i,p} \mid i < \tau, p \in \bb{P} \} \rangle$ 
by letting, for every $\alpha_0 < \alpha_1 < \lambda$, $\beta_0 < \kappa_{\alpha_0}, \beta_1 < \kappa_{\alpha_1}$, 
$i < \tau$, and $p \in \bb{P}$, $(\alpha_0, \beta_0) <_{R_{i,p}} (\alpha_1, \beta_1)$ iff 
$p \Vdash ``(\alpha_0, \beta_0) <_{\dot{R}_i} (\alpha_1, \beta_1)."$ It is easily verified that $D_{\bb{P}}(\dot{S})$ 
is a $\lambda$-system and $\mathrm{width}(D_{\bb{P}}(\dot{S})) = \max(\{\sup(\{\kappa_\alpha \mid \alpha < \lambda\}), \tau, |\bb{P}|\})$.

\begin{proposition}
	Suppose $\bb{P}$ is a forcing poset, $\tau < \lambda$ are cardinals, with $\lambda$ regular, and 
  $\dot{S} = \langle \bigcup_{\alpha < \lambda} \{\alpha\} \times \kappa_\alpha, \{\dot{R}_i \mid i < \tau \} \rangle$ 
  is a $\bb{P}$-name for a $\lambda$-system. If $\Vdash_{\bb{P}}``\dot{S}$ has no cofinal branch,$"$ 
  then $D_{\bb{P}}(\dot{S})$ has no cofinal branch in $V$
\end{proposition}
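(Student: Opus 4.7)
The plan is to argue by contrapositive: assume that $D_{\bb{P}}(\dot{S})$ has a cofinal branch $b$ in $V$, and produce a cofinal branch through $\dot{S}$ in a suitable generic extension, contradicting the hypothesis that $\dot{S}$ is forced to have no cofinal branch. The whole argument is a straightforward unwinding of the definition of the derived system, so I do not expect any real obstacle; the essential observation is that the second index $p$ in the relation $R_{i,p}$ is precisely what tells us which condition to force below.

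The first step is to unpack what it means for $b$ to be a cofinal branch through $D_{\bb{P}}(\dot{S})$. By definition, there exist $i < \tau$ and $p \in \bb{P}$ such that any two elements of $b$ are $R_{i,p}$-comparable, and $b \cap (\{\alpha\} \times \kappa_\alpha) \neq \emptyset$ for unboundedly many $\alpha < \lambda$.

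The second step is to pass to a generic extension. Let $G$ be $\bb{P}$-generic over $V$ with $p \in G$. I will check that $b$, viewed as an element of $V[G]$, is a cofinal branch through $\dot{S}^{V[G]}$ via the relation $\dot{R}_i^{V[G]}$. For comparability, if $u, v \in b$ satisfy $u <_{R_{i,p}} v$, then by the definition of $R_{i,p}$ we have $p \Vdash ``u <_{\dot{R}_i} v"$, and since $p \in G$ this gives $u <_{\dot{R}_i^{V[G]}} v$ in $V[G]$; the cases $u = v$ and $v <_{R_{i,p}} u$ are handled identically. For cofinality, note that the levels $\{\alpha\} \times \kappa_\alpha$ of $\dot{S}^{V[G]}$ coincide with those of $D_{\bb{P}}(\dot{S})$, and being unbounded in $\lambda$ is absolute, so $b$ meets unboundedly many levels of $\dot{S}^{V[G]}$ in $V[G]$ as well.

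Finally, since $p \in G$ and $p \Vdash ``\dot{S}$ has no cofinal branch$"$ (as this is forced by every condition), the existence of such a branch in $V[G]$ is a contradiction, completing the proof. The only mildly non-trivial point worth flagging is that we must allow the indexing condition $p$ of $R_{i,p}$ to be arbitrary, rather than just the maximal condition, which is exactly why the derived system uses $\bb{P}$ many copies of each $\dot{R}_i$.
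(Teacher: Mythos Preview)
Your proof is correct and follows the same contrapositive approach as the paper: a cofinal branch $b$ through some $R_{i,p}$ in the derived system yields, via the definition of $R_{i,p}$, that $p \Vdash ``\check{b}$ is a cofinal branch through $\dot{R}_i$ in $\dot{S}"$. The paper states this in one line without passing explicitly to a generic extension, but your more detailed unwinding is equivalent.
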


\begin{proof}
  Suppose $i < \tau$, $p \in \bb{P}$, and $b$ is a cofinal branch through $R_{i,p}$ in $D_{\bb{P}}(\dot{S})$. 
  Then $p \Vdash ``\check{b}$ is a cofinal branch through $\dot{R}_i$ in $\dot{S}."$
\end{proof}

\begin{proposition}
	If $\mu \leq \lambda$, with $\lambda$ regular, then $NSP(\mu, \lambda)$ is indestructible under forcing with posets $\bb{P}$ such that $|\bb{P}| < \mu$ and $|\bb{P}|^+ < \lambda$.
\end{proposition}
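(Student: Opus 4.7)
The plan is to assume $NSP(\mu, \lambda)$ holds in $V$ and show that it persists to $V^{\bb{P}}$ for $\bb{P}$ satisfying the size hypotheses, by contradiction via the derived system construction introduced immediately above.

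Suppose, for contradiction, that $\dot{S}$ is a $\bb{P}$-name for a narrow $\lambda$-system of width $<\mu$ and height $\lambda$ with no cofinal branch. As in the discussion preceding the definition of the derived system, after passing to an isomorphic name and working below a suitable condition (which I may do since $|\bb{P}|<\lambda$, so $\bb{P}$ has the $\lambda$-chain condition and $\lambda$ remains regular in $V^{\bb{P}}$), I may assume $\dot{S}$ has the standard form $\langle \bigcup_{\alpha<\lambda}\{\alpha\}\times \kappa_\alpha, \{\dot{R}_i\mid i<\tau\}\rangle$ with the $\kappa_\alpha$ and $\tau$ in $V$. Let $w = \max(\sup\{\kappa_\alpha\mid \alpha<\lambda\},\tau)$, computed in $V$, so $w$ equals the width of $\dot{S}$ as forced. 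The hypothesis on $\dot{S}$ gives $w<\mu$, and the narrowness of $\dot{S}$ gives $w^+<\lambda$.

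Now form $D_{\bb{P}}(\dot{S})$ in $V$. Its width equals $\max(w,|\bb{P}|)$. Since $|\bb{P}|<\mu$ by hypothesis, the width remains $<\mu$. Since both $w^+<\lambda$ and $|\bb{P}|^+<\lambda$ (the latter being exactly our second hypothesis on $\bb{P}$), we also have $(\max(w,|\bb{P}|))^+=\max(w^+,|\bb{P}|^+)<\lambda$, so $D_{\bb{P}}(\dot{S})$ is a narrow $\lambda$-system of width $<\mu$ in $V$. By the preceding proposition, since $\Vdash_{\bb{P}}``\dot{S}$ has no cofinal branch,$"$ the derived system $D_{\bb{P}}(\dot{S})$ has no cofinal branch in $V$. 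This directly contradicts $NSP(\mu,\lambda)$ in $V$, completing the proof.

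There is no real obstacle here; the argument is essentially an accounting exercise. The only place to be slightly careful is verifying that the two size hypotheses $|\bb{P}|<\mu$ and $|\bb{P}|^+<\lambda$ are exactly what is needed to keep the derived system within the scope of $NSP(\mu,\lambda)$, namely narrow and of width $<\mu$. The two conditions play genuinely separate roles: the first controls the width, while the second controls narrowness (and is strictly stronger than $|\bb{P}|<\lambda$ precisely in the case where $\lambda$ is a successor cardinal, which is the main case of interest).
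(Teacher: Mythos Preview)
Your proof is correct and follows essentially the same approach as the paper: form the derived system from a name for a putative counterexample, check that the size hypotheses on $\bb{P}$ keep it narrow and of width $<\mu$, and invoke the preceding proposition to get a contradiction. The paper makes the passage ``below a suitable condition'' explicit by restricting to $\bb{Q}=\{q\in\bb{P}\mid q\leq p\}$, but your argument is otherwise identical.
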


\begin{proof}
	Suppose that $\mu \leq \lambda$, $NSP(\mu, \lambda)$ holds in $V$, $\bb{P}$ is a forcing poset, $|\bb{P}| < \mu$, and $|\bb{P}|^+ < \lambda$. Suppose for sake of contradiction that there is $p \in \bb{P}$ and a $\bb{P}$-name $\dot{S} = \langle \lambda \times \kappa, \{\dot{R}_i \mid i < \tau \} \rangle$ such that $\kappa, \tau < \mu$ and $p \Vdash_{\bb{P}} ``\dot{S}$ is a narrow $\lambda$-system with no cofinal branch." Let $\bb{Q} = \{q \in \bb{P} \mid q \leq p \}$. Re-interpreting $\dot{S}$ as a $\bb{Q}$-name, we obtain $\Vdash_{\bb{Q}}``\dot{S}$ is a narrow $\lambda$-system with no cofinal branch." Then, in $V$, $D_{\bb{Q}}(\dot{S})$ is a $\lambda$-system with no cofinal branch, $\mathrm{width}(D_{\bb{Q}}(\dot{S})) = \max(\{\kappa, \tau, |\bb{P}|\}) < \mu$, and $\mathrm{width}(D_{\bb{Q}}(\dot{S}))^+ < \lambda$, contradicting $NSP(\mu, \lambda)$. 
\end{proof}

\begin{proposition} \label{wide_system_prop}
	Let $\kappa$ be an infinite cardinal. There is a $\kappa^+$-system of width $\kappa$ with no cofinal branch.
\end{proposition}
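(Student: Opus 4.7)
The plan is to reduce the proof to Proposition \ref{subadditiveProp}: it suffices to produce, in ZFC, a subadditive unbounded function $d \colon [\kappa^+]^2 \to \kappa$, whereupon the proposition immediately supplies a $\kappa^+$-system $\langle \kappa^+ \times \kappa, \{R\}\rangle$ of width $\kappa$ with no cofinal branch.

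To construct $d$, I would invoke the theory of minimal walks on ordinals from \cite{todorvcevic}. In ZFC one can fix any $C$-sequence $\langle C_\alpha \mid \alpha < \kappa^+\rangle$ with $C_{\alpha + 1} = \{\alpha\}$ and, for limit $\alpha$, $C_\alpha \subseteq \alpha$ cofinal of order type $\cf(\alpha) \leq \kappa$ (always possible because every $\alpha < \kappa^+$ has cardinality at most $\kappa$). For $\alpha < \beta < \kappa^+$, form the minimal walk $\beta = \beta_0 > \beta_1 > \cdots > \beta_n = \alpha$ with $\beta_{i+1} = \min(C_{\beta_i} \setminus \alpha)$, and set
\[
d(\alpha,\beta) := \max_{i < n} \mathrm{otp}(C_{\beta_i} \cap \alpha).
\]
Each term is strictly less than $\mathrm{otp}(C_{\beta_i}) \leq \kappa$, so $d$ takes values in $\kappa$.

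Both required subadditivity inequalities follow by induction on walks, in direct parallel with the proof of Proposition \ref{todorprop}(1),(2) but with $d$ now derived from an arbitrary $C$-sequence rather than a $\square(\kappa^+)$-sequence. For the unboundedness of $d``[I]^2$ whenever $I \subseteq \kappa^+$ is unbounded, I would argue by contradiction in the spirit of Proposition \ref{todorprop}(3): if $d``[I]^2 \subseteq \xi$ for some $\xi < \kappa$, then for every pair $\alpha < \beta$ from $I$ the initial step weight $\mathrm{otp}(C_\beta \cap \alpha)$ is below $\xi$, placing every such $\alpha$ strictly below the $\xi$-th element of $C_\beta$. Choosing $\beta$ in the closure of $I$ of cofinality greater than $\xi$ (available since $\kappa^+$ is regular and $\xi < \kappa$) and pressing down via Fodor's lemma on the cofinal trace $I \cap \beta$ produces the required contradiction.

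The main obstacle is this unboundedness clause, since the genuine coherence of the $C$-sequence that drove the analogous argument for Proposition \ref{todorprop}(3) is no longer available in ZFC; one must instead use the full ZFC argument for $\rho_1$-style walks on an arbitrary $C$-sequence. In the special case $\kappa = \omega$, one may bypass walks entirely by taking any $\omega_1$-Aronszajn tree (which exists in ZFC), whose tree order is a single relation on $\omega_1 \times \omega$ with no cofinal branch.
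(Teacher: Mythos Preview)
Your approach via walks is genuinely different from the paper's and would work in principle, but the specific function you wrote down is wrong. The function $d(\alpha,\beta)=\max_{i<n}\mathrm{otp}(C_{\beta_i}\cap\alpha)$ is Todor\v{c}evi\'{c}'s $\rho_1$, and for an \emph{arbitrary} (non-coherent) $C$-sequence it fails both subadditivity inequalities. A concrete counterexample on $\omega_1$: take $C_{\omega\cdot 2}=\{0,\dots,9\}\cup\{\omega+10,\omega+11,\dots\}$, $C_{\omega\cdot 3}=\{\omega+1\}\cup\{\omega\cdot 2+n:n\geq 1\}$, $C_{\omega\cdot 4}=\{\omega\cdot 2\}\cup\{\omega\cdot 3+n:n\geq 1\}$, and the obvious $C_\delta$ elsewhere. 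With $\alpha=\omega$, $\beta=\omega\cdot 3$, $\gamma=\omega\cdot 4$ one computes $d(\alpha,\gamma)=10$ while $d(\alpha,\beta)=0$ and $d(\beta,\gamma)=1$, so condition (1) of Proposition~\ref{subadditiveProp} fails; a minor variant kills condition (2). The ``direct parallel with the proof of Proposition~\ref{todorprop}(1),(2)'' does not go through precisely because $\rho_\kappa$ there carries the extra terms $\rho_\kappa(\xi,\alpha)$ for $\xi\in C_\beta\cap[\Lambda_\kappa(\alpha,\beta),\alpha)$, and it is exactly these side-branch terms that make the induction close. Ironically, the obstacle you flagged (unboundedness) is \emph{not} the problem: an easy induction on $\beta$ shows $|\{\alpha<\beta:\rho_1(\alpha,\beta)\leq\xi\}|\leq\max(|\xi|,\aleph_0)<\kappa$, which suffices.

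Your sketch can be repaired by replacing $d$ with Todor\v{c}evi\'{c}'s full $\rho$ function (the one including the side-branch suprema), which is subadditive and unbounded for any $C$-sequence on $\kappa^+$ with $\mathrm{otp}(C_\alpha)\leq\kappa$; this is essentially Chapter~9 of \cite{todorvcevic}. The paper, by contrast, takes a completely different and much shorter route that exploits the derived-system machinery of Section~\ref{derivedSect}: force with $\mathrm{Coll}(\omega,\kappa)$ so that $\kappa^+$ becomes $\omega_1$, take any $\omega_1$-Aronszajn tree $\dot T$ there (a width-$\omega$ system with no cofinal branch), and pull back to $V$ via the derived system $D_{\bb P}(\dot T)$, which has width $|\bb P|=\kappa$ and no cofinal branch. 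Your final remark about the special case $\kappa=\omega$ is, in effect, the seed of the paper's argument for all $\kappa$.
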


\begin{proof}
	Let $\bb{P} = \mathrm{Coll}(\omega, \kappa)$. Then $|\bb{P}| = \kappa$ and, in $V^{\bb{P}}$, $\kappa^+ = \omega_1$. Thus, in $V^{\bb{P}}$, there is a $\kappa^+$-Aronszajn tree, $T$. We may assume that, for all $\alpha < \kappa^+$, level $\alpha$ of $T$ is the set $\{\alpha\} \times \omega$. $T$ can thus be thought of as a $\kappa^+$-system of width $\omega$ with one relation (the tree relation). Letting $\dot{T}$ be a $\bb{P}$-name for $T$, we may form the derived system $D_{\bb{P}}(\dot{T})$ in $V$. $D_{\bb{P}}(\dot{T})$ is a $\kappa^+$-system of width $|\bb{P}| = \kappa$. Since $\Vdash_{\bb{P}}``\dot{T}$ has no cofinal branch,$"$ $D_{\bb{P}}(\dot{T})$ has no cofinal branch in $V$.
\end{proof}

We now introduce another variation on Jensen's square principle.

\begin{definition}
	Suppose $\kappa \leq \mu$ are infinite cardinals, with $\kappa$ regular. $\langle C_\alpha \mid \alpha \in A \rangle$ is a $\square^{\geq \kappa}_\mu$-sequence if:
	\begin{enumerate}
		\item{$S^{\mu^+}_{\geq \kappa} \subseteq A \subseteq \mathrm{lim}(\mu^+)$;}
		\item{for all $\alpha \in A$, $C_\alpha$ is club in $\alpha$ and $\mathrm{otp}(C_\alpha) \leq \mu$;}
		\item{for all $\beta \in A$ and $\alpha < \beta$, if $\alpha \in C'_\beta$, then $\alpha \in A$ and $C_\beta \cap \alpha = C_\alpha$.}
	\end{enumerate}
	$\square^{\geq \kappa}_\mu$ is the assertion that a $\square^{\geq \kappa}_\mu$-sequence exists.
\end{definition}

Square principles of this sort were first studied by Baumgartner, in unpublished work. Let $\bb{B}(\mu, \kappa)$ be the forcing poset whose conditions are of the form $p = \langle C^p_\alpha \mid \alpha \in s^p \rangle$ such that:
\begin{itemize}
	\item{$s^p$ is a bounded subset of $\mu^+$ with a maximal element, $\gamma^p$.}
	\item{$\gamma^p \cap \mathrm{cof}(\geq \kappa) \subseteq s^p$.}
	\item{For all $\alpha \in s^p$, $C^p_\alpha$ is a club in $\alpha$ and $\mathrm{otp}(C_\alpha) \leq \mu$.}
	\item{For all $\beta \in s^p$ and $\alpha < \beta$, if $\alpha \in (C^p_\beta)'$, then $\alpha \in s^p$ and $C^p_\beta \cap \alpha = C^p_\alpha$.}
\end{itemize}
If $p,q \in \bb{B}(\mu, \kappa)$, then $q \leq p$ if $s^q$ end-extends $s^p$ and, for all $\alpha \in s^p$, $C^q_\alpha = C^p_\alpha$. 
The following is easily proven in the usual manner (see \cite[Section 2.2]{ac} for details).

\begin{proposition}
	Let $\kappa \leq \mu$ be infinite cardinals, with $\kappa$ regular.
	\begin{enumerate}
		\item{$\bb{B}(\mu, \kappa)$ is $\kappa$-directed closed.}
		\item{$\bb{B}(\mu, \kappa)$ is $\mu+1$-strategically closed.}
		\item{$\Vdash_{\bb{B}(\mu, \kappa)}``\square^{\geq \kappa}_\mu$ holds.$"$}
	\end{enumerate}
\end{proposition}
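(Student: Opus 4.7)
My plan is to address the three items in order, noting that item (3) will follow from (1) and (2) via standard density arguments, while items (1) and (2) carry the real combinatorial content. For (1), I would first observe that the order $(\bb{B}(\mu,\kappa),\geq)$ is tree-like: if $w \leq u, v$, then $s^u$ and $s^v$ are both initial segments of $s^w$ that agree with $w$ on themselves, so the one with smaller maximum end-extends the other. Accordingly it suffices to establish $\kappa$-closure for strictly decreasing chains $\langle p_\xi \mid \xi < \zeta \rangle$ with $\zeta < \kappa$. Setting $s^* = \bigcup_\xi s^{p_\xi}$ and $\gamma = \sup_\xi \gamma^{p_\xi}$, if $\zeta$ is a successor we are done; otherwise $\cf(\gamma) \leq \cf(\zeta) < \kappa$, so $\gamma \notin \mathrm{cof}(\geq\kappa)$ and the requirement $\gamma \cap \mathrm{cof}(\geq\kappa) \subseteq s^q$ is inherited from the chain. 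We are then free (and, to obtain a condition, obligated) to adjoin $\gamma$ as a new maximum together with a club $C^q_\gamma$ of order type at most $\cf(\zeta) < \kappa \leq \mu$.

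The main obstacle is the choice of $C^q_\gamma$: it must cohere with the $C^{p_\xi}$'s, in the sense that every limit point $\alpha$ of $C^q_\gamma$ lies in $s^*$ and satisfies $C^q_\gamma \cap \alpha = C^q_\alpha$. This is the familiar coherence challenge in Baumgartner-style square forcings; I would resolve it by fixing a cofinal sequence $\langle \xi_i \mid i < \cf(\zeta)\rangle$ in $\zeta$ and taking $C^q_\gamma$ to be the closure of $\bigcup_i C^{p_{\xi_i}}_{\gamma^{p_{\xi_i}}}$, exploiting the fact that the $p_\xi$'s already agree on overlapping $C$-values. For item (2) I would describe a winning strategy for Player II in $G_{\mu+1}(\bb{B}(\mu,\kappa))$ patterned on the proof of Proposition~\ref{scprop}: at even successor stages, II adjoins a small buffer (for instance $\gamma^{p_{\eta+1}}+\omega$) and arranges the new $C^{p_\xi}_{\gamma^{p_\xi}}$ so that the prior II-top $\gamma^{p_\eta}$ sits as a non-limit element, preserving the inductive invariant that makes the limit-stage lower-bound construction from (1) transparent.

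For item (3), given a generic $G$, define $A = \bigcup_{p \in G} s^p$ and $C_\alpha = C^p_\alpha$ for any $p \in G$ with $\alpha \in s^p$; by the coherence of the ordering this is well-defined. A short play of the game from (2) shows that $\{p : \gamma^p \geq \alpha\}$ is dense for every $\alpha < \mu^+$, so $A$ is unbounded in $\mu^+$, and combined with the clause $\gamma^p \cap \mathrm{cof}(\geq\kappa) \subseteq s^p$ this yields $S^{\mu^+}_{\geq\kappa} \subseteq A$. The remaining clauses defining a $\square^{\geq\kappa}_\mu$-sequence transcribe directly from the corresponding clauses in the definition of $\bb{B}(\mu,\kappa)$.
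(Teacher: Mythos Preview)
Your outline is broadly on the right track, but there are two concrete problems in the combinatorial content, one in each of items (1) and (2).

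\textbf{Item (1).} The construction of the club at the new top fails. For a strictly decreasing chain $\langle p_\xi \mid \xi < \zeta\rangle$ with $\zeta < \kappa$ limit and $\gamma = \sup_\xi \gamma^{p_\xi}$, the top clubs $C^{p_{\xi_i}}_{\gamma^{p_{\xi_i}}}$ have no coherence relation whatsoever with one another: end-extension only tells you that $C^{p_{\xi_j}}_{\gamma^{p_{\xi_i}}} = C^{p_{\xi_i}}_{\gamma^{p_{\xi_i}}}$ for $i < j$, not that $C^{p_{\xi_j}}_{\gamma^{p_{\xi_j}}} \cap \gamma^{p_{\xi_i}}$ has anything to do with $C^{p_{\xi_i}}_{\gamma^{p_{\xi_i}}}$. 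So the closure of $\bigcup_i C^{p_{\xi_i}}_{\gamma^{p_{\xi_i}}}$ will in general have limit points $\alpha$ for which $C^q_\gamma \cap \alpha \neq C^q_\alpha$, and its order type may exceed $\mu$. The phrase ``the $p_\xi$'s already agree on overlapping $C$-values'' is true but irrelevant here. The standard fix exploits precisely what you noticed but did not use: since $\cf(\gamma) < \kappa$, $\gamma$ need \emph{not} be in $s^q$. Set $\gamma^q = \gamma + \omega$, $s^q = s^* \cup \{\gamma+\omega\}$, and $C^q_{\gamma+\omega} = \{\gamma + n \mid 1 \leq n < \omega\}$; this club has no limit points, so coherence is vacuous. (You are not ``obligated'' to adjoin $\gamma$.)

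\textbf{Item (2).} The invariant is stated backwards. For the strategy patterned on Proposition~\ref{scprop} to succeed at limit stages, Player~II must arrange that the prior II-top $\gamma^{p_\eta}$ is a \emph{limit point} of the new $C^{p_\xi}_{\gamma^{p_\xi}}$ (and that $C^{p_\xi}_{\gamma^{p_\xi}} \cap \gamma^{p_\eta} = C^{p_\eta}_{\gamma^{p_\eta}}$), not a non-limit element. This is what allows the union $\bigcup_{\eta < \xi \text{ even}} C^{p_\eta}_{\gamma^{p_\eta}}$ to serve as the club at a limit stage $\xi$; with your ``non-limit'' invariant the union would fail coherence for the same reason as in (1).

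Item (3) is fine as written. The paper itself does not give a proof, deferring to \cite{ac}; the ``usual manner'' is what is sketched above.
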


\begin{proposition} \label{baumgartnersquareprop}
	Let $\kappa < \mu$ be infinite cardinals, and suppose $\square^{\geq \kappa^+}_\mu$ holds. Then there is a $\mu^+$-system of width $\kappa$ with no cofinal branch.
\end{proposition}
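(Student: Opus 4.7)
The plan is to apply Proposition \ref{subadditiveProp}, whose proof adapts immediately to subadditive, unbounded functions $d : [I]^2 \to \kappa$ with $I$ any unbounded subset of $\mu^+$, producing a $\mu^+$-system $\langle I \times \kappa, \{R\}\rangle$ of width $\kappa$ with no cofinal branch. Since $\kappa < \mu$ gives $\kappa^+ \leq \mu < \mu^+$, any such system is narrow. So it suffices to construct a subadditive, unbounded $d : [A]^2 \to \kappa$ from the given $\square^{\geq \kappa^+}_\mu$-sequence $\vec{C} = \langle C_\alpha \mid \alpha \in A \rangle$; here $A \supseteq S^{\mu^+}_{\geq \kappa^+}$ is stationary, hence unbounded in $\mu^+$.

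I would construct $d$ as a variant of Todor\v{c}evi\'{c}'s $\rho_\kappa$ from Section \ref{counterexampleSect}, adapted to the restricted domain of $\vec{C}$. The obstruction to a direct translation is that a naive $\vec{C}$-walk from $\beta \in A$ may leave $A$ at the first step, since elements of $C_\beta$ other than its limit points need not lie in $A$. I resolve this by walking through limit points of $C$-sets: given $\alpha < \beta$ in $A$, let $\beta_0 = \beta$, and while $\beta_n > \alpha$ let $\beta_{n+1} = \min\bigl((C'_{\beta_n} \cup \{\alpha\}) \setminus \alpha\bigr)$. Clause (3) of the $\square^{\geq \kappa^+}_\mu$ definition ensures $C'_{\beta_n} \subseteq A$, so the walk stays in $A$ and $C_{\beta_n}$ is always defined; since $C'_{\beta_n}$ is club in $\beta_n$, the sequence $\langle \beta_n \rangle$ is strictly decreasing and so terminates at $\alpha$ in finitely many steps.

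The value $d(\alpha, \beta)$ would then be defined by a recursion mirroring the definition of $\rho_\kappa$ in Section \ref{counterexampleSect}: take, along the walk from $\beta$ to $\alpha$, a supremum of $\mathrm{otp}(C_{\beta_n} \cap [\alpha, \beta_{n+1})) \bmod \kappa$ together with the recursive contributions $d(\beta_{n+1}, \beta_n)$ and $d(\alpha, \beta_{n+1})$. Properties (1) and (2) of Proposition \ref{subadditiveProp} follow by the usual coherence-of-walks arguments: coherence of $\vec{C}$ on $A$ guarantees that whenever $\alpha < \beta < \gamma$ are in $A$ and $\beta$ lies on the walk from $\gamma$ to $\alpha$, the upper portion of that walk coincides with the walk from $\gamma$ to $\beta$, and the lower portion coincides with the walk from $\beta$ to $\alpha$, so the recursive maxima obey the desired inequalities.

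The main obstacle, exactly as in the $\square(\lambda)$ case, is unboundedness. Suppose toward contradiction that $d``[I]^2 \subseteq i^*$ for some unbounded $I \subseteq A$ and some $i^* < \kappa$. As in the corresponding step of the proof of Proposition \ref{todorprop}, bounded $d$-values on $[I]^2$ force strong coherence among the $C_\alpha$ for $\alpha \in I$, yielding an unbounded $D \subseteq \mu^+$ that threads $\vec{C}$, in the sense that $D \cap \alpha = C_\alpha$ for every sufficiently large $\alpha \in D' \cap A$. Since $\mathrm{cf}(\mu^+) = \mu^+$, $\mathrm{otp}(D) = \mu^+$; and since $D'$ is club in $\mu^+$ while $S^{\mu^+}_{\geq \kappa^+} \subseteq A$ is stationary, some $\alpha \in D' \cap S^{\mu^+}_{\geq \kappa^+}$ satisfies $\mathrm{otp}(D \cap \alpha) > \mu$. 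For this $\alpha$, $\mathrm{otp}(C_\alpha) = \mathrm{otp}(D \cap \alpha) > \mu$, contradicting clause (2) of the definition of a $\square^{\geq \kappa^+}_\mu$-sequence.
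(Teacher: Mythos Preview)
Your approach is quite different from the paper's, and as written it has a real gap. The paper does not construct a subadditive function directly from the partial square sequence. Instead it forces with $\bb{P} = \mathrm{Coll}(\omega,\kappa)$; in $V^{\bb{P}}$ every limit ordinal outside $A$ has cofinality $\omega$, so the $\square^{\geq \kappa^+}_\mu$-sequence extends trivially to a full $\square_\mu$-sequence (hence a $\square^\omega(\mu^+)$-sequence), and the standard $\rho_\omega$ machinery (Propositions \ref{otprop} and \ref{todorprop}) yields a subadditive unbounded $d:[\mu^+]^2 \to \omega$ and thus a $\mu^+$-system of width $\omega$ with no cofinal branch in $V^{\bb{P}}$. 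Taking the derived system $D_{\bb{P}}(\dot{S})$ pulls this back to a branchless $\mu^+$-system of width $|\bb{P}| = \kappa$ in $V$. The collapse trick is exactly what converts the partial domain into a full one without having to re-engineer the walks.

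Your direct construction runs into two concrete problems. First, your definition of $d$ is not actually given: the phrase ``$\mathrm{otp}(C_{\beta_n} \cap [\alpha,\beta_{n+1})) \bmod \kappa$'' is not how $\rho_\kappa$ controls its range---in Todor\v{c}evi\'{c}'s definition the bound by $\kappa$ comes from $\Lambda_\kappa$, not from reducing modulo $\kappa$, and taking residues would almost certainly destroy subadditivity. Second, and more seriously, your unboundedness argument appeals to ``the corresponding step of the proof of Proposition \ref{todorprop},'' but that step does \emph{not} produce a thread from bounded values of $\rho_\kappa$; it uses the stationary set of points of order type $\kappa$ from Proposition \ref{otprop}(2), a hypothesis you do not have for a $\square^{\geq \kappa^+}_\mu$-sequence. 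So the implication ``bounded $d$-values $\Rightarrow$ thread through $\vec{C}$'' is asserted without proof, and it is precisely the nontrivial point. The paper's forcing-and-derived-system argument avoids both issues entirely.
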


\begin{proof}
	Let $\bb{P} = \mathrm{Coll}(\omega, \kappa)$.

	\begin{claim}
		In $V^{\bb{P}}$, $\square_\mu$ holds.
	\end{claim}

	\begin{proof}
		In $V$, let $\vec{C} = \langle C_\alpha \mid \alpha \in A \rangle$ be a $\square^{\geq \kappa^+}_\mu$-sequence. 
    $S^{\mu^+}_{\geq \kappa^+} \subseteq A$, so, in $V^{\bb{P}}$, if $\alpha \in \mathrm{lim}(\mu^+) \setminus A$, 
    then $\cf(\alpha) = \omega$. Thus, in $V^{\bb{P}}$, we can define a $\square_\mu$-sequence $\vec{D} = \langle 
    D_\alpha \mid \alpha < \mu^+ \rangle$ by letting $D_\alpha = C_\alpha$ for all $\alpha \in A$ and, for all 
    $\alpha \in \mathrm{lim}(\mu^+) \setminus A$, letting $D_\alpha$ be an arbitrary $\omega$-sequence cofinal in $\alpha$.
	\end{proof}

	A $\square_\mu$-sequence is easily seen to be a $\square^\omega(\mu^+)$-sequence. Therefore, by Propositions 
  \ref{otprop} and \ref{todorprop}, there is a subadditive, unbounded function $d:[\mu^+]^2 \rightarrow \omega$ 
  in $V^{\bb{P}}$. Thus, by Proposition \ref{subadditiveProp}, there is, in $V^{\bb{P}}$, a $\mu^+$-system 
  $S$ of width $\omega$ with no cofinal branch. Let $\dot{S}$ be a name for such an $S$. Then the derived 
  system $D_{\bb{P}}(\dot{S})$ is, in $V$, a $\mu^+$-system of width $\kappa$ with no cofinal branch.
\end{proof}

We can use this to show that, for example, the narrow system property at $\aleph_{\omega + 1}$ can hold for narrow systems of arbitrarily high width below $\aleph_\omega$ while failing in general.

\begin{corollary}
	Suppose $\lambda$ is a supercompact cardinal and $n < \omega$. Then there is a forcing extension in which every $\aleph_{\omega+1}$-system of width $\leq \aleph_n$ has a cofinal branch but there is an $\aleph_{\omega + 1}$-system of width $\aleph_{n+1}$ with no cofinal branch.
\end{corollary}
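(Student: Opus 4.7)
The plan is to combine Theorem~\ref{indestthm} with the forcing $\bb{B}(\aleph_\omega, \aleph_{n+2})$: the former gives indestructible $NSP(\aleph_{n+1}, \geq \aleph_{n+2})$, while the latter adds a $\square^{\geq \aleph_{n+2}}_{\aleph_\omega}$-sequence, which by Proposition~\ref{baumgartnersquareprop} produces the desired branchless system of width $\aleph_{n+1}$. First I would force with $\bb{P} = \mathrm{Coll}(\aleph_{n+1}, <\lambda)$; by Theorem~\ref{indestthm} (applied with $\mu = \aleph_{n+1}$), in $V^{\bb{P}}$ the cardinal $\lambda$ becomes $\aleph_{n+2}$ and $NSP(\aleph_{n+1}, \geq \lambda)$ holds and is indestructible under $\lambda$-directed closed set forcing.

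Next, in $V^{\bb{P}}$ I would force with $\bb{B} = \bb{B}(\aleph_\omega, \aleph_{n+2})$. By the proposition on $\bb{B}(\mu, \kappa)$, this poset is $\aleph_{n+2}$-directed closed (hence $\lambda$-directed closed) and $(\aleph_\omega+1)$-strategically closed, and it forces the existence of a $\square^{\geq \aleph_{n+2}}_{\aleph_\omega}$-sequence. Its strategic closure yields $\aleph_\omega$-distributivity and thus preserves $\aleph_{\omega+1}$, so together with the directed closure all cardinals are preserved. By the indestructibility clause of Theorem~\ref{indestthm}, $NSP(\aleph_{n+1}, \geq \aleph_{n+2})$ continues to hold in $V^{\bb{P} * \dot{\bb{B}}}$, so in particular every narrow $\aleph_{\omega+1}$-system of width $<\aleph_{n+1}$ (equivalently, of width $\leq \aleph_n$) has a cofinal branch. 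On the other hand, Proposition~\ref{baumgartnersquareprop} applied with $\kappa = \aleph_{n+1}$ and $\mu = \aleph_\omega$ produces, in the same model, an $\aleph_{\omega+1}$-system of width $\aleph_{n+1}$ with no cofinal branch.

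The main point requiring care is the alignment of parameters: $\lambda$ must be collapsed to exactly $\aleph_{n+2}$, which is both the level of directed closure provided by $\bb{B}(\aleph_\omega, \aleph_{n+2})$ and the threshold $\kappa^+$ appearing in the hypothesis of Proposition~\ref{baumgartnersquareprop} when $\kappa = \aleph_{n+1}$. Beyond this alignment, no serious obstacle appears: the preservation of $\aleph_{\omega+1}$ by $\bb{B}$ follows automatically from its strategic closure, so no separate chain-condition analysis is needed, and the two halves of the conclusion follow immediately from the tools established earlier in the paper.
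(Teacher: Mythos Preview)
Your proposal is correct and follows essentially the same route as the paper: the paper forces with $\mathrm{Coll}(\aleph_{n+1}, < \lambda)$ and then with $\bb{B}(\lambda^{+\omega}, \lambda)$, which in the extension is exactly your $\bb{B}(\aleph_\omega, \aleph_{n+2})$, and then invokes Theorem~\ref{indestthm} and Proposition~\ref{baumgartnersquareprop} just as you do. The only minor overstatement is your claim that ``all cardinals are preserved'' by $\bb{B}$; what is actually needed (and what the strategic closure gives) is preservation of cardinals up to and including $\aleph_{\omega+1}$, which suffices for the argument.
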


\begin{proof}
	Let $\bb{P} = \mathrm{Coll}(\aleph_{n+1}, < \lambda)$, and let $G$ be $\bb{P}$-generic over $V$. 
  In $V[G]$, let $\bb{Q} = \bb{B}(\lambda^{+\omega}, \lambda)$, and let $H$ be $\bb{Q}$-generic over 
  $V[G]$. Note that, in $V[G*H]$, $\lambda = \aleph_{n+2}$ and $\lambda^{+\omega+1} = \aleph_{\omega + 1}$. 
  Since, in $V[G]$, $\bb{Q}$ is $\lambda$-directed closed, Theorem \ref{indestthm} implies that, in 
  $V[G*H]$, $NSP(\aleph_{n+1}, \geq \aleph_{n+2})$ holds. In particular, every $\aleph_{\omega+1}$-system 
  of width $\leq \aleph_n$ has a cofinal branch. On the other hand, $\square^{\geq \aleph_{n+2}}_{\aleph_\omega}$ 
  holds in $V[G*H]$, so, by Proposition \ref{baumgartnersquareprop}, there is an $\aleph_{\omega+1}$-system of 
  width $\aleph_{n+1}$ with no cofinal branch.
\end{proof}

\section{The Proper Forcing Axiom and narrow systems} \label{pfaSect}

In this section, we investigate the extent to which the Proper Forcing Axiom (PFA) influences narrow systems. We first recall the notion of a guessing model, introduced by Viale and Weiss in \cite{vw}.

\begin{definition} \label{guessing_def}
	Let $\theta$ be a regular cardinal, and let $M \prec H(\theta)$.
	\begin{enumerate}
		\item{Suppose $X \in M$ and $d \subseteq X$.
		\begin{enumerate}
			\item{$d$ is \emph{$M$-approximated} if, for every countable $z \in M$, $z \cap d \in M$.}
			\item{$d$ is \emph{$M$-guessed} if there is $e \in M$ such that $e \cap M = d \cap M$.}
		\end{enumerate}}
		\item{$M$ is a \emph{guessing model} if every $M$-approximated $d$ is $M$-guessed.}
		\item{If $\kappa \leq \theta$, then $\mathcal{G}_\kappa (H(\theta)) = \{M \prec H(\theta) \mid |M| < \kappa$ and $M$ is a guessing model$\}.$}
	\end{enumerate}
\end{definition}

The following is proven in \cite{vw}.

\begin{theorem} \label{vwthm}
	Suppose PFA holds. Then $\mathcal{G}_{\omega_2}(H(\theta))$ is stationary in $\mathcal{P}_{\omega_2}(H(\theta))$ for every regular $\theta \geq \omega_2$.
\end{theorem}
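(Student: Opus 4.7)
Stationarity of $\mathcal{G}_{\omega_2}(H(\theta))$ in $\mathcal{P}_{\omega_2}(H(\theta))$ amounts to showing that, for every function $F\colon [H(\theta)]^{<\omega}\to H(\theta)$ and every $X\in H(\theta)$, there exists $M\in\mathcal{G}_{\omega_2}(H(\theta))$ closed under $F$ with $X\in M$. The plan is to build such an $M$, of cardinality $\aleph_1$, as the union of a PFA-generic continuous $\in$-chain of countable elementary submodels of a sufficiently large structure.

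Fix a regular $\chi$ with $H(\theta),F,X\in H(\chi)$ and a well-ordering of $H(\chi)$. Let $\bb{P}$ be Neeman's pure side-condition poset whose conditions are finite $\in$-increasing chains of countable elementary submodels of the expanded structure $(H(\chi),\in,F,X,\text{well-order})$, ordered by reverse end-extension. Properness of $\bb{P}$ is verified by the standard argument: for any countable $N^*\prec H(\chi^+)$ with $\bb{P},F,X\in N^*$, every condition whose top model is $N^*\cap H(\chi)$ is $(N^*,\bb{P})$-generic. For each $\alpha<\omega_1$ the set $D_\alpha=\{p\in\bb{P}:\alpha\in\max(p)\}$ is dense, and these $\aleph_1$ dense sets suffice to ensure that any sufficiently generic filter $G$ yields a continuous chain $\langle N_\xi:\xi<\omega_1\rangle$ whose union $N=\bigcup_\xi N_\xi$ is an elementary submodel of $H(\chi)$ of size $\aleph_1$ containing $F$ and $X$ with $\omega_1\subseteq N$. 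Setting $M=N\cap H(\theta)$ yields a size-$\aleph_1$ submodel of $H(\theta)$ closed under $F$ and containing $X$.

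The substantive task is to verify $M$ is guessing. Suppose $d\subseteq Y$ with $Y\in M$ is $M$-approximated; then $d\cap N_\xi\in M$ for every $\xi<\omega_1$. The strategy is to augment the PFA application with additional dense sets that, for each such potential $d$, force the generic chain either to produce a $\bb{P}$-name for a guess $e$ with $e\cap M=d\cap M$, or to refute the approximation at some level. Because each $d\cap N_\xi$ lies in the countable model $N_\xi\in M$, the relevant candidate data is captured by a single branch through a tree of height $\omega_1$ and countable levels definable from $M$, so only $\aleph_1$ dense sets are required. A pressing-down argument along the generic chain then extracts an $e\in M$ with $e\cap N_\xi=d\cap N_\xi$ for stationarily many $\xi$, yielding $e\cap M=d\cap M$ as required.

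\textbf{Main obstacle.} The principal difficulty is that the potential counterexample $d$ need not itself lie in $M$, so the requisite $\aleph_1$ dense sets cannot be indexed straightforwardly by candidate $d$'s in the ground model. The standard resolution is to reformulate the failure of guessing at $M$ as the existence of a cofinal branch through a slender tree on $\omega_1$, thereby reducing the statement to the Ineffable Slender Tree Property $\mathrm{ISP}(\omega_2)$. Weiss's argument that PFA implies $\mathrm{ISP}(\omega_2)$ via the above side-condition forcing is the real technical heart of the theorem and is the step I expect to be most delicate.
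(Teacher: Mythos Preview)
The paper does not prove this theorem; it is quoted from Viale and Weiss \cite{vw} and used as a black box in the proof of Theorem~\ref{pfathm}. There is therefore no ``paper's own proof'' to compare your proposal against.

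That said, your sketch is broadly in the spirit of the Viale--Weiss argument, so a few comments are in order. The outline is honest about its own incompleteness: you correctly identify that the crux is handling an $M$-approximated $d$ that is not itself available when the $\aleph_1$ dense sets are chosen, and you gesture toward resolving this via $\mathrm{ISP}(\omega_2)$ and a slender-tree reformulation. This is indeed how Weiss and Viale--Weiss proceed, but what you have written is a plan rather than a proof. In particular, the passage ``augment the PFA application with additional dense sets that, for each such potential $d$, force the generic chain either to produce a $\bb{P}$-name for a guess $e$ \ldots'' is precisely the step that cannot be carried out na\"ively, since there are too many potential $d$'s; the actual argument requires either the $(\theta,\lambda)$-slender list machinery or the two-step reduction through $\mathrm{ISP}$, neither of which you have supplied. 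Also, the forcing used in \cite{vw} is not literally Neeman's pure side-condition poset (which came later), though the posets are closely related and either can be made to work.

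If your intent was merely to indicate how the cited result is obtained, the sketch is adequate as a pointer; if it was meant to be a self-contained proof, the central technical step is still missing.
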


We use this to prove the following result.

\begin{theorem} \label{pfathm}
	Suppose PFA holds. Then $NSP(\omega_1, \geq \omega_2)$ holds.
\end{theorem}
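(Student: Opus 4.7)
My plan is to apply the guessing-model technology of Viale--Weiss (Theorem~\ref{vwthm}). Fix a regular $\lambda \geq \omega_2$ and a narrow $\lambda$-system $S = \langle I \times \kappa, \mathcal{R}\rangle$ with $\kappa < \omega_1$; by Remark~\ref{singlerelationremark} I may assume $\mathcal{R} = \{R\}$, and then $\kappa$ is countable. I pick a regular $\theta$ with $S \in H(\theta)$ and, using PFA, select a guessing model $M \in \mathcal{G}_{\omega_2}(H(\theta))$ with $S \in M$ and $\omega_1 \subseteq M$, so that $\delta := \sup(M \cap \lambda)$ has cofinality $\omega_1$.

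Next I fix any $\alpha \in I$ with $\alpha > \delta$ and build a candidate branch. For each $\eta \in M \cap I \cap \delta$, system axiom~(4) gives some $\beta, \gamma < \kappa$ with $(\eta, \beta) <_R (\alpha, \gamma)$; since $\kappa$ is countable and $M \cap I \cap \delta$ is unbounded in $\delta$ of cofinality $\omega_1$, pigeonhole yields a single $\gamma^* < \kappa$ such that $E := \{\eta \in M \cap I \cap \delta : \exists \beta,\ (\eta, \beta) <_R (\alpha, \gamma^*)\}$ is unbounded in $\delta$. Set $b := \{v \in S : v <_R (\alpha, \gamma^*)\}$. Tree-likeness of $R$ makes $b$ an $R$-chain, and since $|S_\eta| \leq \kappa < \omega_1 \subseteq M$ forces $S_\eta \subseteq M$ whenever $\eta \in M \cap I$, $b \cap M$ is unbounded in $\delta$.

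The heart of the proof is verifying that $b$ is $M$-approximated. Given countable $z \in M$ (so $z \subseteq M$), let $\zeta \in M \cap \lambda$ bound the levels appearing in $z$ (necessarily $\zeta < \delta$) and pick $u \in b \cap M$ of level $\eta^* \in E$ with $\eta^* > \zeta$. For any $v \in z$, tree-likeness of $R$ combined with $u <_R (\alpha, \gamma^*)$ and the level gap yields $v <_R (\alpha, \gamma^*) \iff v <_R u$, so $z \cap b = \{v \in z : v <_R u\}$ lies in $M$, being definable from $u, z \in M$. The guessing property then produces $e \in M$ with $e \cap M = b \cap M$. A short elementarity argument finishes: since $e \cap M = b \cap M$ is an $R$-chain, the assertion ``any two elements of $e$ at distinct levels are $R$-comparable, with the lower-level one below'' holds in $M$ and hence in $V$, so $e$ itself is an $R$-chain; and if $e$ were bounded in $\lambda$, elementarity would give a bound $\gamma_0 \in M \cap \lambda < \delta$, contradicting the unboundedness of $b \cap M = e \cap M$ in $\delta$. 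Thus $e$ is a cofinal branch of $S$.

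The main obstacle I anticipate is securing a guessing model with $\cf(\sup(M \cap \lambda)) = \omega_1$, needed to make the pigeonhole step work when $\kappa = \omega$. This forces me to invoke a slight refinement of Theorem~\ref{vwthm} asserting that under PFA the set of guessing models $M$ with $\omega_1 \subseteq M$ is stationary; this is implicit in the Viale--Weiss framework, but if only the bare theorem as quoted were available, extra work would be required to handle the countable-cofinality case.
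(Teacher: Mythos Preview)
Your approach is essentially the paper's: pick a guessing model $M$ containing $S$, let $\delta = \sup(M \cap \lambda)$, show that the branch below a fixed node at (or just above) level $\delta$ is $M$-approximated, and guess it by elementarity. The one substantive divergence is exactly the obstacle you flag at the end, and here the paper does something cleaner than invoking a refinement of Theorem~\ref{vwthm}: the guessing property \emph{alone} forces $\cf(\delta) > \omega$. If $A \subseteq M \cap \lambda$ were a countable set cofinal in $\delta$, then for any countable $z \in M$ elementarity gives that $z \cap \lambda$ is bounded below $\delta$, so $z \cap A$ is finite and hence in $M$; thus $A$ is $M$-approximated, some $B \in M$ satisfies $B \cap M = A$, and a bijection $f:|B| \to B$ in $M$ together with $\omega + 1 \subseteq M$ yields $\delta = \sup(f``\omega) \in M$, a contradiction. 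With this in hand your pigeonhole and approximation arguments go through verbatim, and the extra hypothesis $\omega_1 \subseteq M$ is unnecessary. (Note also that $\omega_1 \subseteq M$ by itself does not obviously give $\cf(\delta) = \omega_1$: the order type of $M \cap \lambda$ is merely some ordinal of size $\omega_1$, which could have countable cofinality; you would still need the guessing argument above.)
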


\begin{proof}
	Suppose $\lambda \geq \omega_2$ is a regular cardinal and $S = \langle I \times \omega, \mathcal{R} \rangle$ 
  is a $\lambda$-system, with $|\mathcal{R}| \leq \omega$. As before, we may assume $I = \lambda$ and $\mathcal{R} = \{R\}$. 
  We must produce a cofinal branch through $S$.

  Let $\theta$ be a sufficiently large regular cardinal. By Theorem \ref{vwthm}, $\mathcal{G}_{\omega_2}(H(\theta))$ is 
  stationary in $\mathcal{P}_{\omega_2}(H(\theta))$, so we may find $M \in \mathcal{G}_{\omega_2}(H(\theta))$ such that $\lambda, S \in M$. 
  Let $\delta = \sup(M \cap \lambda)$.

	\begin{claim}
		$\cf(\delta) > \omega$.
	\end{claim}

	\begin{proof}
		Suppose for sake of contradiction that $\cf(\delta) = \omega$. Let $A = \{\alpha_n \mid n < \omega \}$ be such that 
    $A \subseteq M$, $A$ is cofinal in $\delta$, and, for all $n < \omega$, $\alpha_n < \alpha_{n+1}$. In particular, 
    $A \subseteq \lambda \in M$, so we are in the scope of item (1) of Definition \ref{guessing_def}. Let $z \in M$ 
    be a countable set. Then $M \models ``z \cap \lambda$ is bounded below $\lambda,"$ so $z \cap \delta$ is bounded below $\delta$. 
    This implies that $z \cap A$ is a finite set and hence a member of $M$. Thus, $A$ is $M$-approximated, so, since $M$ 
    is a guessing model, there is $B \in M$ such that $B \cap M = A \cap M = A$. Fix, in $M$, a bijection 
    $f:|B| \rightarrow B$. Since $M \prec H(\theta)$, we must have $\omega+1 \subseteq M$. Therefore, 
    $f``\omega \in M$. In particular, $\sup(f``\omega) \in M$. However, as $f``\omega$ is an infinite subset of $A$, 
    we have $\sup(f``\omega) = \delta$, contradicting the fact that $\delta \not\in M$.
  \end{proof}

	For each $n < \omega$, let $d_n = \{u \in S \cap M \mid u <_R (\delta, n)\}$. $d_n$ 
  is then a branch of $S$ through $R$. For every $\alpha \in M \cap \delta$, by clause (4) of Definition \ref{system_def}, there is $n < \omega$
  such that $d_n \cap S_\alpha \not= \emptyset$. Thus, since $\cf(\delta) > \omega$, there is $n^* < \omega$ 
  such that $d_{n^*} \cap S_\alpha \not= \emptyset$ for cofinally many $\alpha \in M \cap \delta$. 
  Fix such an $n^*$, and let $d = d_{n^*}$.

	\begin{claim}
		$d$ is $M$-approximated.
	\end{claim}

	\begin{proof}
		Note first that $d \subseteq \lambda \times \omega \in M$, so we are indeed in the scope of item (1) of Definition 
    \ref{guessing_def}. Suppose $z \in M$ is countable. Then $a_z := \{\alpha \mid d \cap z \cap S_\alpha \neq \emptyset \}$ is bounded 
    below $\delta$. Let $\beta \in M \cap \delta$ be such that $a_z \subseteq \beta$ and $d \cap S_\beta \not= \emptyset$. 
    Let $d \cap S_\beta = \{v\}$. Note that, since $\beta \in M$ and $v$ is of the form $(\beta,n)$ for some $n < \omega$, 
    we have $v \in M$. Then $z \cap d = \{u \in S \cap z \mid u <_R v \}$. Everything used to define this 
    set is in $M$, so $z \cap d \in M$. 
	\end{proof}

	Since $M$ is a guessing model and $d$ is $M$-approximated, there is $b \in M$ such that $b \cap M = d \cap M$. But then $M \models ``b$ is a cofinal branch through $S,"$ so, by elementarity, $b$ is in fact a cofinal branch through $S$.
\end{proof}

The following result shows that Theorem \ref{pfathm} is sharp.

\begin{theorem}
	PFA does not imply $NSP(\omega_2, \mu^+)$ for any $\mu \geq \omega_2$.
\end{theorem}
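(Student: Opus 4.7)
The plan is to begin with a model $V$ satisfying PFA, obtained in the standard way by iterating countable-support proper forcing over a model with a supercompact cardinal, and then force over $V$ with the Baumgartner-style poset $\bb{B}(\mu, \omega_2)$ introduced in Section~\ref{derivedSect}. Recall that $\bb{B}(\mu, \omega_2)$ is $\omega_2$-directed closed and its generic produces a $\square^{\geq \omega_2}_\mu$-sequence, so the goal is simply to ensure that PFA survives this forcing.

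The crucial step is the preservation of PFA in the generic extension $V[G]$. For this I would appeal to the theorem of K\"onig and Yoshinobu to the effect that PFA is preserved under $\omega_2$-directed-closed forcing. Granted this black-box preservation result, $V[G]$ is a model of PFA together with $\square^{\geq \omega_2}_\mu$, since a $\square^{\geq \omega_2}_\mu$-sequence is added by the generic and its defining properties are witnessed by closed cofinal subsets that cannot be disrupted by further passage to extensions.

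Having obtained $V[G] \models \mathrm{PFA} + \square^{\geq \omega_2}_\mu$, I would conclude by applying Proposition~\ref{baumgartnersquareprop} with $\kappa = \omega_1$: this yields in $V[G]$ a $\mu^+$-system of width $\omega_1$ with no cofinal branch. Since $\mu \geq \omega_2$, we have $\omega_1^+ = \omega_2 \leq \mu < \mu^+$, so such a system is narrow, and its width is strictly less than $\omega_2$. It therefore directly witnesses the failure of $NSP(\omega_2, \mu^+)$ in a model of PFA, which is what was required.

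The principal obstacle is the preservation of PFA under the $\omega_2$-directed-closed poset $\bb{B}(\mu, \omega_2)$; everything else is a direct assembly of results already in the paper. In the event that one prefers to avoid citing the K\"onig-Yoshinobu theorem as a black box, an alternative would be to interleave the addition of the $\square^{\geq \omega_2}_\mu$-sequence with the proper iteration that forces PFA, arguing that the countable-support iteration of proper forcings does not add a thread to the generically added sequence; this is plausible because proper forcing does not affect cofinalities at or above $\omega_2$ in the relevant ways, but making the preservation argument precise along these lines is technically more demanding than invoking the general preservation theorem.
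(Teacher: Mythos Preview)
Your proposal is correct, but it takes a genuinely different route from the paper. You start with a model of PFA and force $\bb{B}(\mu,\omega_2)$ over it, relying on the K\"onig--Yoshinobu theorem to preserve PFA. The paper instead reverses the order: it begins with an indestructibly supercompact $\kappa$, first forces $\bb{B}(\mu,\kappa)$ to add a $\square^{\geq\kappa}_\mu$-sequence (so $\kappa$ remains supercompact), and only then forces PFA via the usual iteration, which collapses $\kappa$ to $\omega_2$ but preserves cofinalities and cardinalities $\geq\kappa$, so the square sequence survives as a $\square^{\geq\omega_2}_\mu$-sequence. Your approach is shorter and more direct provided one accepts the K\"onig--Yoshinobu preservation theorem as a black box; the paper's approach is self-contained in that it avoids this external dependence, needing only the elementary observation that a $\square^{\geq\kappa}_\mu$-sequence persists under extensions preserving cofinalities $\geq\kappa$. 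Amusingly, the ``alternative'' you sketch at the end---adding the square sequence before the PFA iteration---is essentially what the paper does, and it is not as technically demanding as you suggest.
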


\begin{proof}
	Suppose $\kappa$ is supercompact, and let $\mu \geq \kappa$. Assume that the supercompactness of $\kappa$ 
  is indestructible under $\kappa$-directed closed forcing. Let $\bb{B} = \bb{B}(\mu, \kappa)$ be the $\kappa$-directed 
  closed forcing to add a $\square^{\geq \kappa}_\mu$-sequence. By examining the definition of $\square^{\geq \kappa}_\mu$, 
  it is easily seen that a $\square^{\geq \kappa}_\mu$-sequence remains a $\square^{\geq \kappa}_\mu$-sequence in 
  any further extension which preserves cofinalities and cardinalities $\geq \kappa$. In $V^{\bb{B}}$, $\kappa$ remains supercompact, 
  so there is a poset $\bb{P}$ such that $\bb{P}$ preserves all cofinalities and cardinalities $\geq \kappa$ and $\Vdash_{\bb{P}}``\mathrm{PFA}$ and 
  $\kappa = \aleph_2"$. Then, in $V^{\bb{B}*\dot{\bb{P}}}$, PFA holds, but also $\square^{\geq \kappa}_\mu (= \square^{\geq \aleph_2}_\mu)$ holds, 
  which, by Proposition \ref{baumgartnersquareprop}, implies the existence of a $\mu^+$-system of width $\aleph_1$ 
  with no cofinal branch and hence the failure of $NSP(\omega_2, \mu^+)$. 
\end{proof}

\section{Open questions} \label{questionSect}

In the final section we collect a few as-yet-unanswered questions.

\begin{question} \label{tpnotnsp}
	Is it consistent that there is a singular cardinal $\mu$ such that the tree property holds at $\mu^+$ but $NSP(\mu^+)$ fails?
\end{question}

In all known models for the tree property at the successor of a singular cardinal, the narrow system property holds and is in fact a key component in the verification of the tree property, so a positive answer to Question \ref{tpnotnsp} would seem to require some new ideas.

\begin{question}
	Suppose $\mu$ is a singular cardinal and $\square^*_\mu$ holds. Must there be a $\mu^+$-tree that does not admit a narrow system?
\end{question}

First note that a cofinal branch through a $\mu^+$-tree is itself a narrow system (of width $1$), so a $\mu^+$-tree that does not admit 
a narrow system must be a $\mu^+$-Aronszajn tree. Also, as mentioned before, in models in which $\square^*_\mu$ and $NSP(\mu^+)$ both hold, 
there are $\mu^+$-Aronszajn trees and all such trees do not admit narrow systems. Finally, by a result of Magidor and Shelah, if $\cf(\mu) < \kappa < \mu$ and $\kappa$ 
is strongly compact, then every $\mu^+$-tree admits a narrow system (this is essentially the content of ``Step one'' of the proof of Theorem 
3.1 of \cite{magidorshelah}).

Consideration of the following questions about the tree property led to the results in this paper. They remain unanswered.

\begin{question} \label{tprefl}
	Is it consistent that the tree property holds at $\aleph_{\omega + 1}$ and every stationary subset of $\aleph_{\omega + 1}$ reflects?
\end{question}

Fontanella and Magidor prove in \cite{fontanella} that it is consistent that the tree property holds at $\aleph_{\omega^2+1}$ and 
every stationary subset of $\aleph_{\omega^2+1}$ reflects. In their model, $AP_{\aleph_{\omega^2}}$ fails. If $\aleph_\omega$ is 
strong limit and every stationary subset of $\aleph_{\omega + 1}$ reflects, then $AP_{\aleph_\omega}$ holds (this 
is due to Shelah; for a proof, see Corollary 3.41 of \cite{eisworth}). 
Thus, the following may be relevant in answering Question \ref{tprefl}.

\begin{question}
	Is it consistent that there is a singular cardinal $\mu$ such that $AP_\mu$ and the tree property at $\mu^+$ hold simultaneously?
\end{question}

We do not even know the situation in the following seemingly simple model.

\begin{question}
	Suppose $\langle \kappa_n \mid n < \omega \rangle$ is an increasing sequence of supercompact cardinals and $\mu = \sup(\{\kappa_n \mid n < \omega\})$. Let $\bb{P}$ be the forcing poset to shoot a club in $\mu^+$ through the set of approachable points. In $V^{\bb{P}}$, does the tree property hold at $\mu^+$?
\end{question}

\bibliography{systems}
\bibliographystyle{amsplain}

\end{document}